\newtheorem{theorem}{Theorem}[section]
\newtheorem{proposition}[theorem]{Proposition}
\newtheorem{lemma}[theorem]{Lemma}
\newtheorem{corollary}[theorem]{Corollary}
\newtheorem{observation}[theorem]{Observation}
\newtheorem{conjecture}[theorem]{Conjecture}
\newcommand{\secref}[1]{Section~\ref{#1}\xspace}
\newcommand{\thmref}[1]{Theorem~\ref{#1}\xspace}
\newcommand{\lemref}[1]{Lemma~\ref{#1}\xspace}
\newcommand{\propref}[1]{Proposition~\ref{#1}\xspace}
\newcommand{\corref}[1]{Corollary~\ref{#1}\xspace}
\newcommand{\obsref}[1]{Observation~\ref{#1}\xspace}
\newcommand{\figref}[1]{Figure~\ref{#1}\xspace}
\newcommand{\conjref}[1]{Conjecture~\ref{#1}\xspace}
\newif\ifnotescoauthor
\newcommand{\comment}[2]{\ifnotescoauthor \begin{framed} $\blacktriangleright$
{\sf #1} $\blacktriangleleft$ \ifx&#2&\else\medskip\hrule\medskip
#2\fi\end{framed}\fi}
\newcommand{\R}{{\mathbb{R}}}
\newcommand{\OBJ}{{\textsc{OBJ}_q}}
\newcommand{\OBJq}[1]{{\textsc{OBJ}_{#1}}}
\newcommand{\V}{{\textsc{V}_q}}
\newcommand{\Vq}[1]{{\textsc{V}_{#1}}}
\newcommand{\E}{{\textsc{E}_q}}
\newcommand{\Eq}[1]{{\textsc{E}_{#1}}}
\newcommand{\FEAS}{{\textsc{FEAS}_q}}
\newcommand{\FEASq}[1]{{\textsc{FEAS}_{#1}}}
\newcommand{\OPT}{{\textsc{OPT}_q}}
\newcommand{\OPTq}[1]{{\textsc{OPT}_{#1}}}
\newcommand{\OPTnone}{{\textsc{OPT}}}
\newcommand{\valpha}{{\boldsymbol \alpha}}
\newcommand{\vbeta}{{\boldsymbol \beta}}
\newcommand{\vgamma}{{\boldsymbol \gamma}}
\newcommand{\vA}{{\mathbf{A}}}
\newcommand{\vx}{{\mathbf{x}}}
\newcommand{\OPTprob}{{\textsc{Main Optimization Problem}}}
\newcommand{\SUPP}{{\textsc{SUPP}_q}}
\newcommand{\cei}[1]{{\left\lceil{#1}\right\rceil}}
\newcommand{\flo}[1]{{\left\lfloor{#1}\right\rfloor}}
\newcommand{\Step}[1][Step]{\underline{\emph{#1:}}\enskip}
\newcommand{\Text}[1]{\enskip\text{#1}\enskip}
\newcommand{\lt}{\left}
\newcommand{\rt}{\right}
\newcommand{\eps}{\varepsilon}
\newcommand{\mc}[1]{\mathcal #1}
\title{Maximizing proper colorings on graphs}
\author{Jie Ma
\thanks{Department of Mathematical Sciences,
Carnegie Mellon University, Pittsburgh, PA 15213, USA.
Email: \texttt{jiemath@andrew.cmu.edu}.} \and
Humberto Naves
\thanks{Institute for Mathematics and its Applications,
University of Minnesota, Minneapolis, MN 55455, USA.
Email: \texttt{hnaves@ima.umn.edu}.
This research was supported in part by the Institute for
Mathematics and its Applications with funds provided by
the National Science Foundation.}}
\date{}
\begin{document}
\maketitle
\setcounter{page}{1}

\vspace{-2em}
\begin{abstract}

  The number of proper $q$-colorings of a graph $G$, denoted by $P_G(q)$,
  is an important graph parameter that plays fundamental role in graph theory,
  computational complexity theory and other related fields.
  We study an old problem of Linial and Wilf to find the graphs with
  $n$ vertices and $m$ edges which maximize this parameter.
  This problem has attracted much research interest in recent years,
  however little is known for general $m,n,q$. Using analytic and
  combinatorial methods, we characterize the asymptotic structure of extremal
  graphs for fixed edge density and $q$. Moreover, we disprove a
  conjecture of Lazebnik, which states that the Tur\'{a}n graph $T_s(n)$
  has more $q$-colorings than any other graph with the same number of
  vertices and edges. Indeed, we show that there are infinite many
  counterexamples in the range $q = O\lt({s^2}/{\log s}\rt)$. On the other hand,
  when $q$ is larger than some constant times ${s^2}/{\log s}$,
  we confirm that the Tur\'{a}n graph $T_s(n)$ asymptotically is the
  extremal graph achieving the maximum number of $q$-colorings. Furthermore,
  other (new and old) results on various instances of
  the Linial-Wilf problem are also established.

\end{abstract}


\section{Introduction}
\label{sec:intro}
A \emph{proper $q$-coloring} of a graph $G$ is an assignment mapping every
vertex to one of the $q$ colors in such a way that no two adjacent vertices
receive the same color. Let $P_G(q)$ denote the number of proper $q$-colorings
in a graph $G$. Introduced by Birkhoff \cite{Birk} in 1912, who proved that
$P_G(q)$ is always a polynomial in $q$, this important graph parameter,
as now commonly referred to as \emph{chromatic polynomial} of $G$,
has been extensively investigated over the past century.
As it is already NP-hard to determine whether this number $P_G(q)$ is nonzero
(even for $q=3$ and planar graph $G$), the focus of substantial research has
been to obtain good bounds for $P_G(q)$ over various families of graphs.

The original motive for Birkhoff~\cite{Birk} to consider the chromatic
polynomial was the famous four-color conjecture (now a theorem), which
equivalently asserts that the minimum $P_G(4)$ over all planar graphs
is at least one. For every $q\ge 5$, it was obtained by Birkhoff in
\cite{Birk30} that $P_G(q)\ge q(q-1)(q-2)(q-3)^{n-3}$
for every planar graph $G$ with $n$ vertices, which is also sharp.
Motivated from computational complexity, Linial~\cite{Lin} arrived at
the problem of minimizing the number of \emph{acyclic orientations} of
graph $G$, which equals $|P_G(-1)|$ by a result of Stanley~\cite{Stan},
over the family $\mc{F}_{n,m}$ of graphs with $n$ vertices and $m$ edges.
He gave a surprising answer that for any $n,m$, there exists a universal graph
minimizing $|P_G(q)|$ over the family $\mc{F}_{n,m}$ for \emph{every}
integer $q$. This graph is obtained from a clique $K_k$ by adding  $n-k-1$
isolated vertices and an extra vertex adjacent to $l$ vertices of the
clique $K_k$, where $k>l\ge 0$ are the unique integers satisfying
$\binom{k}{2}+l=m$.

Linial~\cite{Lin} then asked for the counterpart of his result, that is,
to maximize $|P_G(q)|$ over all graphs with $n$ vertices and $m$ edges
for integers $q$.
Wilf (see~\cite{BW,Wilf}) independently raised the same maximization problem
from a different point of view, the \emph{backtracking} algorithm for finding
a proper $q$-coloring.
Since then, this problem has been the subject of extensive research,
and many upper bounds on $P_G(q)$ over the family $\mc{F}_{n,m}$ have
been obtained (see, for instance, \cite{Byer,Laz89,Laz90,Liu,Nor}).
The case $q=2$ (for all $n,m$) was solved by Lazebnik in \cite{Laz89}
completely. In the same paper, Lazebnik conjectured that in the range
$m\le n^2/4$, the graphs with $n$ vertices and $m$ edges maximizing the
number of $3$-colorings must be complete bipartite graphs $K_{a,b}$ minus
the edges of some star, plus isolated vertices. This was confirmed in a
breakthrough paper \cite{LPS} of Loh, Pikhurko and Sudakov, who further
determined the asymptotic values of $a,b$. For $q\ge 4$, they also showed
that the same graphs achieve the maximum number of $q$-colorings, for all
sufficiently large $m<\kappa_q n^2$ where $\kappa_q \approx 1/(q\log q)$.
In fact, Loh \emph{et al.}~\cite{LPS} provided a general approach
which enables to find the asymptotic solution of the Linial-Wilf problem
by reducing it to a quadratically constrained linear problem, which we shall
introduce in \secref{sec:optimization}.
Despite the efforts by various researchers, still very little was known
for general $m,n,q$. ``Perhaps part of the difficulty for general $m,n,q$
stems from the fact that the maximal graphs are substantially more complicated
than the minimal graphs that Linial found'' (quoted from~\cite{LPS}).

The first contribution of our paper is a structural theorem that allows us
to substantially simplify the quadratically constrained linear problem for
general instances. Here we state it in a graph theoretic fashion and direct
readers to \thmref{thm:structure} for the specific statement linking to the
optimization problem. This structural theorem asserts that extremal graphs
must be asymptotically ``close'' to the ones in some family $\mc{G}_k$,
where $k>1$ is an integer only desponding on the edge density of graphs.
To be precise, for fixed $q$, the family $\mc{G}_k$ consists of
complete multipartite graphs with at least $k$ and at most $q$ parts as well as
graphs obtained from a complete $k$-partite graph by adding some additional
vertices each of which adjacent to the vertices of all but two fixed parts.
To measure the ``closeness", we define the \emph{edit distance} of two graphs
with the same number of vertices to be the minimum number of edges that need
to be added or deleted from one graph to make it isomorphic to the other.
We say $G$ is {\it $d$-close} to $H$, if the edit distance between graphs
$G$ and $H$ is at most $d$.

\begin{theorem}

  \label{thm:structure-graphs}
  For any real $s>1$, the following holds for all sufficiently
  large $n$. Let $G$ be an $n$-vertex graph with $\frac{s-1}{2s}n^2+o(n^2)$
  edges which maximizes the number of $q$-colorings over graphs with the
  same number of vertices and edges. Then there exists an $n$-vertex graph
  in $\mc{G}_{\cei{s}}$ which is $o(n^2)$-close to $G$.

\end{theorem}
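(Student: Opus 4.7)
The plan is to deduce this theorem as the graph-theoretic consequence of the companion optimization result \thmref{thm:structure}. Following the framework developed by Loh, Pikhurko and Sudakov, for every $n$-vertex graph $G$ there is an empirical type vector $\valpha(G) = (\alpha_S)$ indexed by non-empty subsets $S \subseteq [q]$, encoding the fraction of vertices in each piece of an optimal ordered color-class partition of $G$. Up to an additive $o(n^2)$, both $\log P_G(q)$ and the number of edges of $G$ are determined by $\valpha(G)$, so maximizing $P_G(q)$ over $\mc{F}_{n,m}$ with $m = \frac{s-1}{2s}n^2 + o(n^2)$ asymptotically reduces to a quadratically constrained linear program on the standard simplex.

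First I would apply \thmref{thm:structure}, which classifies the optima $\valpha^*$ of this program: their supports are exactly the type sets of the configurations defining $\mc{G}_{\cei{s}}$, namely complete $k'$-partite supports with $\cei{s}\le k'\le q$, or $\cei{s}$-partite supports enlarged by types that omit exactly two fixed parts. The blow-up graph $H(\valpha^*)$ obtained by placing the appropriate fraction of vertices into each type and joining vertices of distinct types according to the multipartite pattern belongs to $\mc{G}_{\cei{s}}$ by construction.

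Next, a stability argument forces $\valpha(G)$ to be $\ell_1$-close to some maximizer $\valpha^*$: otherwise $H(\valpha^*)$ would attain a strictly larger asymptotic value of $\log P(q)$ than $G$, contradicting the maximality of $G$. Finally, I translate $\ell_1$-closeness of type vectors into edit-distance closeness of graphs. Rounding $\valpha^*$ to integer part sizes matching $G$ up to $o(n)$ produces a graph $H \in \mc{G}_{\cei{s}}$ on $n$ vertices; the discrepancy between $G$ and $H$ is then $o(n^2)$, since $o(n)$ misclassified vertices and the within-part / cross-part adjacency discrepancies between $G$ and its induced blow-up each contribute at most $o(n^2)$ edges.

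The main obstacle is upgrading \thmref{thm:structure} from an exact characterization of optima to a quantitative stability statement tolerating $o(1)$ slack in both the objective and the edge-density constraint. This step requires care because the optima of the program need not be isolated in the simplex (symmetries across color labels, for instance, produce orbits of optima), and because boundary strata where the support drops must be handled separately; a \L ojasiewicz-type inequality on the semialgebraic feasible set, combined with compactness, should yield the required bound. A secondary technical step is the integer rounding from fractional part sizes to an $n$-vertex graph with exactly $m$ edges, which is handled by a standard edge-swapping argument affecting only $o(n^2)$ edges.
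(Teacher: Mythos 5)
Your overall route---classify the optima of the optimization problem via \thmref{thm:structure}, then transfer back to graphs---is the same as the paper's, but there is a genuine gap in the transfer step. The assertion that an extremal graph $G$ carries an ``empirical type vector'' $\valpha(G)$ such that $G$ is within $o(n^2)$ edit distance of the blow-up of $\valpha(G)$ is not part of the reduction to the optimization problem; it is exactly the graph-level stability theorem of Loh, Pikhurko and Sudakov (\thmref{thm:stability-LPS}), which the paper imports as a black box. Your argument that ``otherwise $H(\valpha^*)$ would attain a strictly larger asymptotic value of $\log P(q)$ than $G$'' only shows that any such type vector would have objective value close to $\OPT(s)$ (this much follows from \thmref{thm:pob}); it does not show that a maximizing graph structurally resembles any blow-up $G_{\valpha}(n)$. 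Ruling out maximizers that are far in edit distance from every blow-up yet still have close to $e^{\OPT(s)n}$ colorings is precisely the content of \thmref{thm:stability-LPS}, and it does not follow from maximality of $G$ plus properties of the program; so you must either cite that theorem explicitly (as the paper does) or supply its proof, and your Łojasiewicz-type inequality cannot substitute for it, since it lives entirely inside the feasible set of vectors and says nothing about graphs whose structure is not of blow-up type.

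Once \thmref{thm:stability-LPS} is invoked, the quantitative machinery you worry about is unnecessary: the residual issues---the $o(1)$ slack in the edge density, the fact that the optimum set is not a single point, and the possible drop of $s'$ below $s$---are handled by a soft compactness argument, exactly as in the paper's appendix proof of \corref{cor:stability-LPS}: take a sequence of alleged counterexamples, apply \thmref{thm:stability-LPS} with parameter $s+1$ and tolerance $\frac{\eps}{2t}$, pass to a convergent subsequence of the resulting vectors $\valpha_t$, and use continuity of $\V$, $\E$, $\OBJ$ and $\OPT$ to conclude the limit solves $\OPT(s)$. With \corref{cor:stability-LPS} in hand, the theorem is immediate from \thmref{thm:structure}, because $G_{\valpha}(n)$ for any $\valpha$ with support graph in $\cup_{\cei{s}\le k\le q}\mc{P}_k$ or in $\mc{Q}_{\cei{s}}$ belongs to $\mc{G}_{\cei{s}}$ by construction. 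Two smaller points: $\log P_G(q)$ is of order $n$, so ``determined up to an additive $o(n^2)$'' is vacuous---the relevant accuracy there is $o(n)$, with $o(n^2)$ reserved for the edge count; and no edge-swapping to hit exactly $m$ edges is needed, since the statement only asks for some $n$-vertex member of $\mc{G}_{\cei{s}}$ that is $o(n^2)$-close to $G$.
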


\noindent We point out that the proofs of the following results will heavily
rely on the structure given by \thmref{thm:structure}, and we expect that
it will also provide useful insights to other unsolved ranges of the
Linial-Wilf problem.

Let $T_s(n)$ denote the complete $s$-partite $n$-vertex graph with
nearly-equal parts, i.e., the \emph{balanced Tur\'{a}n graph} with $n$
vertices and $s$ parts. Unlike the complicated situation in the general case,
Lazebnik conjectured (see~\cite{LPW}) that the Tur\'{a}n graphs $T_s(n)$ are
always extremal whenever $q\ge s$. More specifically,
\begin{conjecture}[Lazebnik]

  \label{conj:Lazebnik}
  For integers $q\ge s> 1$ and $n$ divisible by $s$, the Tur\'{a}n
  graph $T_s(n)$ has more proper $q$-colorings than any other graph with the
  same number of vertices and edges.

\end{conjecture}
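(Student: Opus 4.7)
The plan is to invoke \thmref{thm:structure-graphs} to reduce the conjecture to a finite comparison within the family $\mc{G}_s$, and then finish via a stability argument. Concretely, let $G$ be any extremal $n$-vertex graph with $e(G) = e(T_s(n)) = \frac{(s-1)n^2}{2s}$; since $s>1$ is an integer, $\cei{s}=s$ and \thmref{thm:structure-graphs} provides an $n$-vertex graph $H \in \mc{G}_s$ with edit distance $o(n^2)$ to $G$. It then suffices to establish, for all sufficiently large $n$, the two sub-claims: (a) $T_s(n)$ is the unique maximizer of $P_{(\cdot)}(q)$ within $\mc{G}_s$ subject to the edge constraint, and (b) any $\Omega(n^2)$-perturbation of a member of $\mc{G}_s$ that preserves the edge count strictly decreases the chromatic polynomial. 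The finitely many remaining small $n$ are cleared by a direct check.

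For (a), recall that $\mc{G}_s$ has two kinds of members. A complete $k$-partite graph with $s \le k \le q$ parts of sizes $n_1,\dots,n_k$ has chromatic polynomial
\[
P(q) \;=\; \sum_{c_1,\dots,c_k \ge 1} \frac{q!}{(q-\sum_i c_i)!}\, \prod_{i=1}^k S(n_i,c_i),
\]
so we must optimize over integer part sizes satisfying $\sum n_i = n$ and $\sum n_i^2 = n^2/s$ (the edge constraint). For $k=s$ parts, concavity-type inequalities in $(n_1,\dots,n_s)$ drive the sizes to equality, selecting $T_s(n)$; for $k>s$ parts, one must argue that the finer ordered partitions of $[q]$ and the forced skew in the $n_i$'s together lose to the Tur\'an configuration. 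The ``defect'' members of $\mc{G}_s$ (a complete $k$-partite base with extra vertices each joined to all but two parts) are handled analogously by splitting the chromatic polynomial over the defect vertices. For (b), I would adapt the stability machinery of Loh--Pikhurko--Sudakov \cite{LPS}: an edit-distance perturbation of size $\varepsilon n^2$ multiplies $P(q)$ by at most $1-\Omega(\varepsilon)$, which comfortably absorbs the $o(n^2)$ slack from \thmref{thm:structure-graphs}.

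The decisive obstacle is step (a) when $q$ is close to $s$. Here the gap between $T_s(n)$ and its competitors in $\mc{G}_s$ (complete $(s+j)$-partite graphs with carefully unbalanced parts, or defect constructions over a denser base) becomes razor-thin, and in fact the paper itself exhibits infinitely many counterexamples to \conjref{conj:Lazebnik} in the regime $q = O(s^2/\log s)$, precisely by engineering such configurations. Consequently, no continuation of this plan can prove the conjecture as stated: any viable argument for (a) must degrade around the $s^2/\log s$ threshold, so the most this outline can deliver is \conjref{conj:Lazebnik} in the restricted range $q \gtrsim s^2/\log s$, matching the positive asymptotic theorem established elsewhere in the paper. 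Proving the conjecture in full generality would require a genuinely new mechanism that distinguishes $T_s(n)$ from its near-competitors in $\mc{G}_s$ for small $q$, and at present no such mechanism is known; indeed the existence of such a mechanism is ruled out in the regime exhibited by the counterexamples.
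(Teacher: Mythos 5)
You have not produced (and could not have produced) a proof of this statement, and you are right about why: \conjref{conj:Lazebnik} is a conjecture that this paper \emph{disproves}. There is no ``paper's own proof'' to compare against; the paper's contribution is \thmref{thm:counterexample} and \thmref{thm:q=s+C}~(ii), which exhibit infinitely many pairs $(s,q)$ --- roughly $20s\le q\le \frac{s^2}{200\log s}$, and also $s+3\le q\le 2s-7$ --- for which the Tur\'an graph is beaten, together with positive results that are only asymptotic: \thmref{thm:main} (for $s$ large and $q\ge 100s^2/\log s$, extremal graphs are $o(n^2)$-close to $T_s(n)$) and \thmref{thm:q=s+C}~(i). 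Your diagnosis of where step~(a) breaks is exactly the paper's mechanism: in Theorem~\ref{thm:counterex-str} one writes $q=st+r$ and builds a feasible vector with $s+1$ classes of sizes $t+1,\dots,t+1,t,\dots,t,1$ and suitably skewed weights $\alpha_i$ whose objective value exceeds that of the $s$-balanced vector; \thmref{thm:pob}~(ii) then converts this into a graph with the right number of edges and more $q$-colorings than $T_s(n)$. So your final verdict agrees with the paper, and the competitor family you name (unbalanced complete $(s+1)$-partite configurations) is the one actually used.

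Two cautions on the details of your outline. First, even in the regime $q\gtrsim s^2/\log s$ your plan would not deliver the conjecture ``as stated'': the reduction through \thmref{thm:structure-graphs} and \corref{cor:stability-LPS} works at the exponential scale $e^{(\OPT(s)\pm\eps)n}$, so it only ever yields $o(n^2)$-closeness to $T_s(n)$, which is precisely why \thmref{thm:main} is asymptotic (and requires $s$ large); the exact statement remains open there, and the concluding section explains that pinning down the exact extremal graphs would require a universal bound on $P_G(q)$ for sparse graphs that is not currently known. Second, your step~(b) claim that an $\eps n^2$-edit perturbation multiplies $P_G(q)$ by at most $1-\Omega(\eps)$ is not something the paper provides or uses, and it is not obviously true at the precision you would need; the stability transfer in the paper goes through \thmref{thm:stability-LPS} and the continuity of $\OPT$, again only at the $e^{o(n)}$ level. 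Neither caution changes your bottom line, which is correct: the statement is false in general, and the honest output of your plan is an asymptotic result in the large-$q$ range, which is what the paper proves.
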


\noindent The case when $q=s$ immediately follows from the well-known
Tur\'an's theorem. Lazebnik~\cite{Laz91} confirmed this when $q=\Omega(n^6)$,
and proved with Pikhurko and Woldar~\cite{LPW} that $T_2(2n)$ is extremal when
$q=3$ and is asymptotically extremal when $q=4$.
Loh, Pikhurko and Sudakov~\cite{LPS} showed that when $q=s+1\ge 4$,
$T_s(n)$ is the unique extremal graph for sufficiently large $n$,
which was improved to all $n$ by Lazebnik and Tofts in \cite{LazTofts}.
And Norine~\cite{Nor} partially confirmed this conjecture for sufficiently
large $n$, provided that $s$ divides $q$.
Very recently, Tofts~\cite{Tofts} proved that when $q=4$ and $s=2$, $T_2(n)$
is the unique extremal graph for all $n$. Surprisingly, despite these positive
results, we disprove \conjref{conj:Lazebnik} by the following.
\begin{theorem}

  \label{thm:counterexample}
  For all integers $s\ge 50000$ and $q_0$ such that $20s\le q_0 \le
  \frac{s^2}{200\log s}$, there exists an integer $q$, within
  distance at most $s$ from $q_0$, such that \conjref{conj:Lazebnik} is
  false for $(s,q)$.

\end{theorem}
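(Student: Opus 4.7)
The plan is to construct, for each $q_0$ in the stated range, an $n$-vertex graph $H$ with $|E(H)|=|E(T_s(n))|$ and an integer $q\in[q_0-s,q_0+s]$ such that $P_H(q)>P_{T_s(n)}(q)$. By \thmref{thm:structure-graphs}, the candidate $H$ should lie close to the family $\mc{G}_{\cei{s}}$, which contains both complete multipartite graphs with between $s$ and $q$ parts and ``augmented'' graphs obtained from a complete $s$-partite skeleton by attaching independent vertices each adjacent to all but two fixed parts. I will identify an explicit $H$ in this family (or a controlled perturbation) and verify the desired strict inequality at a suitable $q$ near $q_0$.

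The key technical tool is the color-assignment representation of the chromatic polynomial: for a complete $k$-partite graph,
\[
P_{K_{n_1,\ldots,n_k}}(q)=\sum_{c_0+c_1+\cdots+c_k=q}\binom{q}{c_0,c_1,\ldots,c_k}\prod_{i=1}^{k}\mathrm{Surj}(n_i,c_i),
\]
with an analogous formula for augmented graphs carrying an additional factor $(c_1+c_2+c_0)^u$ coming from the $u$ extra vertices. Stirling together with the asymptotic $\mathrm{Surj}(a,c)=c^{a}\bigl(1-O(c\,e^{-a/c})\bigr)$, valid since $q=o(n)$ forces $c\ll a$ at the saddle point, reduces the sum to a Laplace-type expansion around $c_i^{*}\approx q n_i/n$. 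This yields
\[
\log P_G(q)=n\log q-(n-q)\,H(\alpha_G)+\Delta_G(q,s)+O(\log n),
\]
where $H(\alpha_G)$ is the Shannon entropy of the normalized part-size distribution and $\Delta_G$ isolates the extra-vertex contribution; for $T_s(n)$ one has $H(\alpha)=\log s$ and $\Delta=0$.

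The proof concludes by showing $\Delta_H-(n-q)\bigl(H(\alpha_H)-\log s\bigr)>0$ at an appropriate integer $q\in[q_0-s,q_0+s]$. The lower bound $q_0\ge 20s$ is used so that the per-extra-vertex contribution in $\Delta_H$, of order $\log(2q/s)\ge\log 40$, is substantial; the upper bound $q_0\le s^{2}/(200\log s)$ is used to keep the sub-leading corrections (from Stirling and from the $\mathrm{Surj}$-approximation error) small enough not to reverse the inequality; and the tolerance $|q-q_0|\le s$ absorbs integrality constraints on the part sizes and on the saddle color counts. The principal obstacle is identifying the correct candidate $H$ (in particular the number $u$ of extras and the exact sizes of the remaining parts) and verifying the quantitative balance between $\Delta_H$ and the entropy penalty uniformly across the stated range of $q_0$. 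Naive complete multipartite candidates with more than $s$ parts fail because Shannon entropy under the constraint $\sum\alpha_i^{2}=1/s$ is minimized at the Turán partition, so the augmented construction from $\mc{G}_{\cei{s}}$ is what makes the counterexample possible; the range $20s\le q_0\le s^{2}/(200\log s)$ is precisely the window in which the gain $\Delta_H$ strictly outweighs the entropy loss.
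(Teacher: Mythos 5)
There is a genuine gap, and in fact the heuristic steering your search points away from the construction that actually works. Your proposal never produces an explicit candidate $H$ nor verifies the strict inequality --- you defer exactly this (``identifying the correct candidate $H$ \ldots and verifying the quantitative balance'') to the end, but that is the entire content of \thmref{thm:counterexample}. Worse, the guiding claim that ``naive complete multipartite candidates with more than $s$ parts fail'' and that the augmented ($\mc{Q}$-type) graphs are ``what makes the counterexample possible'' is backwards. The paper's counterexample (\thmref{thm:counterex-str}) \emph{is} a complete $(s+1)$-partite graph: writing $q=st+r$, one takes $s+1$ color classes of sizes $A_1=\dots=A_{r-1}=t+1$, $A_r=\dots=A_s=t$, $A_{s+1}=1$, and vertex-class weights $\alpha_i$ that are an affine function of $\log A_i$, chosen so that $\sum_i\alpha_i=1$ and $\sum_i\alpha_i^2=\frac1s$; its objective value $Y=S_1+\sqrt{(S_2-S_1^2)/s}$ is shown to exceed the Tur\'an value $X=\frac{r}{s}\log(t+1)+\frac{s-r}{s}\log t$ precisely when $50t\log t\le r\le\min\{\frac s2,\frac32 t^2\log^2 t\}$, and \thmref{thm:counterexample} follows by choosing $t,r$. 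No $\mc{Q}$-type graph is needed, and for integer $s$ the structure results (\thmref{thm:structure} together with \propref{prop:step_5}) show optimal solutions are never of $\mc{Q}$-type, so your plan hunts in the wrong family.

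The reason your heuristic misleads you is the continuous saddle-point formula $\log P_G(q)\approx n\log q-(n-q)H(\alpha_G)$: it implicitly allocates $c_i^*\approx q\alpha_i$ colors to part $i$ as a \emph{real} number, and at that resolution the Tur\'an graph (minimum entropy subject to $\sum\alpha_i^2=\frac1s$) indeed looks unbeatable among multipartite graphs. But with $q$ fixed the color-class sizes must be integers, and the whole phenomenon in the range $q=O(s^2/\log s)$ is the resulting rounding penalty $\log\frac qs-\bigl[\frac rs\log(t+1)+\frac{s-r}{s}\log t\bigr]$ suffered by the Tur\'an partition; the $(s+1)$-part construction recovers part of this loss by correlating the weights $\alpha_i$ with $\log A_i$, exploiting the slack in the quadratic constraint created by the extra part. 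Your entropy formula erases exactly this second-order term, so the comparison you propose to carry out cannot detect the effect the theorem is about; any proof along your lines would have to work at the finer scale of the discrete objective $\sum_i\alpha_i\log A_i$ (as the paper does through $\OPT(s)$, \thmref{thm:pob} and the explicit computation in \thmref{thm:counterex-str}), and would also need the elementary but essential bookkeeping showing that every $q_0\in[20s,\,s^2/(200\log s)]$ is within $s$ of some admissible $q=st+r$.
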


On the other hand, we show that \conjref{conj:Lazebnik} asymptotically holds
for all large $s$ and $q\ge \frac{100s^2}{\log s}$.
\begin{theorem}

  \label{thm:main}
  For sufficiently large integers $s$ and $q$ such that
  $q\ge \frac{100s^2}{\log s}$, the following holds for all sufficiently large
  $n$. Every extremal graph which maximizes the number of proper $q$-colorings
  over graphs with $n$ vertices and $\frac{s-1}{2s}n^2+o(n^2)$
  edges is $o(n^2)$-close to the Tur\'an graph $T_s(n)$.

\end{theorem}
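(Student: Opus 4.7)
The plan is to combine Theorem~\ref{thm:structure-graphs} with a finite-dimensional analysis of the number of $q$-colorings inside the family $\mc{G}_s$. By Theorem~\ref{thm:structure-graphs}, every extremal graph $G$ is $o(n^2)$-close to some $H\in\mc{G}_s$ on the same vertex set. Hence, by the triangle inequality for edit distance, it suffices to show that any $H\in\mc{G}_s$ with $\frac{s-1}{2s}n^2+o(n^2)$ edges whose coloring count is within the slack incurred by the $o(n^2)$ edits of the optimum must itself be $o(n^2)$-close to $T_s(n)$.

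To this end, I would parametrize $\mc{G}_s$: a member is either a complete $k$-partite graph $K_{n_1,\dots,n_k}$ with $s\le k\le q$, or such a graph augmented with some additional vertices, each adjacent to the vertices of all but two fixed parts. Writing $\alpha_i=n_i/n$, the prescribed edge density is equivalent to $\sum_i\alpha_i^2=\tfrac{1}{s}+o(1)$. For complete multipartite graphs one has the exact identity
\[
P_{K_{n_1,\dots,n_k}}(q)=\sum_{j_1,\dots,j_k\ge 1} q^{\underline{j_1+\cdots+j_k}}\prod_{i=1}^{k} S(n_i,j_i),
\]
where $S(\cdot,\cdot)$ denotes the Stirling number of the second kind. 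A Stirling/saddle-point analysis of this sum then yields an asymptotic formula $\tfrac{1}{n}\log P_H(q)=\Phi(\alpha_1,\dots,\alpha_k;q)+o(1)$ for an explicit function $\Phi$, reducing the problem to maximizing $\Phi$ over a finite-dimensional region (suitably extended to account for the type-(b) extra vertices).

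One then compares $\Phi$ at the Tur\'an configuration $(k,\alpha)=(s,(1/s,\dots,1/s))$ with $\Phi$ elsewhere. The case $k=s$ is forced by Cauchy--Schwarz: $\sum\alpha_i=1$ and $\sum\alpha_i^2=1/s$ together give $\alpha_i=1/s$ for all $i$. For $k>s$, the quadratic constraint forces strictly unbalanced parts; the quantitative claim is that, provided $q\ge 100s^2/\log s$, this imbalance costs more than the extra part saves. Heuristically, introducing an additional part contributes roughly $\log q$ to $\log P_H(q)$, whereas the required imbalance incurs a loss proportional to $\log s$ per unit, and $q\ge 100s^2/\log s$ is precisely the threshold where the loss dominates. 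The augmentation by type-(b) vertices is handled in parallel: a vertex joined to all but two parts can be shown to contribute strictly less than incorporating it into a balanced Tur\'an-like part. A stability version of these estimates (near-equality implies $\alpha_i\to 1/s$) finally converts the scalar comparison into the claimed $o(n^2)$ edit-distance bound.

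The main obstacle is the quantitative comparison between the $k=s$ and $k>s$ cases: executing the saddle-point computation and tracking constants tightly enough to pin the threshold at $100s^2/\log s$. Since Theorem~\ref{thm:counterexample} shows that the threshold cannot be improved below order $s^2/\log s$, the analysis must be precise enough to separate these two regimes rather than relying on crude estimates, and careful bookkeeping of the Stirling approximations (rather than just the leading exponential order) will be essential.
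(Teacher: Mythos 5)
There is a genuine gap, in two places. First, your opening reduction is unsound: from ``$G$ is $o(n^2)$-close to some $H\in\mc{G}_{\cei{s}}$'' you cannot pass to ``$P_H(q)$ is within the slack incurred by the $o(n^2)$ edits of the optimum''. Edit distance gives no multiplicative control on the number of colorings at the accuracy needed here: adding even a single edge can destroy almost all proper colorings, so closeness to the extremal $G$ yields neither a useful lower nor upper bound on $P_H(q)$, while every comparison in this problem must be made at scale $e^{cn}$ with errors $e^{o(n)}$. The paper never compares counts of nearby graphs; instead, by \corref{cor:stability-LPS} the extremal graph is $o(n^2)$-close to $G_{\valpha}(n)$ for an \emph{exact} solution $\valpha$ of $\OPT(s)$, so \thmref{thm:main} reduces to showing that the $s$-balanced vector is the unique solution of $\OPT(s)$ (\thmref{thm:Turan_OPT}). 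Your Stirling/saddle-point analysis of $P_{K_{n_1,\dots,n_k}}(q)$ would only re-derive the objective $\sum_i\alpha_i\log|A_i|$ of that same finite-dimensional problem, i.e.\ nothing beyond what \thmref{thm:pob} already provides; if you want to run your reduction honestly you would additionally need a stability statement for near-optimal feasible vectors, which you do not supply.

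Second, the actual content of the theorem --- the quantitative comparison for $q\ge 100s^2/\log s$ --- is missing, and the heuristic you give is not the right mechanism. For integer $s$ the augmented (``type-(b)'') supports are excluded outright (\propref{prop:step_5}), so only partitions with $k\ge s$ parts remain; but since $\sum_i\alpha_i=1$ and $\sum_i\alpha_i^2=\frac1s$, any extra parts must carry tiny weight and (\propref{prop:ratio_alpha_A}, \lemref{lem:small_two}) their color classes are singletons, so an additional part does \emph{not} ``contribute roughly $\log q$''; its only effect is to let the weights of the big parts correlate with their class sizes. The paper quantifies this as a second-order variance gain, $\OBJ(\valpha)\le S_1+\frac{S_2-S_1^2}{2S_1}$ with $S_2-S_1^2\le \frac{S_1^2}{20q}$ (\lemref{lem:objective}), maximized by a balanced partition of $q-1$ (\lemref{lem:objective2}), and compares it with the loss $\approx\frac{1}{2q}$ the $(s+1)$-part configuration suffers from wasting a color, using $\OBJ\ge\log q-\log s-\frac{s^2}{2q^2}$ for the $s$-balanced vector (\propref{prop:good_approx}); comparing roughly $\frac{4s^2}{q^2\log(q/s)}$ with $\frac{1}{2q}$ is exactly what produces the $s^2/\log s$ threshold. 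One must also first prove $k\le s+1$ at all, which requires the continuous relaxation (\lemref{lem:cont_opt}, \lemref{lem:k_or_kplus1}). None of these steps appear in your sketch, and your final paragraph concedes that this comparison is the ``main obstacle''; as it stands the proposal is a plan rather than a proof.
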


\noindent The above two results together show that for fixed integer $s$,
the order of magnitude $\frac{s^2}{\log s}$ is the threshold for the
number of colors $q$ (up to a constant factor): there are many
counterexamples to \conjref{conj:Lazebnik} when $q$ is smaller than
$\frac{s^2}{200\log s}$, while the Tur\'an graph $T_s(n)$  asymptotically
becomes optimal once $q$ exceeds $\frac{100s^2}{\log s}$.
We will discuss related issues about the precise structure of extremal graphs
in \secref{sec:conclusion}.

In the next result, we consider \conjref{conj:Lazebnik} for integers $s$,
where $q$ is not far from $s$.
\begin{theorem}

   \label{thm:q=s+C}
   (i). If $s\le q\le s+2$, then for all integers $s>1$ and sufficiently
   large integers $n$, every extremal graph which maximizes the number of
   $q$-colorings over graphs with $n$ vertices and $\frac{s-1}{2s}n^2+o(n^2)$
   edges is $o(n^2)$-close to the Tur\'{a}n graph $T_s(n)$.

   (ii). If $s+3\le q\le 2s-7$, where $s\ge 10$ is an integer, then
   \conjref{conj:Lazebnik} is false for $(s,q)$.

\end{theorem}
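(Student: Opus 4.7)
The plan is to apply \thmref{thm:structure-graphs}. Since $s$ is an integer, $\lceil s\rceil=s$, so every extremal $n$-vertex graph is $o(n^2)$-close to some member of $\mc{G}_s$; it therefore suffices to analyze chromatic polynomials within this family under the edge constraint $\sum_{i<j}n_in_j=\tfrac{s-1}{2s}n^2+o(n^2)$, equivalently $\sum_i n_i^2=\tfrac{n^2}{s}+o(n^2)$. For complete multipartite graphs the main tool is the formula
\[
P_{K_{n_1,\ldots,n_k}}(q)=\sum_{\substack{c_1,\ldots,c_k\ge 1\\ c_1+\cdots+c_k\le q}}\binom{q}{c_1,\ldots,c_k,\,q-\textstyle\sum_i c_i}\prod_{i=1}^k\mathrm{Surj}(n_i,c_i),
\]
together with the per-vertex exponential rate
\[
\rho(H,q):=\lim_{n\to\infty}\frac{\log P_H(q)}{n}=\max\left\{\sum_{i=1}^k\frac{n_i}{n}\log c_i\;:\;c_i\in\{1,2,\ldots\},\;\sum_i c_i\le q\right\};
\]
augmented graphs are handled by similar accounting for the extras' color choices.

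For part (i) with $q\in\{s,s+1,s+2\}$, I enumerate the candidates in $\mc{G}_s$ and verify that any one not already $o(n^2)$-close to $T_s(n)$ has strictly smaller $\rho$. The edge constraint forces $k\ge s$ (for $k<s$ one would have $\sum n_i^2\ge n^2/k>n^2/s$), and $q$-colorability forces $k\le q$, so $k\in\{s,\ldots,q\}$. For $k=s$, the two constraints enforce asymptotic balance, putting the graph $o(n^2)$-close to $T_s(n)$. For $k=s+1$, the system has (up to relabelling) the unique solution $K_{2n/(s+1),\,(s-1)n/(s(s+1)),\ldots,(s-1)n/(s(s+1))}$; at $q=s$ this graph is not colorable, and for $q\in\{s+1,s+2\}$ its optimal rate is respectively $0$ and $\tfrac{2}{s+1}\log 2$, both strictly less than $\rho(T_s(n),q)=\tfrac{q-s}{s}\log 2$. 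For $k=s+2$ (only when $q=s+2$) every $c_i$ is forced to $1$, yielding rate $0$. Augmented graphs in $\mc{G}_s$ must have $o(n)$ extras to meet the edge constraint, and thus are themselves $o(n^2)$-close to $T_s(n)$. Consequently, any candidate in $\mc{G}_s$ not $o(n^2)$-close to $T_s(n)$ loses strictly in $\rho$, and \thmref{thm:structure-graphs} yields the conclusion.

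For part (ii), let $c:=q-s\in[3,s-7]$. I propose the explicit counterexample $H_{s,q}$: the complete $(s+1)$-partite graph with $a=c-1$ large parts of size $\alpha n$ and $b=s+2-c$ small parts of size $\beta n$, where
\[
\alpha=\frac{1}{s+1}\left(1+\sqrt{\tfrac{s-c+2}{s(c-1)}}\right),\qquad \beta=\frac{1-a\alpha}{b}.
\]
A direct substitution gives $a\alpha+b\beta=1$ and $a\alpha^2+b\beta^2=\tfrac{1}{s}$, so $H_{s,q}$ has $n$ vertices and matches the Tur\'an edge count (after rounding part sizes to integers, absorbed by $O(n)$ local edits). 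Assigning $2$ colors to each large part and $1$ to each small part uses $2a+b=s+c=q$ colors and contributes rate $(c-1)\alpha\log 2$, while the optimal tuple $(2,\ldots,2,1,\ldots,1)$ for $T_s(n)$ yields rate $\tfrac{c}{s}\log 2$. The inequality $(c-1)\alpha>\tfrac{c}{s}$ simplifies to $\sqrt{s(c-1)(s-c+2)}>s+c$, equivalent after squaring and rearrangement to $(s+1)(c^2-sc+2s)<0$. The roots of $c^2-sc+2s$ are $(s\pm\sqrt{s(s-8)})/2$; for $s\ge 10$ they bracket $[3,s-7]$, with the endpoints reducing respectively to $(s-9)(s+1)>0$ at $c=3$ and $5s^2-44s-49>0$ at $c=s-7$, both valid. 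Since the rate gap is $\Omega(1/s)$, it dominates sub-exponential factors, giving $P_{H_{s,q}}(q)>P_{T_s(n)}(q)$ for large $n$, refuting \conjref{conj:Lazebnik} for $(s,q)$.

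The main obstacle is a discretization issue in part (ii): $\alpha$ is generically irrational, so $\alpha n$ and $\beta n$ must be rounded to integers, creating $O(n)$ edge defects that are corrected by small local edits to hit the Tur\'an count exactly. These cost at most $O(n\log q)$ in $\log P$, which is swamped by the $\Omega(n/s)$ rate gap. The inequality verification itself reduces to the elementary one-variable statement $c^2-sc+2s<0$ on $[3,s-7]$, completed by the endpoint checks above.
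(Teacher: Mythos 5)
Part (i): your reduction starts from \thmref{thm:structure-graphs}, but that theorem only says the extremal graph $G$ is $o(n^2)$-close in edit distance to \emph{some} member $H\in\mc{G}_s$, and edit distance does not control the number of $q$-colorings: adding or deleting $o(n^2)$ edges can change $P_G(q)$ by far more than $e^{o(n)}$ (even from $0$ to positive, e.g.\ by deleting one edge of a $K_{q+1}$). So proving that every member of $\mc{G}_s$ far from $T_s(n)$ has a smaller exponential rate $\rho$ does not by itself contradict the extremality of $G$; you never link $P_G(q)$ to $\rho(H)$. What is needed is the stronger fact that the nearby structured graph can be taken to be $G_{\valpha}(n)$ for an $\valpha$ that \emph{solves} $\OPT(s)$, i.e.\ \corref{cor:stability-LPS} combined with \thmref{thm:structure}; then the comparison becomes a comparison of objective values of candidate supports, which is exactly the paper's route (Proposition \ref{prop:step_5} excludes $\mc{Q}$ since $s$ is an integer, and Lemma \ref{lem:fixed_Pk} evaluates the only possible $\mc{P}_{s+1}$ support, with sizes $1,\ldots,1,2$, giving $\frac{2}{s+1}\log 2<\frac{2}{s}\log 2$). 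A second, smaller error: your claim that for $k=s+1$ the constraints $\sum_i x_i=1$, $\sum_i x_i^2=\frac{1}{s}$ have a unique solution up to relabelling is false (the solution set is a positive-dimensional piece of a sphere); what is true, and what your argument actually needs, is that $\max x_1=\frac{2}{s+1}$ (Cauchy--Schwarz on the other $s$ coordinates), so the rate at $q=s+2$ is at most $\frac{2}{s+1}\log 2$ for \emph{every} such graph, and is $0$ at $q=s+1$.

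Part (ii): your explicit $(s+1)$-partite construction is sound at the level of proportions --- $a\alpha+b\beta=1$, $a\alpha^2+b\beta^2=\frac{1}{s}$, and the equivalence of $(c-1)\alpha>\frac{c}{s}$ with $c^2-sc+2s<0$ all check out, and this quadratic is indeed negative on $[3,s-7]$ for $s\ge 10$; note this is closer in spirit to the paper's Theorem \ref{thm:counterex-str} than to its actual proof of (ii), which instead embeds a fixed $(10,13)$ gadget into the $s$-balanced vector. The genuine gap is the passage from proportions to a graph with exactly $n$ vertices and exactly $e(T_s(n))$ edges. Your accounting, ``$O(n)$ edge edits cost at most $O(n\log q)$ in $\log P$, which is swamped by the $\Omega(n/s)$ rate gap,'' is backwards: for fixed $s,q$ the term $n\log q$ dominates $n/s$, and in any case the cost of \emph{adding} a single edge is not bounded by $\log q$ (one added edge can kill all colorings). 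The standard repair is to edit in one direction only: perturb the proportions by, say, $n^{-1/2}$ so that after rounding the complete multipartite graph has at least $e(T_s(n))$ edges at an $o(1)$ loss in rate, then delete the $O(n)$ surplus edges, which can only increase the number of colorings; alternatively, argue entirely inside $\FEAS(s)$ and invoke \thmref{thm:pob}, as the paper does. You should also state explicitly the matching upper bound $P_{T_s(n)}(q)\le e^{(\frac{c}{s}\log 2+o(1))n}$, which does follow from your multipartite formula but is used silently.
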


\subsection{Notation}

A graph $G=(V,E)$ is given by a pair of its (finite) vertex set $V(G)$ and
edge set $E(G)$, which does not contain loops or multi-edges. For a subset
$X$ of vertices, we use $G[X]$ to denote the subgraph of $G$ induced by $X$,
and write $e_G(X)$ to denote the number of edges in $G[X]$ (when it is clear
from the context, we will drop the subscript for brevity).
We will make the convention that the set of colors used in a proper
$q$-coloring is $[q]:=\{1, 2, \ldots, q\}$, and for the remainder of
the paper, we consider $q$ as a fixed integer parameter.
We also adopt the typographic convention for representing a
vector using boldface type, as in $\mathbf{v}$ for a vector whose
coordinates are $v_i$. To simplify the presentation, we often omit
floor and ceiling signs whenever these are not crucial and make no
attempts to optimize the absolute constants involved. All our
asymptotic notation symbols ($O$, $o$, $\Omega$, $\omega$, $\Theta$)
are relative to the number of vertices $n$, unless otherwise specified
with a subscript. Finally, the function $\log$ refers to the natural
logarithm.

\subsection{Organization}

In \secref{sec:optimization} we introduce the optimization
problem of \cite{LPS}, which asymptotically reduces the Linial-Wilf question to
a quadratically constrained linear problem, some tools used in mathematical
optimization, and some related results concerning the stability. We then study
the structure of solutions to the optimization problem for general instances
and finish the proof of \thmref{thm:structure-graphs} in \secref{sec:structure}.
In \secref{sec:applications}, we present some prompt applications of the
structural result obtained in the previous section, and we find the solutions for
certain instances including \thmref{thm:q=s+C} (i). The counterexamples
to \conjref{conj:Lazebnik} will be given in \secref{sec:counterexample},
which prove \thmref{thm:counterexample} and \thmref{thm:q=s+C} (ii).
In \secref{sec:opt_integer_s} we work on the optimization for integers $s$
by considering certain continuous relaxation, which leads to a complete proof
of \thmref{thm:main}. The final section contains some concluding remarks and
open problems.

\section{The optimization problem}
\label{sec:optimization}
The problem of maximizing the number of proper $q$-colorings can be
asymptotically reduced to a quadratically constrained linear program,
as shown by Loh et al in \cite{LPS}. In this section, we describe this
optimization problem. Following the notation of \cite{LPS}, we define the
objective and constraint functions
\[
  \OBJ(\valpha) := \sum_{A\ne \emptyset} \alpha_A \log |A|, \quad
  \V(\valpha):=\sum_{A\ne\emptyset} \alpha_A, \Text{and}
  \E(\valpha):=\sum_{A\cap B = \emptyset} \alpha_A \alpha_B,
\]
where the vector $\valpha$ has $2^q-1$ coordinates $\alpha_A\in \R$ indexed by
nonempty subsets $A\subseteq [q]$ of colors, and the sum in $\E(\valpha)$
runs over all \emph{unordered pairs} of disjoint nonempty sets $\{A, B\}$.
We shall sometimes write $\sum_A$ in place of $\sum_{A\ne\emptyset}$, when it is
clear from the context that the empty set is excluded. Let $\FEAS(s)$
be the \emph{feasible set} of vectors defined by the constraints
$\valpha \ge 0$, $\V(\valpha)=1$, and $\E(\valpha)\ge \frac{s-1}{2s}$,
where $1< s \le q$ is a real parameter (not necessarily integer).

\medskip
  \underline{\OPTprob}.
  Determine $\OPT(s):=\max_{\valpha\in \FEAS(s)}\OBJ(\valpha)$.
\medskip

The maximum value $\OPT(s)$ exists since $\FEAS(s)$ is compact. We remark that
our notation is slightly different from the notation in \cite{LPS}, as we
replaced the parameter $\gamma$ used in \cite{LPS} (corresponding to the edge
density of the target graph) with the $\frac{s-1}{2s}$ instead. Our choice of
parameter was motivated by fact that the balanced Tur\'{a}n graph $T_s(n)$
has approximately $\frac{s-1}{2s}\cdot n^2$ edges, where $1< s \le q$
is an integer number.

We are ready to state the main theorem from \cite{LPS}, which asymptotically
reduces the original problem of maximizing $q$-colorings to the previous
optimization problem.

\begin{theorem}

  \label{thm:pob}
  For any $\eps>0$, the following holds for any sufficiently large $n$, and
  any $1< s \le q$. \\
  \hspace*{10pt}
    (i) Every $n$-vertex graph with at least $\frac{s-1}{2s} n^2$
        edges has fewer than $e^{(\OPT(s)+\eps)n}$ proper $q$-colorings. \\
  \hspace*{10pt}
    (ii) Any $\valpha$ which solves $\OPT(s)$ yields a graph $G_{\valpha}(n)$
         which has at least $\frac{s-1}{2s}n^2-2^q n$ edges and more than
         $e^{(\OPT(s)-\eps)n}$ proper $q$-colorings.

\end{theorem}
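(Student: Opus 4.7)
The plan is to handle the two parts separately, with part (ii) by an explicit construction and part (i) by an entropy-style encoding argument.

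\emph{Part (ii).} Given an optimal $\valpha$ solving $\OPT(s)$, I build $G_\valpha(n)$ by partitioning $[n]$ into sets $V_A$ (indexed by nonempty $A \subseteq [q]$) of size $\flo{\alpha_A n}$, absorbing the at most $2^q$ leftover vertices into one class, and placing a complete bipartite graph between $V_A$ and $V_B$ exactly when $A \cap B = \emptyset$. Then
\[
  e(G_\valpha(n)) \;=\; \sum_{\{A,B\} : A \cap B = \emptyset} |V_A|\,|V_B| \;\ge\; n^2\,\E(\valpha) - 2^q n \;\ge\; \frac{s-1}{2s} n^2 - 2^q n,
\]
and every map $c$ with $c(v) \in A$ for each $v \in V_A$ is automatically a proper coloring (adjacent $u \in V_A$, $v \in V_B$ satisfy $A \cap B = \emptyset$). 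Hence
\[
  P_{G_\valpha(n)}(q) \;\ge\; \prod_A |A|^{|V_A|} \;\ge\; e^{n\,\OBJ(\valpha) - O(\log n)} \;\ge\; e^{(\OPT(s)-\eps) n}
\]
for $n$ large enough.

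\emph{Part (i).} The idea is to encode each proper $q$-coloring $c$ of $G$ by a pair $(\valpha(c),\sigma(c))$, where $\valpha(c) \in \FEAS(s)$ is a ``type'' and $\sigma(c)$ is a specification within the type. Given $c$, I choose a canonical family $(B(v))_v$ of nonempty subsets of $[q]$ satisfying $c(v) \in B(v)$ and $B(u) \cap B(v) = \emptyset$ for every edge $uv \in E(G)$ --- say by maximizing $\sum_v \log |B(v)|$ with lexicographic tie-breaking. Setting $V_A := \{v : B(v) = A\}$ and $\alpha_A := |V_A|/n$, we have $\V(\valpha) = 1$, and since every edge of $G$ is forced to cross two classes with disjoint labels,
\[
  e(G) \;\le\; \sum_{\{A,B\} : A \cap B = \emptyset} |V_A|\,|V_B| \;=\; n^2\,\E(\valpha),
\]
which gives $\valpha \in \FEAS(s)$. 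The number of types is polynomial in $n$ (each $\alpha_A n$ is an integer in $[0,n]$), so the target bound $P_G(q) \le \mathrm{poly}(n) \cdot e^{n\,\OPT(s)} \le e^{(\OPT(s)+\eps) n}$ reduces to the assertion that the number of colorings of each fixed type is at most $e^{n\,\OBJ(\valpha) + o(n)}$.

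\emph{The main obstacle} is precisely this last step. The naive decomposition of a coloring of type $\valpha$ into a partition $(V_A)_A$ and a choice of $c(v) \in A$ for each $v \in V_A$ introduces a spurious factor of $\binom{n}{(\alpha_A n)_A} = e^{\Theta(n)}$ for the arrangement, which swamps the desired $\prod_A |A|^{\alpha_A n} = e^{n\,\OBJ(\valpha)}$ from the color choices. The canonical (maximum-weight, lex-broken) choice of $(B(v))_v$ turns the partition into a function of $c$, not an independent degree of freedom, and so in principle eliminates this entropy factor; but rigorously bounding the number of realizable canonical partitions of a given type by $e^{o(n)}$ requires either a Shearer-style entropy inequality or a container-type counting argument. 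This is the technical heart of the reduction proved in \cite{LPS}, and is where I would expect the bulk of the effort to go.
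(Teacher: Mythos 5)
Your part (ii) is correct, and it is exactly the construction that the paper itself sketches in the paragraph following the theorem: partition into clusters $V_A$ of size $\approx\alpha_A n$, join $V_A$ to $V_B$ completely when $A\cap B=\emptyset$, count the colorings $c$ with $c(v)\in A$ for $v\in V_A$, and note that the rounding errors cost at most $2^qn$ edges and a constant factor in the number of colorings. Note, however, that this paper does not prove \thmref{thm:pob} at all: it is imported wholesale from \cite{LPS}, so a blind proof must supply the full upper-bound argument, and that is where your part (i) stops short.

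The gap is the one you flag yourself, and it is not a routine technicality but the entire content of the reduction. After the (correct) observations that any canonical system $(B(v))_v$ with $c(v)\in B(v)$ and $B(u)\cap B(v)=\emptyset$ across edges yields a type $\valpha\in\FEAS(s)$, and that there are only polynomially many types, everything hinges on the claim that the colorings of a fixed type number at most $e^{n\,\OBJ(\valpha)+o(n)}$ --- equivalently, that only $e^{o(n)}$ canonical partitions $(V_A)_A$ of a given type can arise. You do not prove this, and a maximum-weight-plus-lexicographic tie-breaking at the vertex level gives you no leverage on it: nothing in that definition prevents a dense graph from realizing exponentially many distinct maximum-weight set systems of the same type, so the spurious $\binom{n}{(\alpha_A n)_A}$ factor has not actually been removed, only renamed. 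The device used in \cite{LPS} is different in kind: they apply Szemer\'edi's regularity lemma to $G$ and define the color sets at the level of clusters (roughly, the colors appearing on a non-negligible fraction of a cluster), so that disjointness is forced across dense regular pairs up to $\eps$-errors and, crucially, the number of cluster-level ``types'' is a constant depending only on $\eps$ and $q$ rather than anything growing with $n$; the vertices deviating from their cluster's color set are few and contribute only $e^{\eps n}$. Without an input of this sort (regularity, or a container/entropy argument that you would have to actually carry out), part (i) remains an unproved reduction, so as written the proposal establishes only part (ii).
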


The construction of $G_\valpha(n)$ in \cite{LPS} is as follows.
Partition the vertex set of $G_{\valpha}(n)$ into clusters $V_A$ such
that $|V_A|$ differs from $\alpha_A n$ by less than $1$, and for every disjoint
pair $A, B\subseteq [q]$, join every vertex in $V_A$ to every vertex of $V_B$
by an edge. Assume that $\valpha$ solves $\OPT(s)$, it is easy to 
show that the number of proper $q$-colorings of
$G_{\valpha}(n)$ is roughly $e^{(\OPT(s)+o(1))n}$, and if $m$ denotes
the number of edges of $G_\valpha(n)$ then $\lt|m -\frac{s-1}{2s} n^2\rt|
\le 2^q n$.

To prove the stability of our results, we need to following statement
from \cite{LPS}.

\begin{theorem}

  \label{thm:stability-LPS}
  For any real $\eps>0$ and $s>1$, the following holds for all sufficiently
  large $n$. Let $G$ be an $n$-vertex graph with $m\le \frac{s-1}{2s}n^2$
  edges which maximizes the number of $q$-colorings. Then $G$ is
  $\eps n^2$-close to $G_\valpha(n)$, for an $\valpha$ which solves
  $\OPT(s')$ for some $|\frac{s'-1}{2s'}-\frac{m}{n^2}|<\eps$ with $s'\le s$.

\end{theorem}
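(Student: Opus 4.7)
The plan is to combine the two directions of \thmref{thm:pob} with a compactness argument on the optimization problem. First, I would choose $s'\in(1,s]$ so that $\frac{s'-1}{2s'}=\frac{m}{n^2}$ exactly (this is possible since $\frac{s-1}{2s}$ is continuous in $s$ and $m\le \frac{s-1}{2s}n^2$). Pick any optimizer $\valpha^\ast$ of $\OPT(s')$. By \thmref{thm:pob}(ii), the graph $G_{\valpha^\ast}(n)$ has at least $m-2^q n$ edges and more than $e^{(\OPT(s')-\eps'/3)n}$ proper $q$-colorings, where $\eps'\ll \eps$. Adding or deleting at most $2^q n$ edges yields a graph with exactly $m$ edges whose coloring count drops by at most a factor $q^{O(n)}$; choosing $\eps'$ small enough, the resulting graph still has more than $e^{(\OPT(s')-\eps'/2)n}$ colorings. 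Since $G$ is extremal in $\mc{F}_{n,m}$, this forces the lower bound $P_G(q)\ge e^{(\OPT(s')-\eps')n}$.

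Second, \thmref{thm:pob}(i) applied to $G$ with parameter $s'$ gives $P_G(q)< e^{(\OPT(s')+\eps')n}$. So $P_G(q)$ is pinned within a factor $e^{2\eps' n}$ of $e^{\OPT(s')n}$. Now I would re-examine the proof of the upper bound in \cite{LPS}: each proper $q$-coloring of $G$ gets assigned a \emph{type vector} $\valpha\in \FEAS(s')$ (recording the normalized sizes of the classes $V_A$ determined by the coloring together with a Szemer\'edi-regular partition of $G$), and the number of colorings of any fixed type is at most $e^{(\OBJ(\valpha)+o(1))n}$. Since there are only polynomially many distinct type vectors up to $o(1)$ precision, the dominant contribution to $P_G(q)$ must come from a type $\valpha$ with $\OBJ(\valpha)\ge \OPT(s')-3\eps'$. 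By compactness of $\FEAS(s')$ and the continuity of $\OBJ$ and $\E$, every such near-optimal $\valpha$ is within $\ell^1$-distance $\delta(\eps')\to 0$ of the (compact) solution set of $\OPT(s')$, so we may replace $\valpha$ by a genuine optimizer $\valpha'$ while losing only $o(n^2)$ in edit distance.

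Third, I would promote this type-closeness to edit-distance closeness. Fix a dominating coloring with near-optimal type $\valpha'$. The clusters $V_A$ it induces satisfy $||V_A|/n-\alpha'_A|=o(1)$; moreover, for any disjoint $A,B\subseteq[q]$ almost every pair in $V_A\times V_B$ must be an edge of $G$ (otherwise the coloring's type would have smaller $\E(\valpha')$, contradicting $\valpha'\in\FEAS(s')$ together with near-tightness of the edge constraint), while almost no pair inside some $V_A$ with $|A|\ge 2$ can be an edge (else such vertices could not share colors, reducing $\OBJ$). Relabelling vertices by their cluster, $G$ differs from $G_{\valpha'}(n)$ on at most $o(n^2)$ edges, which is the desired conclusion.

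The main obstacle is making the second step quantitative: one needs a \emph{uniform} stability statement of the form ``$\OBJ(\valpha)\ge \OPT(s')-\eta$ implies $\valpha$ is $\eta'$-close to an optimizer,'' with $\eta'\to 0$ as $\eta\to 0$, uniform over $s'$ in the relevant range. This follows from compactness of $\FEAS(s')$ and continuous dependence of the feasible set on $s'$, but care is required to ensure the argument does not degenerate when $s'$ is near $1$ or near $q$; restricting to $s'$ bounded away from these endpoints (using the hypothesis $s'\le s$ and $m=\Omega(n^2)$) handles this cleanly and lets the entropy/regularity bookkeeping of \cite{LPS} go through verbatim.
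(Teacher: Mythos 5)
You should note first that the paper does not prove \thmref{thm:stability-LPS} at all: it is imported verbatim from \cite{LPS} (it is one of the main stability results there), so there is no internal proof to compare against, and the question is whether your outline stands on its own. It does not, and the most concrete failure is the edge-count adjustment in your first step. You claim that turning $G_{\valpha^\ast}(n)$ into a graph with exactly $m$ edges by adding or deleting at most $2^q n$ edges costs a factor $q^{O(n)}$ which can be absorbed ``by choosing $\eps'$ small enough.'' That cannot work: $q^{O(n)}=e^{cn}$ for a constant $c>0$ depending on $q$ (and on the bound $2^qn$ on the number of altered edges) but not on $\eps'$, so it swamps $e^{\eps' n/2}$ no matter how small $\eps'$ is chosen; moreover, adding an edge can in general destroy a large fraction of the colorings (for arbitrary graphs even all of them), and even in $G_{\valpha}(n)$ each added edge costs a constant factor, so $2^qn$ additions again lose $e^{\Theta(n)}$. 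The standard repair is different: choose the auxiliary density slightly \emph{above} $m/n^2$, using the continuity of $\OPT$ (\cite[Claim 5, p.661]{LPS}) and \obsref{obs:monotone}, so that the constructed graph has at least $m$ edges, and then only \emph{delete} edges, which never decreases the number of colorings.

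Beyond that, your second and third steps are not a proof but a pointer to the very argument being cited: the decomposition of the colorings of $G$ into polynomially many types lying (approximately) in $\FEAS(s')$, the per-type bound $e^{(\OBJ(\valpha)+o(1))n}$, and, crucially, the fact that clusters with intersecting color lists span few edges, are exactly the regularity-based content of \cite{LPS} and are nowhere established in your outline. The shortcut justifications you do give in the third step are incorrect as stated: a missing pair in $V_A\times V_B$ does not change $\E(\valpha')$, which is a function of the type vector and not of the edges of $G$, and an edge inside a cluster $V_A$ with $|A|\ge 2$ is perfectly compatible with a proper coloring of that type (its endpoints simply receive different colors from $A$), so neither claimed contradiction exists. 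What is actually needed is a quantitative statement that near-optimal types satisfy $\E(\valpha')\le m/n^2+o(1)$ (a stability version of \propref{prop:tight}, via the strict monotonicity and continuity of $\OPT$), the intersecting-pair sparsity lemma from \cite{LPS}, and then an edge count comparing $m$ with the number of available disjoint cross pairs. Finally, your uniformity fix invokes ``the hypothesis $m=\Omega(n^2)$,'' which the theorem does not contain ($m$ may well be $o(n^2)$); the allowance $|\frac{s'-1}{2s'}-\frac{m}{n^2}|<\eps$ in the statement exists precisely to cover this regime, and your insistence on matching the density exactly ignores it.
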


Using \thmref{thm:stability-LPS} together with the continuity of $\OBJ$, $\V$,
$\E$ and $\OPT$ (for the continuity of $\OPT$ we refer the interested
reader to \cite[Claim 5, p.661]{LPS}), one can derive a more
convenient statement whose proof we defer to the appendix (see
\secref{sec:stability}).

\begin{corollary}

  \label{cor:stability-LPS}
  For any real $s>1$, the following holds for all sufficiently
  large $n$. Let $G$ be an $n$-vertex graph with $m=\frac{s-1}{2s}n^2 + o(n^2)$
  edges which maximizes the number of $q$-colorings. Then $G$ is
  $o(n^2)$-close to $G_\valpha(n)$ for some $\valpha$ which solves
  $\OPT(s)$.

\end{corollary}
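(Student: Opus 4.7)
The plan is to deduce \corref{cor:stability-LPS} from \thmref{thm:stability-LPS} together with the continuity of the solution map of $\OPT(\cdot)$. Since $m = \frac{s-1}{2s}n^2 + o(n^2)$, choose $s_n$ so that $\frac{s_n-1}{2s_n} = \frac{m}{n^2}$, which gives $s_n \to s$. Pick any sequence $\eps_n \to 0$ slowly enough that \thmref{thm:stability-LPS}, applied with $\max(s, s_n)$ in place of $s$ and with $\eps_n$ in place of $\eps$, holds for every sufficiently large $n$. This produces a vector $\valpha'_n$ solving $\OPT(s'_n)$ for some $s'_n \le \max(s, s_n) = s + o(1)$ with $|\frac{s'_n-1}{2s'_n} - \frac{m}{n^2}| < \eps_n$, and such that $G$ is $\eps_n n^2$-close to $G_{\valpha'_n}(n)$. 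Combining these two estimates forces $s'_n \to s$.

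Next, I would pass to a convergent subsequence $\valpha'_n \to \valpha^*$ (which exists by compactness of the simplex $\{\valpha \ge 0 : \V(\valpha)=1\}$) and verify that $\valpha^*$ solves $\OPT(s)$. Continuity of $\V$ and $\E$ gives $\V(\valpha^*) = 1$ and $\E(\valpha^*) \ge \lim \frac{s'_n-1}{2s'_n} = \frac{s-1}{2s}$, so $\valpha^* \in \FEAS(s)$; while continuity of $\OBJ$ together with the continuity of $\OPT$ established in \cite[Claim 5]{LPS} yields $\OBJ(\valpha^*) = \lim \OPT(s'_n) = \OPT(s)$, hence $\valpha^*$ is a maximizer. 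Applying this reasoning to every subsequence yields an upper semicontinuity statement: for each large $n$ there is some $\valpha_n$ solving $\OPT(s)$ with $\|\valpha_n - \valpha'_n\|_\infty \to 0$.

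Finally, I would bound the edit distance between $G$ and $G_{\valpha_n}(n)$ via the triangle inequality. Using the construction of $G_\valpha(n)$ recalled in \secref{sec:optimization}, one may couple $G_{\valpha_n}(n)$ and $G_{\valpha'_n}(n)$ on the same vertex set so that at most $O(n \sum_A |(\valpha_n)_A - (\valpha'_n)_A|) = o(n)$ vertices are reassigned between clusters, and each such reassignment alters at most $n$ edges; hence the edit distance between $G_{\valpha_n}(n)$ and $G_{\valpha'_n}(n)$ is $o(n^2)$. Combined with the $\eps_n n^2$-closeness of $G$ to $G_{\valpha'_n}(n)$, this yields the claimed $o(n^2)$-closeness of $G$ to $G_{\valpha_n}(n)$. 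The main obstacle is the upper semicontinuity step: it crucially relies on continuity of $\OPT(\cdot)$ at $s$, without which the accumulation point $\valpha^*$ might be merely feasible but strictly suboptimal, breaking the construction of a solution to $\OPT(s)$ near $\valpha'_n$.
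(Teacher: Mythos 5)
Your argument is essentially the paper's proof of \corref{cor:stability-LPS}, just phrased directly instead of by contradiction: the paper assumes a bad sequence $\{G_t\}$ exists, applies \thmref{thm:stability-LPS} with the fixed parameter $s+1$ and accuracy $\eps/2t$, extracts a convergent subsequence $\valpha_t\to\valpha$, uses continuity of $\V,\E,\OBJ$ and of $\OPT$ to see that the limit solves $\OPT(s)$, and transfers closeness from $G_{\valpha_t}(n_t)$ to $G_{\valpha}(n_t)$ --- exactly your compactness/upper-semicontinuity step plus the cluster-coupling bound on the edit distance, which the paper states without the explicit $O(n\sum_A|\cdot|)$ computation you give. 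The one point where your write-up is looser than the paper's: you invoke \thmref{thm:stability-LPS} with the $n$-dependent parameter $\max(s,s_n)$, but the theorem's ``sufficiently large $n$'' threshold depends on the parameter $s$ as well as on $\eps$, and no uniformity in that parameter is provided, so ``choose $\eps_n\to 0$ slowly enough'' does not by itself legitimize an application whose second parameter also moves with $n$. This is easily repaired exactly as in the paper: since $m/n^2\to\frac{s-1}{2s}<\frac{\sigma-1}{2\sigma}$ for any fixed $\sigma>s$ (e.g.\ $\sigma=s+1$), apply the theorem with that fixed $\sigma$ and diagonalize over $\eps$ alone; you then get $s'_n\le\sigma$ rather than $s'_n\le s+o(1)$, but your later steps only use $s'_n\to s$, which already follows from $|\frac{s'_n-1}{2s'_n}-\frac{m}{n^2}|<\eps_n\to 0$ and the monotonicity of $s'\mapsto\frac{s'-1}{2s'}$. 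With that substitution your proof is correct and matches the paper's.
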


\subsection{Some preliminaries about mathematical optimization}
\label{subsec:KKT}

\noindent One commonly used tool in mathematical optimization is the method of
\emph{Lagrange multipliers}, which is used to find the extrema of a multivariate
function $f(\vx)$ subject to the constraints $g_i(\vx)=0$ for $i=1, \ldots, m$,
where the \emph{objective function} $f:\R^n\to\R$ and
\emph{constraint functions} $g_i:\R^n\to \R$ are continuously differentiable.
This method asserts that if $\vx_0$ is a \emph{regular} local extremum for $f$,
then there exist constants $\lambda_i$ (the \emph{Lagrange multipliers}) such
that
\begin{equation}
  \label{eqn:lagrange}
  \nabla f(\vx_0) = \sum_{i=1}^m \lambda_i \nabla g_i(\vx_0),
\end{equation}
where a point $\vx_0\in\R^n$ is called \emph{regular} if the gradients $\nabla
g_1(\vx_0),\nabla g_2(\vx_0), \ldots, \nabla g_m(\vx_0)$ are linearly
independent over $\R$.

An extension of the method of Lagrange multipliers is the
\emph{method of Karush-Kuhn-Tucker} (see, e.g., \cite{BV}), or KKT for short.
Consider the following optimization problem:
\begin{equation}
  \label{eqn:kkt_optimization}
  \text{Maximize}\enskip f(\vx), \enskip \text{subject to}\enskip g_i(\vx)\le 0
  \enskip\text{and}\enskip h_j(\vx) = 0,
\end{equation}
where $f$ is the \emph{objective function}, $g_i$ ($i=1, 2, \ldots, m$) are the
\emph{inequality constraint functions} and $h_j$ ($j=1, 2, \ldots, l$) are the
\emph{equality constraint functions}. If $f:\R^n\to\R$, $g_i:\R^n\to \R$,
$h_j:\R^n\to \R$ are continuously differentiable at a point $\vx_0$, and $\vx_0$
is a local extremum for $f$ that satisfies some \emph{regularity conditions}
(see below), then there exist constants $\mu_i$ ($i=1, \ldots, m$)
and $\lambda_j$ ($j = 1, \ldots, l$), named \emph{KKT multipliers}, such
that the following hold
\begin{equation}
  \label{eqn:kkt_conditions}
  \begin{array}{rcl}
    \nabla f(\vx_0)  &=& {\displaystyle \sum_{i=1}^m \mu_i \nabla g_i(\vx_0) +
                         \sum_{j=1}^l \lambda_j \nabla h_j(\vx_0)}\\
    \mu_i g_i(\vx_0) &=& 0 \Text{for} i=1, 2, \ldots, m\\
    \mu_i            &\ge& 0 \Text{for} i=1, 2, \ldots, m.
  \end{array}
\end{equation}
A point $\vx_0$ is \emph{regular} for the KKT optimization if it satisfies some
constraint qualifications, such as:
\begin{itemize}
  \item Linear independence constraint qualification (LICQ): the gradients of
        the \emph{active} inequality constraints (the set of all constraints
        $g_i(\vx_0)\le 0$ for which the equality holds) and the gradients of the
        equality constraints are linearly independent at $\vx_0$;

  \item Slater's condition: If the equality constraints are given by linear
        functions $h_j$, and the inequality constraints are given by convex
        functions $g_i$, and there exists a point $\vx_1$ such that
        $h_j(\vx_1) = 0$ for all $j$ and $g_i(\vx_1) < 0$ for all $i$, then any
        point $\vx_0$ is regular.
\end{itemize}

\subsection{Basic properties of the solutions}
In this section we prove some basic results that will be used later
in the paper. We begin with a proposition asserting that $\E(\valpha)$
should be least possible for the optimal feasible $\valpha$. We remark that
the next statement appears implicitly in \cite{LPS}.

\begin{proposition}

  \label{prop:tight}
  Let $\valpha \in \FEAS(s)$ be a local maximum point for $\OBJ$. Then
  $\E(\valpha)=\frac{s-1}{2s}$.

\end{proposition}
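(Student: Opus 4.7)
I will prove the contrapositive: assuming the strict slack $\E(\valpha) > \frac{s-1}{2s}$, I will exhibit an explicit feasible perturbation of $\valpha$ whose objective value strictly exceeds $\OBJ(\valpha)$, contradicting local maximality. The guiding intuition is that $\OBJ$ is a linear functional with weights $\log|A|$ that are monotone in $|A|$, so shifting any amount of mass from $\alpha_A$ to $\alpha_{A\cup\{w\}}$ strictly increases $\OBJ$; the only thing that could block such an upgrade is the constraint $\E \ge \frac{s-1}{2s}$, and that constraint only obstructs motion when it is tight.

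First I would argue that some $A$ with $\emptyset \neq A \subsetneq [q]$ has $\alpha_A > 0$. If instead all mass sat on $\alpha_{[q]}$, then no two disjoint nonempty subsets of $[q]$ could both have positive weight, forcing $\E(\valpha)=0 < \frac{s-1}{2s}$, a contradiction. Fix such an $A$, pick any $w \in [q] \setminus A$, and consider the one-parameter family $\valpha^\epsilon$ obtained by decreasing $\alpha_A$ by $\epsilon$ and increasing $\alpha_{A\cup\{w\}}$ by $\epsilon$, leaving all other coordinates unchanged. Nonnegativity holds for $\epsilon \in [0,\alpha_A]$, the total mass $\V(\valpha^\epsilon) = 1$ is trivially preserved, and a one-line computation gives the exact equality $\OBJ(\valpha^\epsilon) - \OBJ(\valpha) = \epsilon \log\frac{|A|+1}{|A|} > 0$.

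The main obstacle, and really the only nontrivial point, is verifying that $\E(\valpha^\epsilon) \ge \frac{s-1}{2s}$ for small $\epsilon$. The crucial observation is that the two modified sets $A$ and $A\cup\{w\}$ are \emph{not} disjoint from each other (they share $A$), so the pair $\{A, A\cup\{w\}\}$ does not appear in the sum defining $\E$, and consequently $\E(\valpha^\epsilon)$ is an exactly \emph{linear} function of $\epsilon$ with no $\epsilon^2$ cross-term. A direct bookkeeping of the pairs affected by the two changed coordinates yields
$$\E(\valpha^\epsilon) - \E(\valpha) \;=\; -\epsilon \sum_{D\,:\, D \cap A = \emptyset,\; w \in D} \alpha_D \;\le\; 0,$$
so $\E$ only decreases. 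Combined with the strict slack $\E(\valpha) > \frac{s-1}{2s}$, this shows $\valpha^\epsilon \in \FEAS(s)$ for all sufficiently small $\epsilon > 0$. Since $\valpha^\epsilon$ can be chosen arbitrarily close to $\valpha$ with strictly larger objective, $\valpha$ cannot be a local maximum, completing the contrapositive.
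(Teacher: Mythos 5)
Your proof is correct and follows essentially the same route as the paper: both exploit the strict slack in $\E$ to shift a small amount of mass from a proper nonempty set $A$ with $\alpha_A>0$ onto a strictly larger set, which strictly increases the linear objective while only decreasing $\E$ by $O(\eps)$. The paper's only (inessential) difference is that it shifts the mass to $[q]$ rather than to $A\cup\{w\}$, with the gain $\eps\log\frac{q}{|A|}$ in place of your $\eps\log\frac{|A|+1}{|A|}$.
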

\begin{proof}
Assume, towards contradiction, that there exists a local maximum point
$\valpha$ to $\OBJ$ such that $\E(\valpha) > \frac{s-1}{2s}$. Since
$\E(\valpha) > 0$, there exists a nonempty set $A\subseteq [q]$ such
that $\alpha_A \cdot \sum_{B\cap A = \emptyset} \alpha_B > 0$.
In particular $A \ne [q]$ and $\alpha_A > 0$.
Let $\valpha'$ be obtained from $\valpha$ by setting
$\alpha'_{A}:=\alpha_A - \eps$, $\alpha'_{[q]} := \alpha_{[q]} + \eps$
and keeping all the other entries unchanged, where $\eps > 0$ is small. We
have $\V(\valpha')=\V(\valpha)=1$,
\[
  \E(\valpha') = \E(\valpha) - \sum_{B\cap A =\emptyset} \eps\cdot\alpha_B
  \quad\Text{and}\quad
  \OBJ(\valpha') = \OBJ(\valpha) + \eps \cdot\log \frac{q}{|A|},
\]
and by taking $\eps$ sufficiently small, we can obtain
$\valpha'\in\FEAS(s)$ such that $\OBJ(\valpha')>\OBJ(\valpha)$, which
contradicts the local maximality of $\valpha$, concluding the proof of
this proposition.
\end{proof}

Since $\FEAS(s')$ is always a subset of $\FEAS(s)$ for $s'\ge s$,
we have
\begin{observation}

  \label{obs:monotone}
  $\OPT(s)$ is strictly decreasing with respect to the parameter $s$.

\end{observation}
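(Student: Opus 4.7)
The observation claims strict monotonicity of $\OPT(s)$ as a function of $s$. My plan has two stages: first establish weak monotonicity via a containment of feasible sets, and then upgrade to strict monotonicity using the tightness property from Proposition~\ref{prop:tight}.

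First, I would verify the inclusion $\FEAS(s_2)\subseteq\FEAS(s_1)$ whenever $1<s_1<s_2\le q$. Since $\frac{s-1}{2s}=\frac{1}{2}-\frac{1}{2s}$ is strictly increasing in $s$, any $\valpha$ with $\E(\valpha)\ge\frac{s_2-1}{2s_2}$ automatically satisfies $\E(\valpha)\ge\frac{s_1-1}{2s_1}$, while the constraints $\valpha\ge 0$ and $\V(\valpha)=1$ are independent of $s$. This immediately yields $\OPT(s_2)\le\OPT(s_1)$, which is the content of the observation as stated in the body (``strictly decreasing'' will need the extra argument below).

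To obtain the strict inequality, suppose for contradiction that $\OPT(s_1)=\OPT(s_2)$ for some $s_1<s_2$. By compactness of $\FEAS(s_2)$ and continuity of $\OBJ$, the maximum over $\FEAS(s_2)$ is attained at some point $\valpha^*\in\FEAS(s_2)$ with $\OBJ(\valpha^*)=\OPT(s_2)=\OPT(s_1)$. Since $\valpha^*\in\FEAS(s_2)\subseteq\FEAS(s_1)$ and $\OBJ(\valpha^*)=\OPT(s_1)$, the point $\valpha^*$ is in particular a (global, hence local) maximum of $\OBJ$ on $\FEAS(s_1)$. Applying \propref{prop:tight} with parameter $s_1$ forces $\E(\valpha^*)=\frac{s_1-1}{2s_1}$. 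But $\valpha^*\in\FEAS(s_2)$ requires $\E(\valpha^*)\ge\frac{s_2-1}{2s_2}>\frac{s_1-1}{2s_1}$, a contradiction.

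The argument is essentially a direct combination of the feasibility nesting with the tightness proposition, so I do not expect any real obstacle; the only subtlety is remembering that a global maximum qualifies as a local maximum so that \propref{prop:tight} applies to $\valpha^*$ viewed as a point of $\FEAS(s_1)$.
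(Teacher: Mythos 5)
Your proof is correct and follows essentially the paper's route: the paper justifies the observation by the single remark that $\FEAS(s')\subseteq\FEAS(s)$ for $s'\ge s$, leaving the strictness implicit in the immediately preceding \propref{prop:tight}. Your second stage simply makes that implicit step explicit (a maximizer for the larger $s$ would be a global, hence local, maximum in the larger feasible set, forcing $\E$ to equal the smaller threshold, contradicting feasibility for the larger $s$), which is exactly the intended reasoning.
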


We finish the section with two fairly straightforward propositions
about the sets in the support of a solution. The first one
is as follows.
\begin{proposition}

  \label{prop:color_cover}
  For any solution $\valpha$ of $\OPT(s)$, we have $\bigcup
  \{A: \alpha_A > 0\} =[q]$.

\end{proposition}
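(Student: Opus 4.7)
The plan is to argue by contradiction via the same type of mass-shift perturbation used in \propref{prop:tight}. Suppose some color $c \in [q]$ satisfies $c \notin A$ for every $A$ with $\alpha_A > 0$. Since $\V(\valpha) = 1$, I can fix a nonempty $A_0 \subseteq [q]$ with $\alpha_{A_0} > 0$, and by assumption $c \notin A_0$, so $A_0 \subsetneq A_0 \cup \{c\} \subseteq [q]$. I would then define $\valpha'$ by setting $\alpha'_{A_0} := \alpha_{A_0} - \eps$ and $\alpha'_{A_0 \cup \{c\}} := \alpha_{A_0 \cup \{c\}} + \eps$, leaving all other entries unchanged. For any $\eps \in (0, \alpha_{A_0}]$ this keeps $\valpha' \ge 0$ and gives $\V(\valpha') = \V(\valpha) = 1$.

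The central step is to check that $\E(\valpha') = \E(\valpha)$. Note first that the pair $\{A_0, A_0 \cup \{c\}\}$ is not disjoint (they intersect in $A_0$), so no quadratic cross-term is introduced by the perturbation, and the net change in $\E$ reduces to
\[
  \eps \Bigl( \sum_{B \,:\, B \cap (A_0 \cup \{c\}) = \emptyset} \alpha_B \;-\; \sum_{B \,:\, B \cap A_0 = \emptyset} \alpha_B \Bigr).
\]
The two sums differ only on sets $B$ with $c \in B$, and by the contradiction hypothesis $\alpha_B = 0$ for every such $B$. Therefore the sums agree, and consequently $\E(\valpha') = \E(\valpha) \ge \frac{s-1}{2s}$, so $\valpha' \in \FEAS(s)$.

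Finally I would compute
\[
  \OBJ(\valpha') - \OBJ(\valpha) = \eps \log \frac{|A_0|+1}{|A_0|} > 0,
\]
contradicting the optimality of $\valpha$. I do not foresee any real obstacle in the argument; the only point requiring some care is the bookkeeping in the $\E$-identity, where the missing-color hypothesis is exactly what cancels the otherwise-problematic terms indexed by sets containing $c$.
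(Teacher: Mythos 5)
Your proof is correct, and it is essentially the paper's argument: the paper also exploits that the missing colors lie in no support set, replacing a support set $A$ by $A\cup B$ (with $B$ the full set of uncovered colors, i.e.\ shifting all of $\alpha_A$ at once) so that $\V$ and $\E$ are unchanged while $\OBJ$ strictly increases. Your $\eps$-shift onto $A_0\cup\{c\}$ for a single missing color $c$ is just a mild variant of the same perturbation, and your bookkeeping for $\E$ is accurate.
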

\begin{proof}
Assume, for contradiction, that $\bigcup\{A: \alpha_A > 0\}\ne [q]$
and let $B = [q] \setminus \bigcup\{A: \alpha_A > 0\}$. Let $A$
be any set in the support of $\valpha$. If we ``replace'' $A$ by
$A\cup B$, i.e., if we let $\valpha'$ be the vector obtained from
$\valpha$ such that $\alpha'_{A\cup B} = \alpha_A$, $\alpha'_A = 0$
and $\alpha'_X = \alpha_X$ for all $X\not\in \{A\cup B, A\}$, then
$\V(\valpha') = \V(\valpha)$, $\E(\valpha')=\E(\valpha)$
and $\OBJ(\valpha') > \OBJ(\valpha)$, a contradiction.
\end{proof}

The last proposition gives us some information about the support
whenever it contains an intersecting pair.

\begin{proposition}

  \label{prop:supp_inter}
  Let $\valpha$ be a solution to $\OPT(s)$, and suppose there exist
  two sets $A,B\subseteq [q]$ in the support of $\valpha$ such that $A\cap B
  \ne \emptyset$. Then either $A\subseteq B$ or for every $c\in A\setminus B$
  there exists a set $C$ in the support of $\valpha$ which is disjoint from $B$
  and contains $c$.

\end{proposition}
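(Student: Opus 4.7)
My plan is to argue by contradiction via an infinitesimal perturbation, in the spirit of \propref{prop:tight} and \propref{prop:color_cover}. If $A \subseteq B$ there is nothing to prove, so I may assume $A \not\subseteq B$, fix any $c \in A \setminus B$, and suppose toward a contradiction that no set $C$ in the support of $\valpha$ satisfies both $c \in C$ and $C \cap B = \emptyset$.

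The perturbation I would use shifts a small amount of mass from $\alpha_B$ to $\alpha_{B \cup \{c\}}$: put $\alpha'_B := \alpha_B - \eps$, $\alpha'_{B \cup \{c\}} := \alpha_{B \cup \{c\}} + \eps$, and leave every other entry unchanged. Since $B$ belongs to the support, $\alpha_B > 0$, so this is a feasible direction for all sufficiently small $\eps > 0$; and since $c \notin B$, the two sets $B$ and $B \cup \{c\}$ are genuinely distinct. Clearly $\V(\valpha') = \V(\valpha) = 1$.

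Next I would check that $\E$ is unchanged while $\OBJ$ strictly increases. Because $B \cap (B \cup \{c\}) \ne \emptyset$, the quadratic form $\E$ contains no $\alpha_B\,\alpha_{B \cup \{c\}}$ cross-term, so the perturbation produces only a first-order effect:
$$\E(\valpha') - \E(\valpha) = \eps\Bigl(\sum_{Y \cap (B \cup \{c\})=\emptyset}\alpha_Y - \sum_{Y \cap B=\emptyset}\alpha_Y\Bigr) = -\eps\sum_{\substack{Y \cap B=\emptyset\\ c \in Y}}\alpha_Y.$$
By the contradiction hypothesis every term on the right vanishes, so $\E(\valpha')=\E(\valpha)\ge\frac{s-1}{2s}$ and hence $\valpha'\in\FEAS(s)$. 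Finally, $\OBJ(\valpha')-\OBJ(\valpha) = \eps\log((|B|+1)/|B|) > 0$, contradicting the optimality of $\valpha$.

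The argument reduces, as in \propref{prop:tight}, to producing a single feasible perturbation on which $\OBJ$ strictly improves, so the only delicate point---and the main (if mild) obstacle---is choosing the target set correctly. Enlarging $B$ to precisely $B \cup \{c\}$ is what forces the first-order change in $\E$ to be $-\eps$ times the total mass on sets that contain $c$ yet are disjoint from $B$, which is exactly the quantity the contradiction hypothesis kills. Note that $A$ itself plays no role in the perturbation; it appears in the statement only to supply the element $c \in [q] \setminus B$.
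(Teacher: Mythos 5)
Your proof is correct and follows essentially the same route as the paper: the paper's proof replaces $B$ by $B\cup\{c\}$ outright (shifting the entire mass $\alpha_B$ rather than an $\eps$), and verifies exactly as you do that $\V$ is preserved, $\E$ is unchanged because no support set is disjoint from $B$ yet contains $c$, and $\OBJ$ strictly increases by a factor $\log\frac{|B|+1}{|B|}$, contradicting optimality.
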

\begin{proof}
Suppose, towards contradiction, that there exists $c\in A\setminus B$ such that
no set $C$ in the support of $\valpha$ is disjoint from $B$ and contains $c$.
Let $\valpha'$ be the vector obtained from $\valpha$ by replacing
$B$ with $B\cup \{c\}$, i.e., $\alpha'_B := 0$, $\alpha'_{B\cup \{c\}} =
\alpha_{B\cup \{c\}} + \alpha_B$, and $\alpha'_X = \alpha_X$ for all
$X\not\in\{ B, B\cup \{c\}\}$. Clearly $\V(\alpha')=\V(\alpha) = 1$.
Moreover $\E(\valpha') = \E(\valpha)$, as every set in
the support of $\valpha$ that intersects $B\cup \{c\}$ must also intersect $B$.
But $\OBJ(\valpha') = \OBJ(\valpha) + \alpha_B \log\frac{|B|+1}{|B|}
> \OBJ(\valpha)$, a contradiction.
\end{proof}

\section{The structure of the support graph}
\label{sec:structure}
Let $\valpha\in\FEAS(s)$. The \emph{support graph} of $\valpha$, denoted by
$\SUPP(\valpha)$, is the graph defined over the support of $\valpha$ (that is,
those sets $A$ for which $\alpha_A>0$) whose edges are formed by
connecting pairs of disjoint sets. We will investigate the structure of
$\SUPP(\valpha)$ for all solutions $\valpha$ to $\OPT(s)$ in this section.

We define two classes of support graphs as follows.
Let $\mc{P}_k$ be the class of all $\SUPP(\valpha)$ for which
the support of $\valpha$ forms a $k$-partition of
$[q]$. Let $\mc{Q}_k$ be the class of all $\SUPP(\valpha)$ for
which the support of $\valpha$ consists of a $k$-partition
$A_1, \ldots, A_{k-1}, A_{k}$ of $[q]$ together with the set
$A_1\cup A_2$. We write $\mc{P}:=\cup_{k}\mc{P}_k$ and
$\mc{Q}:= \cup_{k}\mc{Q}_k$. In this section, we show
\begin{theorem}

  \label{thm:structure}
  For integer $q$ and real $1<s\le q$, all solutions $\valpha$ to
  $\OPT(s)$ are such that $\SUPP(\valpha)$ is in either
  $\cup_{\cei{s}\le k\le q}\mc{P}_k$ or $\mc{Q}_\cei{s}$. And when
  $\cei{s}<q$, we have $\SUPP(\valpha)\notin \mc{P}_q$.

\end{theorem}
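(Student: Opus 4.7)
My plan is to proceed via the Karush--Kuhn--Tucker (KKT) conditions of \secref{subsec:KKT}, combined with the structural information in \propref{prop:tight}, \propref{prop:color_cover}, and \propref{prop:supp_inter}. After verifying LICQ at a solution $\valpha$ (the equality $\V=1$, the binding constraint $\E(\valpha)=\frac{s-1}{2s}$ supplied by \propref{prop:tight}, and the active $\alpha_A=0$ constraints have linearly independent gradients after a routine check), I apply KKT to $\OPT(s)$ to obtain constants $\lambda \in \R$ and $\mu \ge 0$ with
\[
  \log |A| \;=\; \lambda - \mu\,\beta_A, \qquad \beta_A := \sum_{C\,:\,C\cap A = \emptyset} \alpha_C, \quad \text{for every } A \in \text{supp}(\valpha).
\]
The degenerate case $\mu=0$ forces all support sets to have the same cardinality and, combined with \propref{prop:color_cover} and \propref{prop:supp_inter}, reduces quickly to a uniform partition in some $\mc{P}_k$. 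Henceforth I assume $\mu>0$, so that on the support $|A|$ is a strictly monotone function of $\beta_A$; this is the central combinatorial handle.

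Next I extract the partition structure in two stages. Let $\mc{M}$ denote the inclusion-maximal sets of the support. Coverage $\bigcup\mc{M}=[q]$ is immediate from \propref{prop:color_cover}. For pairwise disjointness, suppose $M_1,M_2\in\mc{M}$ intersect properly; then \propref{prop:supp_inter} supplies, for each $c\in M_1\setminus M_2$, a support set $C\subseteq[q]\setminus M_2$ containing $c$ (and symmetrically). I would then build a small perturbation redistributing weight among $M_1,M_2,M_1\cup M_2$ and (to cancel the first-order change in the quadratic $\E$) the auxiliary sets $C$, designed to preserve $\V$ and $\E$ to first order while strictly raising $\OBJ$: the KKT identity dictates the exchange rates on the $C$'s that zero out the change in $\E$, and $|M_1\cup M_2|>\max\{|M_1|,|M_2|\}$ forces a strict gain in $\OBJ$, contradicting optimality. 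Hence $\mc{M}$ partitions $[q]$ into blocks, say $M_1,\ldots,M_r$.

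Any non-maximal $B\in\text{supp}(\valpha)$ then lies inside a unique block $M_i$. Applying \propref{prop:supp_inter} to pairs of non-maximal sets inside $M_i$ and to each non-maximal set against $M_i$, together with the KKT identity (which pins $|B|$ down from $\beta_B$), I would argue that within each $M_i$ the non-maximal support sets form a partition of $M_i$, that at most one block $M_i$ hosts such sets, and that this internal partition has exactly two parts, yielding the $\mc{Q}_k$ structure. To bound $|\mc{M}|\ge\cei{s}$ in the $\mc{P}$ case I use $\E(\valpha)=\frac{s-1}{2s}$: expanding gives $\sum_i\alpha_{M_i}^{\,2}\le 1/s$, and Cauchy--Schwarz gives $\sum_i\alpha_{M_i}^{\,2}\ge 1/|\mc{M}|$, so $|\mc{M}|\ge s$; the $\mc{Q}$ case is analogous. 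The exclusion of $\mc{P}_q$ when $\cei{s}<q$ is then easy: its support consists of singletons so $\OBJ(\valpha)=0$, whereas a balanced $\cei{s}$-partition with uniform weights $1/\cei{s}$ lies in $\FEAS(s)$ and achieves $\OBJ>0$, contradicting optimality.

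The main obstacle will be the disjointness argument for maximal elements. The naive two-body exchange among $M_1,M_2$ and $M_1\cup M_2$ strictly decreases $\E$ precisely because $\beta_{M_1\cup M_2}<\min\{\beta_{M_1},\beta_{M_2}\}$ whenever the auxiliary sets $C$ from \propref{prop:supp_inter} exist; overcoming this requires a carefully chosen three- or four-parameter family of shifts, using the KKT identity itself to compute the compensating rates on the $C$'s, together with a case analysis of how each $C$ meets $M_1\cap M_2$ and $M_1\setminus M_2$. Isolating the right linear combination of perturbation directions that simultaneously cancels the quadratic change in $\E$ and strictly raises $\OBJ$ is where I expect the bulk of the technical effort to go.
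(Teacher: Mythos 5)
Your outline is a plan rather than a proof, and the places you defer are exactly where the real content of this theorem lies. The key structural claim --- that the inclusion-maximal support sets are pairwise disjoint --- is not established: you yourself note that the naive exchange among $M_1$, $M_2$, $M_1\cup M_2$ fails and that a multi-parameter perturbation ``where I expect the bulk of the technical effort to go'' would be needed, but no such perturbation is produced. In the paper this is not done by first-order reasoning at all: the proof runs through \propref{prop:3_eigenvalues}, a second-order (spectral) perturbation argument showing that no induced subgraph of $\SUPP(\valpha)$ can have three positive eigenvalues, then excludes $3K_1$, $C_4$, $C_5$, induced matchings of size two and certain induced paths (Steps 1--3), and invokes Petrovi\'c's classification of graphs with at most two nonnegative eigenvalues (\lemref{lem:bk}) to force the $\mc{P}\cup\mc{Q}$ shape. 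A first-order KKT identity $\log|A|=\lambda-\mu\beta_A$ cannot substitute for this: the directions used in \propref{prop:3_eigenvalues} are orthogonal to $\nabla\OBJ$, $\nabla\V$ and $\nabla\E$, so the obstruction is genuinely second-order. Moreover, your ``routine check'' of LICQ is false at the most relevant points: at any solution with $\beta_A$ constant on the support (e.g.\ the balanced partition for integer $s$) the gradients of the active constraints are linearly dependent, which is why the paper proves the weaker linear-dependence statement \propref{prop:lin_dep} directly by a perturbation argument instead of quoting KKT for the full problem, and applies KKT only to restricted subproblems (Step 4, \lemref{lem:fixed_Qk}, \lemref{lem:fixed_Pk}) where a constraint qualification can actually be verified.

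Two further gaps are specific and consequential. First, your Cauchy--Schwarz remark in the $\mc{Q}$ case only gives a lower bound on the number of parts, but the theorem asserts the support is in $\mc{Q}_{\cei{s}}$ exactly; ruling out $\mc{Q}_{k+1}$ for $k\ge\cei{s}$ is the content of \propref{prop:step_5}, which requires the explicit multiplier formulas of \lemref{lem:fixed_Qk} and \lemref{lem:fixed_Pk} to show that any such candidate is strictly beaten by a vector with support in $\mc{P}_k$. Nothing in your sketch performs this comparison, nor the Step 4 argument (an AM--GM computation) showing the ``star'' has exactly two petals and hence at most one block carries exactly two non-maximal sets. Second, the theorem is about \emph{all} solutions, and necessary first-order conditions plus \propref{prop:supp_inter} do not obviously exclude degenerate optima with messy supports along which the objective is flat; this is precisely why the paper needs Step 7 (Propositions~\ref{prop:cant_walk_3K1}, \ref{prop:cant_walk_C5plus}, \ref{prop:cant_walk_C4}), which tracks the reduction path of Step 1 backwards and shows it cannot terminate in $\mc{P}\cup\mc{Q}$ if it started elsewhere. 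Your plan has no mechanism playing this role. The parts you do carry out correctly (coverage via \propref{prop:color_cover}, the Cauchy--Schwarz bound $k\ge\cei{s}$ in the $\mc{P}$ case, and the exclusion of $\mc{P}_q$ when $\cei{s}<q$ because $\OBJ=0$ is suboptimal) coincide with the easy parts of the paper's argument.
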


\noindent This result tells us about the structure of the support graph for
a solution to $\OPT(s)$ and will play critical role in solving the relevant
instances of $\OPT(s)$. One can check that \thmref{thm:structure} together
with \corref{cor:stability-LPS} readily implies our main structural result
\thmref{thm:structure-graphs}. The proof of \thmref{thm:structure} will be
divided into seven steps, outlined below.

\medskip

\Step[Step 1] There exists a solution $\valpha$ to $\OPT(s)$ such
that none of the graphs $3K_1$, $C_4$ and $C_5$ appear as induced subgraphs of
$\SUPP(\valpha)$ (see \figref{fig:forb_graphs});

\begin{figure}[ht!]
\centering
\parbox{1.5in}{%
\centering
\begin{tikzpicture}
  [scale=0.8, auto=left, every node/.style={circle, draw, fill=black!50,
   inner sep=0pt, minimum width=4pt}]   \node (n1) at (90:1 cm) {};
  \node (n2) at (210:1 cm) {};
  \node (n3) at (-30:1 cm) {};
\end{tikzpicture}
\caption*{$3K_1$}
}
\parbox{1.5in}{%
\centering
\begin{tikzpicture}
  [scale=0.8, auto=left, every node/.style={circle, draw, fill=black!50,
   inner sep=0pt, minimum width=4pt}]
  \node (n1) at (45:1 cm) {};
  \node (n2) at (135:1 cm) {};
  \node (n3) at (-135:1 cm) {};
  \node (n4) at (-45:1 cm) {};

  \foreach \from/\to in {n1/n2, n2/n3, n3/n4, n4/n1}
    \draw (\from) -- (\to);
\end{tikzpicture}
\caption*{$C_4$}
}
\parbox{1.5in}{%
\centering
\begin{tikzpicture}
  [scale=0.8, auto=left, every node/.style={circle, draw, fill=black!50,
   inner sep=0pt, minimum width=4pt}]
  \node (n1) at (18:1 cm) {};
  \node (n2) at (90:1 cm) {};
  \node (n3) at (162:1 cm) {};
  \node (n4) at (234:1 cm) {};
  \node (n5) at (306:1 cm) {};

  \foreach \from/\to in {n1/n2, n2/n3, n3/n4, n4/n5, n5/n1}
    \draw (\from) -- (\to);
\end{tikzpicture}
\caption*{$C_5$}
}
\caption{Forbidden graphs.}
\label{fig:forb_graphs}
\end{figure}

\Step[Step 2] For any solution $\valpha$ to $\OPT(s)$, if $\SUPP(\valpha)$
does not contain an induced copy of $3K_1$, then it also does not contain
an induced matching of size $2$;

\medskip

\Step[Step 3] For any solution $\valpha$ to $\OPT(s)$, there exist
no four subsets $A, B, C$ and $D$ in the support of $\valpha$ such that
they induce a path $A-B-C-D$ in $\SUPP(\valpha)$ and $|A|>|C|$, $|D|>|B|$;

\medskip

\Step[Step 4] For any solution $\valpha$ to $\OPT(s)$, if $\SUPP(\valpha)$
can be obtained from a clique by removing a star (a collection of edges sharing
a common endpoint), then the star must have exactly two edges, and
$\SUPP(\valpha)\in\mc{Q}$;

\medskip

\Step[Step 5] For any solution $\valpha$ to $\OPT(s)$, if
$\SUPP(\valpha)\in\mc{Q}$ then in fact $\SUPP(\valpha)\in
\mc{Q}_\cei{s}$;

\medskip

\Step[Step 6] For any $\valpha$ from Step 1, it is true that
$\SUPP(\valpha)\in \lt(\cup_{k\ge\cei{s}}\mc{P}_k\rt)\cup
\mc{Q}_\cei{s}$.

\medskip

\Step[Step 7] All solutions to $\OPT(s)$ are such that $\SUPP(\valpha)$
is in either $\cup_{\cei{s}\le k\le q}\mc{P}_k$ or $\mc{Q}_\cei{s}$.

\medskip

\noindent We will show the proofs of the above steps in the forthcoming
subsections.

\subsection{Step 1: excluding graphs with 3 nonnegative eigenvalues}
We first prove the following proposition which shall be frequently used
throughout the section.

\begin{proposition}

  \label{prop:lin_dep}
  Let $\valpha$ be a solution to $\OPT(s)$ and let
  $A_1,A_2,\ldots,A_k \subseteq [q]$ be vertices in $\SUPP(\valpha)$.
  Let $\mathbf{1}, \mathbf{g},\vbeta\in \R^k$ be defined as $\textup{1}_i:=1$,
  $\beta_i :=\sum_{X\cap A_i=\emptyset} \alpha_X$,
  and $\textup{g}_i:=\log|A_i|$, for $i=1,2,\ldots,k$.
  Then the vectors $\mathbf{1}, \mathbf{g}, \vbeta$ are linearly
  dependent over $\R$.

\end{proposition}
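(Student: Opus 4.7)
The plan is to apply a direct Lagrange/KKT-style perturbation argument, restricted to perturbations supported on the $k$ coordinates indexed by $A_1,\ldots,A_k$. Since $\valpha$ is a solution of $\OPT(s)$, \propref{prop:tight} gives $\E(\valpha)=\frac{s-1}{2s}$, so the edge-density constraint is active; meanwhile the non-negativity constraints $\alpha_{A_i}\ge 0$ are strictly inactive since $A_i\in\SUPP(\valpha)$, so any sufficiently small perturbation of only these $k$ coordinates stays in the positive orthant and automatically respects all non-negativity constraints.

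For $\mathbf{v}=(v_1,\ldots,v_k)\in\R^k$ of small norm, define $\valpha'$ by $\alpha'_{A_i}=\alpha_{A_i}+v_i$ and $\alpha'_B=\alpha_B$ otherwise. A direct expansion, using that $\partial\E/\partial\alpha_{A_i}=\sum_{X\cap A_i=\emptyset}\alpha_X=\beta_i$, yields
\[
\V(\valpha')=1+\mathbf{1}\cdot\mathbf{v},\qquad
\E(\valpha')=\tfrac{s-1}{2s}+\vbeta\cdot\mathbf{v}+O(|\mathbf{v}|^2),\qquad
\OBJ(\valpha')=\OBJ(\valpha)+\mathbf{g}\cdot\mathbf{v}.
\]
Thus $\valpha'\in\FEAS(s)$ iff $\mathbf{1}\cdot\mathbf{v}=0$ and $\vbeta\cdot\mathbf{v}+O(|\mathbf{v}|^2)\ge 0$, and the maximality of $\valpha$ forces $\mathbf{g}\cdot\mathbf{v}\le 0$ for every such $\mathbf{v}$.

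Fixing a direction $\mathbf{v}$ and replacing it by $t\mathbf{v}$ for $t\to 0^+$, this reduces to the clean implication: whenever $\mathbf{1}\cdot\mathbf{v}=0$ and $\vbeta\cdot\mathbf{v}>0$, we have $\mathbf{g}\cdot\mathbf{v}\le 0$; by continuity this extends to $\vbeta\cdot\mathbf{v}\ge 0$. Farkas's lemma then produces $z\in\R$ and $y\ge 0$ such that $\mathbf{g}=z\mathbf{1}-y\vbeta$, giving the nontrivial linear dependence $\mathbf{g}-z\mathbf{1}+y\vbeta=\mathbf{0}$ of $\mathbf{1}$, $\mathbf{g}$, $\vbeta$.

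I do not anticipate a serious obstacle: the only minor subtlety is the degenerate case in which $\vbeta$ is already a scalar multiple of $\mathbf{1}$ (equivalently, the Lagrange regularity condition fails for the pair $\V,\E$ on the chosen subspace), but in that case $\mathbf{1}$ and $\vbeta$ are already linearly dependent and the conclusion is immediate. The only step requiring care is the treatment of the quadratic remainder $O(|\mathbf{v}|^2)$ in the inequality constraint, which is handled precisely by the scaling-and-continuity step above.
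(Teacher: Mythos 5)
Your proof is correct, and its analytic core --- perturbing only the $k$ coordinates $\alpha_{A_1},\ldots,\alpha_{A_k}$, using that $\V$ and $\OBJ$ are linear with gradients $\mathbf{1}$ and $\mathbf{g}$, and that the first-order term of $\E$ in such a perturbation is $\vbeta\cdot\mathbf{v}$ --- is exactly the paper's. Where you differ is in how the linear-algebraic conclusion is extracted. The paper argues by contraposition: if $\mathbf{1},\mathbf{g},\vbeta$ were linearly independent, there would exist $\vgamma$ with $\mathbf{1}\cdot\vgamma=0$ and $\mathbf{g}\cdot\vgamma=\vbeta\cdot\vgamma=1$, and then the single perturbation $\valpha+\eps\vgamma$ stays in $\FEAS(s)$ (the constraint $\E\ge\frac{s-1}{2s}$ only improves to first order) while strictly increasing $\OBJ$ --- a one-line contradiction that needs neither \propref{prop:tight}, nor the $t\to 0^+$ scaling and continuity step, nor the degenerate case in which $\vbeta$ is a multiple of $\mathbf{1}$, nor Farkas. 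You instead derive the first-order condition that $\mathbf{1}\cdot\mathbf{v}=0$ and $\vbeta\cdot\mathbf{v}\ge 0$ force $\mathbf{g}\cdot\mathbf{v}\le 0$, and close with Farkas's lemma; this is sound (the cone is polyhedral, so no closure issues, and you correctly dispose of the case $\vbeta$ parallel to $\mathbf{1}$), and it buys slightly more than the statement: the signed representation $\mathbf{g}=z\mathbf{1}-y\vbeta$ with $y\ge 0$, i.e.\ genuine KKT multipliers of the kind the paper only extracts later (compare \lemref{lem:fixed_Qk} and \lemref{lem:fixed_Pk}). The trade-off is economy: the paper's contrapositive obtains the bare linear dependence from the same perturbation with no duality machinery at all.
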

\begin{proof}
Suppose, towards contradiction, that $\mathbf{1}$, $\mathbf{g}$, and
$\vbeta$ are linearly independent over $\R$. Then there exists a vector
$\vgamma\in \R^k$ such that $\mathbf{1}\cdot \vgamma = 0$, while
$\mathbf{g}\cdot \vgamma = \vbeta\cdot \vgamma = 1$, where $(\cdot)$
denotes the standard inner product in $\R^k$. Let $\valpha'$ be
obtained from $\valpha$ by replacing $\alpha'_{A_i} :=
\alpha_{A_i} + \eps \cdot \gamma_i$ for $i=1,\ldots, k$,
where $\eps>0$ is sufficiently small. Clearly $\valpha' \ge 0$ when
$\eps$ is sufficiently small and $\V(\valpha') = 1$, as
$\mathbf{1}\cdot \vgamma = 0$. We also have
\[
  \E(\valpha') = \E(\valpha) + \eps + O(\eps^2)
  \Text{and}\OBJ(\valpha') = \OBJ(\valpha) + \eps,
\]
therefore when $\eps$ is sufficiently small, $\valpha'\in \FEAS(s)$
and $\OBJ(\valpha') > \OBJ(\valpha)$, a contradiction that establishes
the proposition.
\end{proof}

We say $\lambda$ is an \emph{eigenvalue} of a graph $G$ when $\lambda$
is an eigenvalue of its adjacency matrix.
The following proposition is the main ingredient in the proof of Step 1.

\begin{proposition}

  \label{prop:3_eigenvalues}
  Let $\valpha$ be a solution to $\OPT(s)$, let
  $A_1,A_2,\ldots,A_k \subseteq [q]$ be vertices in $\SUPP(\valpha)$,
  and let $H$ be the subgraph of $\SUPP(\valpha)$ induced by
  these vertices. Then $H$ has at most two positive eigenvalues.
  Moreover, if $H$ has at least three nonnegative eigenvalues, then
  there exists another solution $\valpha'$ to $\OPT(s)$ such that
  $\SUPP(\valpha')$ is a strictly smaller induced subgraph of
  $\SUPP(\valpha)$. Furthermore $\valpha$ and $\valpha'$ differ only
  in the coordinates $A_1,A_2,\ldots, A_k$ and the segment that
  joins $\valpha$ and $\valpha'$ is entirely contained in $\FEAS(s)$.

\end{proposition}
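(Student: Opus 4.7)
The plan is to extend the Lagrangian-style perturbation of Proposition~\ref{prop:lin_dep} to second order, in directions supported on the coordinates $A_1,\dots,A_k$. For $\vgamma\in\R^k$ (extended by zeros to the remaining coordinates) and $t\in\R$, a routine expansion gives
\[
  \V(\valpha+t\vgamma) = 1 + t\,\vgamma\cdot\mathbf{1},\qquad
  \OBJ(\valpha+t\vgamma) = \OBJ(\valpha) + t\,\vgamma\cdot\mathbf{g},
\]
\[
  \E(\valpha+t\vgamma) = \E(\valpha) + t\,\vgamma\cdot\vbeta + \tfrac{t^{2}}{2}\,\vgamma^{T} M\,\vgamma,
\]
where $M\in\{0,1\}^{k\times k}$ is the adjacency matrix of $H$ (the coefficient of $\gamma_{i}\gamma_{j}$ is $1$ exactly when $A_{i}\cap A_{j}=\emptyset$ and $i\neq j$). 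Proposition~\ref{prop:lin_dep} guarantees that $\mathbf{1},\vbeta,\mathbf{g}\in\R^{k}$ are linearly dependent, so they span a subspace $W\subseteq\R^{k}$ with $\dim W\le 2$.

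To prove $H$ has at most two positive eigenvalues I argue by contradiction: if $M$ has three positive eigenvalues, let $V\subseteq\R^{k}$ be the span of three corresponding (orthogonal) eigenvectors, so $\dim V = 3$ and $\vgamma^{T} M \vgamma > 0$ for every nonzero $\vgamma\in V$. Since $\dim(V\cap W^{\perp})\ge 3-\dim W\ge 1$, one can choose $\vgamma\neq 0$ in $V$ with $\vgamma\cdot\mathbf{1}=\vgamma\cdot\vbeta=\vgamma\cdot\mathbf{g}=0$. As $\alpha_{A_{i}}>0$ for each $i$, the point $\valpha+t\vgamma$ is nonnegative for all small $|t|$ and preserves both $\V$ and $\OBJ$ exactly, while $\E(\valpha+t\vgamma)>\E(\valpha)=\frac{s-1}{2s}$. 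This produces another solution of $\OPT(s)$ at which $\E$ is strictly greater than $\frac{s-1}{2s}$, contradicting Proposition~\ref{prop:tight}.

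For the ``moreover'' statement, suppose instead that $H$ has at least three nonnegative eigenvalues, and let $V$ be the span of three corresponding eigenvectors, so $\vgamma^{T} M \vgamma \ge 0$ on all of $V$. The same dimension count yields $\vgamma\neq 0$ in $V$ orthogonal to $\mathbf{1},\vbeta,\mathbf{g}$. Since $\vgamma\cdot\mathbf{1}=0$ and $\vgamma\neq 0$, $\vgamma$ has entries of both signs; replacing $\vgamma$ by $-\vgamma$ if necessary, set $t^{*}:=\min\bigl\{\alpha_{A_{i}}/(-\gamma_{i})\colon\gamma_{i}<0\bigr\}>0$ and define $\valpha':=\valpha+t^{*}\vgamma$. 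By the choice of $t^{*}$, $\valpha'$ is nonnegative with at least one additional zero coordinate, and it satisfies $\V(\valpha')=1$, $\OBJ(\valpha')=\OPT(s)$, and $\E(\valpha')\ge\frac{s-1}{2s}$. Hence $\valpha'$ is again a solution of $\OPT(s)$ whose support is strictly contained in the support of $\valpha$, it differs from $\valpha$ only in the coordinates $A_{1},\dots,A_{k}$, and $\SUPP(\valpha')$ is a strictly smaller induced subgraph of $\SUPP(\valpha)$. Applying the same feasibility check to $\valpha+t'\vgamma$ for every $t'\in[0,t^{*}]$ (coordinates nonnegative by the choice of $t^{*}$, $\V$ preserved, and $\E(\valpha+t'\vgamma)\ge\E(\valpha)$ because the quadratic term is $\ge 0$) shows the entire segment lies in $\FEAS(s)$.

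The only substantive step is the dimension bookkeeping that produces a direction $\vgamma$ simultaneously annihilating $\mathbf{1},\vbeta,\mathbf{g}$ while lying in a three-dimensional eigen-subspace on which $\vgamma^{T} M \vgamma$ has a definite sign; once $\vgamma$ is in hand, everything else reduces to verifying feasibility of $\valpha+t\vgamma$, where the distinction between ``positive'' and ``nonnegative'' eigenvalues of $M$ translates precisely into ``strict'' versus ``nonstrict'' increase of $\E$ along the perturbation.
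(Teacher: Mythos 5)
Your proof is correct and follows essentially the same route as the paper: invoke Proposition~\ref{prop:lin_dep} to bound $\dim\,\textup{span}\{\mathbf{1},\mathbf{g},\vbeta\}$ by two, pick a direction $\vgamma$ in the span of eigenvectors with positive (resp.\ nonnegative) eigenvalues orthogonal to all three, and perturb along $\vgamma$, using \propref{prop:tight} for the contradiction in the positive case. Your explicit choice of $t^{*}$ (justified by $\vgamma\cdot\mathbf{1}=0$ forcing entries of both signs) is just a more careful write-up of the paper's ``choose $\eps$ appropriately to kill one coordinate,'' so the arguments coincide.
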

\begin{proof}
Let $\mathbf{1}$, $\mathbf{g}$, and $\vbeta$ be as such as
in \propref{prop:lin_dep} for the vertices $A_1,\ldots, A_k$.
Because $\mathbf{1}$, $\mathbf{g}$, and $\vbeta$ are linearly dependent,
we have $\dim(\textup{span}\{\mathbf{1}, \mathbf{g}, \vbeta\})\le 2$.
Let $M$ denote the $k\times k$ adjacency matrix of $H$.
Let $W\subseteq \R^k$ be the subspace spanned by the eigenvectors
of $M$ associated with nonnegative eigenvalues. Because $M$ is symmetric,
if $H$ has at least 3 nonnegative eigenvalues, then $\dim(W)\ge 3$, so
there exists a vector $\vgamma\in W$ perpendicular to the vectors
$\mathbf{1}$, $\mathbf{g}$, and $\vbeta$. Since $\vgamma\in W$, we must
have $\vgamma^T \cdot M \cdot \vgamma \ge 0$. Let $\valpha'$ be
obtained from $\valpha$ by replacing $\alpha'_{A_i} :=
\alpha_{A_i} + \eps \cdot \gamma_i$, where $\eps>0$
will be chosen later. If $\eps$ is sufficiently small, we
have $\valpha'\ge 0$. Moreover, $V(\valpha') = 1$, and
\[
  \E(\valpha') = \E(\valpha) + \eps^2 \vgamma^T\cdot M \cdot \vgamma
  \ge \E(\valpha)  \Text{and}\OBJ(\valpha') = \OBJ(\valpha).
\]
Thus we must conclude that $\vgamma^T\cdot M \cdot \vgamma = 0$, otherwise
we would get a solution $\valpha'$ to $\OPT(s)$ with $\E(\valpha')>
\frac{s-1}{2s}$, contradicting \propref{prop:tight}. In particular,
$H$ does not have 3 positive eigenvalues (we could just repeat the same
argument replacing $W$ with the subspace spanned by the eigenvectors of $M$
associated with the 3 positive eigenvalues).
Clearly the segment joining $\valpha$ and $\valpha'$ is
entirely contained in $\FEAS(s)$, and by choosing $\eps$ appropriately,
we can make one extra coordinate of $\valpha'$ to be zero, thereby reducing
the size of the support of the solution, and concluding the proof.
\end{proof}

From the proof of \propref{prop:3_eigenvalues}, we remark the following.
\begin{observation}

  \label{obs:obs_kernel}
  Let $H$, $\mathbf{1}$, $\mathbf{g}$, and $\vbeta$ be such as in 
  \propref{prop:3_eigenvalues} and its proof.
  If $H$ has at least three nonnegative eigenvalues,
  then there exists a vector $\vgamma$ in the kernel of the adjacency
  matrix of $H$ such that $\vgamma$ is perpendicular to the vectors
  $\mathbf{1}$, $\mathbf{g}$, and $\vbeta$.

\end{observation}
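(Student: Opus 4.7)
The plan is to leverage exactly the construction already carried out in the proof of \propref{prop:3_eigenvalues} and upgrade its conclusion from $\vgamma^T M \vgamma = 0$ to $M \vgamma = 0$. Recall that there, assuming $H$ has at least three nonnegative eigenvalues, one sets $W$ to be the subspace of $\R^k$ spanned by the eigenvectors of the adjacency matrix $M$ of $H$ associated with nonnegative eigenvalues, observes that $\dim(W) \ge 3$, and picks a nonzero vector $\vgamma \in W$ perpendicular to $\mathbf{1}$, $\mathbf{g}$, and $\vbeta$. The same perturbation argument together with \propref{prop:tight} then forces $\vgamma^T M \vgamma = 0$. So the only thing I need to add is the passage from this quadratic-form equality to membership in the kernel of $M$.

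This last step is a short linear algebra observation. Since $M$ is symmetric, I would fix an orthonormal eigenbasis $v_1, \ldots, v_k$ with real eigenvalues $\lambda_1, \ldots, \lambda_k$, ordered so that the first few are the nonnegative ones; then $W = \textup{span}\{v_i : \lambda_i \ge 0\}$. Writing $\vgamma = \sum_{i : \lambda_i \ge 0} c_i v_i$, I get
\[
  0 = \vgamma^T M \vgamma = \sum_{i : \lambda_i \ge 0} \lambda_i c_i^2,
\]
which, because every summand is nonnegative, forces $c_i = 0$ whenever $\lambda_i > 0$. Hence $\vgamma$ lies in the span of eigenvectors with eigenvalue exactly $0$, which is precisely the kernel of $M$. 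In particular $M\vgamma = 0$, while the orthogonality relations $\vgamma \cdot \mathbf{1} = \vgamma \cdot \mathbf{g} = \vgamma \cdot \vbeta = 0$ are preserved from the construction.

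I do not anticipate a real obstacle here: the observation is essentially a bookkeeping remark that records what the proof of \propref{prop:3_eigenvalues} actually produced, not just what it used. The only thing to be careful about is to make sure the $\vgamma$ chosen inside $W$ and orthogonal to $\mathbf{1}, \mathbf{g}, \vbeta$ can be taken nonzero, but this is guaranteed by $\dim(W) \ge 3$ together with the fact that $\mathbf{1}, \mathbf{g}, \vbeta$ span a subspace of dimension at most $2$ by \propref{prop:lin_dep}, so their orthogonal complement meets $W$ nontrivially.
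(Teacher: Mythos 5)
Your proposal is correct and matches the paper's (implicit) argument: the paper presents this observation as a direct remark on the proof of \propref{prop:3_eigenvalues}, where the chosen $\vgamma\in W$ orthogonal to $\mathbf{1},\mathbf{g},\vbeta$ is forced to satisfy $\vgamma^T M\vgamma=0$, and the spectral decomposition step you supply (all coefficients on strictly positive eigenvalues must vanish, so $\vgamma$ lies in the $0$-eigenspace, i.e.\ the kernel of $M$) is exactly the intended bookkeeping, including your justification that $\vgamma$ can be taken nonzero since $\dim(W)\ge 3$ while $\mathbf{1},\mathbf{g},\vbeta$ span a subspace of dimension at most $2$.
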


Let $C_5^+$ be the 5-vertex graph obtained from $C_5$ by adding an edge.
It is easy to verify that the eigenvalues of $3K_1$ are $0$,$0$,
and $0$; the eigenvalues of $C_4$ are $2$, $0$, $0$,
and $-2$; the eigenvalues of $C_5$ are $2$, $2\cos\frac{2\pi}{5}$,
$2\cos\frac{2\pi}{5}$, $2\cos\frac{4\pi}{5}$, and $2\cos\frac{4\pi}{5}$;
and the eigenvalues of $C_5^+$ are $\lambda_1,\lambda_2,0,\lambda_3$, and $-2$,
where $\lambda_1\approx 2.48$, $\lambda_2\approx 0.69$,
$\lambda_3\approx -1.17$ are the roots to the equation
$\lambda^3-2\lambda^2-2\lambda+2=0$.
We remark that these graphs have three nonnegative eigenvalues each,
and that $C_5^+$ contains $C_4$ as an induced subgraph.
The following proposition finishes the proof of Step 1.

\begin{proposition}

  \label{prop:step_1}
  No solution $\valpha$ to $\OPT(s)$ has $C_5$ as induced subgraph
  of $\SUPP(\valpha)$. Furthermore, there exists a solution $\valpha$ to
  $\OPT(s)$ such that $\SUPP(\valpha)$ does not contain induced
  copies of $3K_1$, $C_5^{+}$ and $C_4$.

\end{proposition}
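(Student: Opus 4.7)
The plan is to deduce both parts directly from \propref{prop:3_eigenvalues} by examining the spectra of the four forbidden graphs. Recall the eigenvalues listed just before the proposition: $C_5$ has three \emph{positive} eigenvalues $2, 2\cos\frac{2\pi}{5}, 2\cos\frac{2\pi}{5}$, whereas $3K_1$, $C_4$, and $C_5^+$ each have three nonnegative eigenvalues but only at most two positive ones. The essential observation is that $C_5$ is ruled out by a spectral impossibility, while the other three are ruled out only up to the freedom of choosing a particular solution (i.e.\ ``there exists'').

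First I would prove that no solution $\valpha$ to $\OPT(s)$ has $C_5$ as an induced subgraph of $\SUPP(\valpha)$. Suppose it did, and let $H$ be the induced copy of $C_5$ on five vertices $A_1,\dots,A_5$ of $\SUPP(\valpha)$. By \propref{prop:3_eigenvalues} applied to $H$, the graph $H$ has at most two positive eigenvalues. But $C_5$ has three positive eigenvalues, a contradiction. This takes care of the first assertion for every solution at once.

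For the second assertion, I would pick among all solutions to $\OPT(s)$ one, call it $\valpha$, whose support $\{A : \alpha_A > 0\}$ has the smallest possible cardinality (such a minimum exists since supports are finite subsets of the nonempty subsets of $[q]$). Suppose for contradiction that $\SUPP(\valpha)$ contains an induced copy of $3K_1$, $C_4$, or $C_5^+$, on some vertices $A_1,\dots,A_k$. Each of these three graphs has at least three nonnegative eigenvalues, so the second part of \propref{prop:3_eigenvalues} applies and yields a solution $\valpha'$ to $\OPT(s)$ whose support is a strict subset of that of $\valpha$. This contradicts the minimality of the support of $\valpha$, so no such induced subgraph can occur. (One could also observe that since $C_5^+$ contains $C_4$ as an induced subgraph, forbidding induced $C_4$ automatically forbids induced $C_5^+$, but the uniform argument via \propref{prop:3_eigenvalues} covers all three cases equally well.)

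No step is really an obstacle here; the only thing to verify carefully is the eigenvalue count for $C_5$ (three strictly positive eigenvalues) versus that for $3K_1$, $C_4$, and $C_5^+$ (three nonnegative but not all positive), which is exactly what distinguishes the stronger ``no solution'' conclusion from the weaker ``some solution'' conclusion. All the actual work has already been packaged into \propref{prop:3_eigenvalues}.
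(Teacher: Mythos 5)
Your proof is correct. The first part is exactly the paper's argument: an induced $C_5$ would contradict the ``at most two positive eigenvalues'' conclusion of \propref{prop:3_eigenvalues}, since $C_5$ has three positive eigenvalues. For the second part you take a somewhat different route: you pick a solution of minimum support cardinality and note that an induced $3K_1$, $C_4$ or $C_5^+$ (each having three nonnegative eigenvalues) would, via \propref{prop:3_eigenvalues}, yield a solution with strictly smaller support, contradicting minimality. The paper instead starts from an arbitrary solution and repeatedly applies \propref{prop:3_eigenvalues} to remove such induced copies, with a fixed removal priority $3K_1$, then $C_5^+$, then $C_4$, the process terminating because the support shrinks at each application. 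Both arguments establish the proposition, and yours is arguably the cleaner way to get the bare existence statement. What the paper's iterative construction buys, though, is extra information not recorded in the statement but used later in Step 7 (the proof of \thmref{thm:structure}): the clean solution $\valpha'$ is joined to the original solution by a piecewise linear path inside $\FEAS(s)$, whose last segment is an application of \propref{prop:3_eigenvalues} with $H$ isomorphic to $3K_1$, $C_5^+$ or $C_4$, and the stated priority guarantees that when $H\cong C_4$ the support graph contains no induced $3K_1$ or $C_5^+$ (as needed in \propref{prop:cant_walk_C4}). If one substituted your minimality proof here, that path-and-priority construction would still have to be supplied in Step 7.
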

\begin{proof}
Let $\valpha$ be an arbitrary solution to $\OPT(s)$.
\propref{prop:3_eigenvalues} asserts that $\SUPP(\valpha)$
does not contain induced copies of $C_5$, since $C_5$ has three positive
eigenvalues.

If $\SUPP(\valpha)$ contains either $3K_1$, $C_5^+$ or $C_4$ as induced
subgraphs, we just repeatedly apply \propref{prop:3_eigenvalues} (with
$H=3K_1, C_5^+$ or $C_4$) to find a new solution $\valpha'$ with strictly
smaller support, until there are no more induced copies of these graphs in
$\SUPP(\valpha')$.
This is possible because each of $3K_1, C_5^+, C_4$ has three nonnegative
eigenvalues. It is important to remark here that
\begin{enumerate}[(i)]
\item whenever there are copies of $3K_1$ appearing as induced subgraphs of
$\SUPP(\valpha)$, we always choose to apply
\propref{prop:3_eigenvalues} to remove induced copies of $3K_1$
first, and
\item when $3K_1$ do not appear but there is a copy of $C_5^+$
in $\SUPP(\valpha)$, we apply \propref{prop:3_eigenvalues} to remove
the induced copies of $C_5^+$ (rather than removing induced copies of $C_4$);
\item finally, when there are no induced copies of $3K_1$ or $C_5^+$,
we apply \propref{prop:3_eigenvalues} to remove the remaining induced
copies of $C_4$.
\end{enumerate}
This priority (namely $3K_1> C_5^+> C_4$) will play an important role in the
proof of Step 7.

The process described above has to end eventually, because the
support always reduces in size at each application of
\propref{prop:3_eigenvalues}. At the end, we obtain a
solution $\valpha'$ to $\OPT(s)$ such that $\SUPP(\valpha')$ has no
induced copies of $3K_1$, $C_5^+$ and $C_4$, and $\valpha$ and $\valpha'$
are connected in $\FEAS(s)$ by a piecewise linear path.
\end{proof}

At last, we state a useful observation that will be needed in the
subsequent steps.

\begin{observation}

  \label{obs:two_color_cover}
  For any $\valpha$, if $\SUPP(\valpha)$ does not contain an induced
  copy of $3K_1$ then any color $i\in [q]$ is contained in at most
  two subsets of the support of $\valpha$.

\end{observation}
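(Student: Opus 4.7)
The plan is to proceed by contrapositive: I will show that if some color $i\in[q]$ belongs to three distinct sets in the support of $\valpha$, then $\SUPP(\valpha)$ must contain an induced $3K_1$. This is essentially immediate from the definition of the support graph.

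First I would fix any color $i\in[q]$ and suppose, for contradiction, that there exist three distinct sets $A, B, C$ in the support of $\valpha$ with $i\in A\cap B\cap C$. The key observation is that since each pair among $A,B,C$ has $i$ as a common element, no pair among them is disjoint. By the definition of $\SUPP(\valpha)$, whose edges join precisely the disjoint pairs of support sets, this means that none of the pairs $\{A,B\}$, $\{A,C\}$, $\{B,C\}$ forms an edge in $\SUPP(\valpha)$.

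Therefore the three vertices $A, B, C$ induce a subgraph on three vertices with no edges, which is exactly $3K_1$. This contradicts the assumption that $\SUPP(\valpha)$ contains no induced copy of $3K_1$, completing the proof. I do not anticipate any obstacle here: the argument is a direct unwinding of the definition of the support graph combined with the trivial fact that sets sharing an element cannot be disjoint.
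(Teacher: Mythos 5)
Your argument is correct and is exactly the intended (and only natural) one: three support sets all containing a common color are pairwise non-disjoint, hence pairwise non-adjacent in $\SUPP(\valpha)$, giving an induced $3K_1$; the paper leaves this as an unproved observation precisely because of this immediate unwinding of the definition.
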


\subsection{Step 2: excluding matchings of size two}
\begin{proposition}

  \label{prop:step_2}
  Let $\valpha$ be a solution of $\OPT(s)$ such that $\SUPP(\valpha)$
  does not contain induced copies of $3K_1$. Then $\SUPP(\valpha)$
  does not contain induced matchings of size two.

\end{proposition}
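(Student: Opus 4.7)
The plan is to argue by contradiction: assume $\SUPP(\valpha)$ contains an induced matching of size two on four sets $A,B,C,D$ with $A\cap B=\emptyset=C\cap D$ and the four cross-pairs $A\cap C,A\cap D,B\cap C,B\cap D$ all nonempty, and then perturb $\valpha$ to produce either a feasible point with strictly larger $\OBJ$ (contradicting optimality) or a feasible point with $\E>\frac{s-1}{2s}$ (contradicting \propref{prop:tight} after one extra mass-shift into $[q]$). The structural input is \propref{prop:lin_dep} applied to these four sets, which shows that $V:=\mathrm{span}\{\mathbf{1},\mathbf{g},\vbeta\}\subseteq\R^4$ has dimension at most $2$. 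The spectral input is that the adjacency matrix $M$ of the induced $2K_2$ has eigenvalues $1,1,-1,-1$ with positive eigenspace $W^{+}=\mathrm{span}\{(1,1,0,0),(0,0,1,1)\}$.

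I would split into two cases according to whether $W^{+}\subseteq V$ or not. In the generic case $W^{+}\not\subseteq V$, equivalently $V^{\perp}\not\subseteq W^{-}$, there exists $\vgamma\in V^{\perp}$ with $\vgamma^{T}M\vgamma>0$; since $\vgamma$ is orthogonal to $\mathbf{1},\mathbf{g},\vbeta$, perturbing $\valpha$ by $\eps\vgamma$ preserves $\V=1$ and $\OBJ$ exactly, while strictly raising $\E$ to second order. This produces a feasible $\valpha'$ with $\E(\valpha')>\frac{s-1}{2s}$ and $\OBJ(\valpha')=\OPT(s)$, and a tiny shift of mass from a non-$[q]$ set of the support into $[q]$ (as in the proof of \propref{prop:tight}) then raises $\OBJ$ strictly above $\OPT(s)$, a contradiction.

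The remaining degenerate case $W^{+}\subseteq V$ forces $V=W^{+}$ (since $\dim V\le 2$), hence
\[
  |A|=|B|,\quad |C|=|D|,\quad \beta_A=\beta_B,\quad \beta_C=\beta_D.
\]
The KKT conditions for $\OPT(s)$ (with equality constraint $\E=\frac{s-1}{2s}$ by \propref{prop:tight}) yield multipliers $\lambda,\mu$ with $\mu\ge 0$ and $\log|X|=\lambda-\mu\beta_X$ for every $X$ in the support; in particular $\log\frac{|A||B|}{|C||D|}=-\mu(\beta_A+\beta_B-\beta_C-\beta_D)$. I would perturb along $\vgamma=(1,1,-1,-1)$, whose quadratic $\E$-increment is $+4\eps^{2}$ and whose linear $\OBJ$- and $\E$-increments are proportional through the factor $-\mu$. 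Choosing the sign of $\eps$ appropriately and taking $|\eps|$ either infinitesimal (when $\mu=0$ or $\beta_A=\beta_C$) or of order $|\beta_A-\beta_C|$ (when $\mu>0$ and $\beta_A\ne\beta_C$), and if necessary combining with a compensating mass-shift into $[q]$ to absorb any residual $\E$-loss, gives a feasible $\valpha''$ with $\OBJ(\valpha'')>\OPT(s)$, the required contradiction.

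The main obstacle is precisely this degenerate case: because the induced $2K_2$ has only two nonnegative eigenvalues, the purely infinitesimal second-order argument of \propref{prop:3_eigenvalues} is unavailable, and one must couple a finite perturbation in the $(1,1,-1,-1)$-direction with the $[q]$-shift trick, keeping careful track of the linear/quadratic trade-off dictated by the KKT identity $\log|X|=\lambda-\mu\beta_X$.
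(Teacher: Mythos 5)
Your proposal takes a spectral/KKT route that is genuinely different from the paper's argument, but it has a fatal gap in the ``generic'' case. From $W^{+}\not\subseteq V$ (equivalently $V^{\perp}\not\subseteq W^{-}$) you conclude that some $\vgamma\in V^{\perp}$ satisfies $\vgamma^{T}M\vgamma>0$; this implication is false. Since $2K_2$ has eigenvalues $1,1,-1,-1$, a vector with a nonzero $W^{+}$-component can still have nonpositive form value when its $W^{-}$-component dominates, and whole $2$-dimensional subspaces of $\mathbf{1}^{\perp}$ other than $W^{-}$ can be negative definite for $M$. Concretely, with coordinates ordered $A,B,C,D$ and edges $A$--$B$, $C$--$D$, take $V=\mathrm{span}\{\mathbf{1},(0,2,-1,-1)\}$; then $V^{\perp}=\mathrm{span}\{(3,-1,-1,-1),(0,0,1,-1)\}$, and for $\vgamma=a(3,-1,-1,-1)+b(0,0,1,-1)$ one computes $\vgamma^{T}M\vgamma=2(\gamma_1\gamma_2+\gamma_3\gamma_4)=-4a^{2}-2b^{2}\le 0$, even though $W^{+}\not\subseteq V$. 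In such a configuration your Case 1 conclusion fails, and your Case 2 hypotheses ($|A|=|B|$, $|C|=|D|$, $\beta_A=\beta_B$, $\beta_C=\beta_D$) do not hold either, so the two cases do not jointly cover all possibilities. This is exactly the obstruction you name yourself (only two nonnegative eigenvalues of $2K_2$), and the proposed dichotomy does not get around it.

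The degenerate case is also not closed. When $\mu>0$ and $\beta_A\ne\beta_C$, making $\OBJ$ increase along $(1,1,-1,-1)$ forces the linear part of $\Delta\E$ to be negative, so feasibility requires $|\eps|>|\beta_A-\beta_C|$, and then $\valpha'\ge 0$ is not guaranteed; conversely, orienting $\eps$ to gain $\E$-slack loses $\OBJ$ at rate $\mu$ per unit of slack, and the ``mass-shift into $[q]$'' cannot absorb an $\E$-loss (as in \propref{prop:tight} it \emph{decreases} $\E$; it only converts slack into objective at rate $\log(q/|X|)/\beta_X$, which by your own identity is exactly $\mu$ when $[q]$ is in the support), so no strict gain is exhibited. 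Moreover the identity $\log|X|=\lambda-\mu\beta_X$ with $\mu\ge 0$ over the whole support needs a constraint qualification ($\E\ge\frac{s-1}{2s}$ is not convex, so Slater does not apply, and LICQ can fail), which you do not verify. Finally, note that you never use the hypothesis that $\SUPP(\valpha)$ has no induced $3K_1$, which the paper's proof uses crucially: by \obsref{obs:two_color_cover} each color lies in at most two support sets, so one can swap the colors of the smallest nonempty intersection, say $A\cap B$, with an equal-sized subset $S\subseteq C\cap D$ between $B$ and $D$; this keeps all set sizes and all adjacencies to other support sets (hence $\OBJ$ stays at $\OPT(s)$), but creates the new disjoint pair $\{A,B'\}$, so $\E$ strictly increases, contradicting \propref{prop:tight}. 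Either adopt that combinatorial exchange or repair the spectral argument; as written, the proposal does not prove the proposition.
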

\begin{proof}
Suppose for a contradiction that some $\valpha$ violates the
proposition. We know there exist four vertices $A,B,C,D$ of
$\SUPP(\valpha)$ such that $A\cap C=B\cap D=\emptyset$ and all other
intersections $A\cap B, B\cap C, C\cap D, D\cap A$ are nonempty.

Without losing generality, we assume that the intersection $A\cap B$ is of the
smallest size among all pairwise nonempty intersections of the sets in
$\{A,B,C,D\}$. There exists a subset $S$ of $C\cap D$ such that $|S|=|A\cap B|$.
Clearly $A\cap S = B \cap S = \emptyset$.
We define $B'=(B\setminus(A\cap B))\cup S$, and $D'=(D\setminus S)\cup
(A\cap B)$. By \obsref{obs:two_color_cover}, if $X$ is in the support of
$\valpha$ and $X\cap (A\cap B) \ne\emptyset$ then either $X=A$ or $X=B$.
Similarly, if $X\cap S \ne \emptyset$, then either $X = C$ or $X = D$.
In particular, $\alpha_{B'} = \alpha_{D'} = 0$. Let $\valpha'$ be a vector
obtained from $\valpha$ by defining $\alpha'_{B'}=\alpha_{B}$,
$\alpha'_{D'}=\alpha_{D}$, $\alpha'_B=0$, $\alpha'_D=0$,
and letting $\alpha'_{X} = \alpha_X$ for all $X \not\in\{B, B', D, D'\}$.
It is easy to see that $|B'| = |B|$ and $|D'|=|D|$,
therefore $\OBJ(\valpha')=\OBJ(\valpha)=\OPT(s)$.

The edges between $B',D'$ and another subset
$X \not\in \{A,B,B',C,D,D'\}$ in $\SUPP(\valpha')$ are the same as the
edges between $B,D$ and $X$ in $\SUPP(\valpha)$, as this
color-swapping does not affect those adjacencies.
We also have $ B'\cap D'=\emptyset$, while $A\cap B'= \emptyset \ne A \cap B$.
Therefore, $\E(\valpha')\ge \E(\valpha)+\alpha'_{A}\alpha'_{B'}>\E(\valpha)$.
Note that $\OBJ(\valpha')=\OBJ(\valpha)=\OPT(s)$, so $\valpha'$ and
$\valpha$ are both global maximum points for $\OBJ$ and by
\propref{prop:tight}, $\E(\valpha')=\E(\valpha)=\frac{s-1}{s}$, a
contradiction. This completes the proof of this proposition, thereby
proving Step 2.
\end{proof}

\subsection{Step 3: excluding paths with four vertices}
\begin{proposition}

  \label{prop:step_3}
  For any solution $\valpha$ to $\OPT(s)$, there exist
  no four subsets $A, B, C$ and $D$ in the support of $\valpha$ such that
  they induce a path $A-B-C-D$ in $\SUPP(\valpha)$ and $|A|>|C|$, $|D|>|B|$.

\end{proposition}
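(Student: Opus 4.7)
The plan is to argue by contradiction. Suppose $\valpha$ solves $\OPT(s)$ and its support contains four subsets $A,B,C,D$ inducing the path $A-B-C-D$ in $\SUPP(\valpha)$ with $|A|>|C|$ and $|D|>|B|$. I will construct a vector $\valpha'$ that is feasible with $\OBJ(\valpha')=\OBJ(\valpha)$ but $\E(\valpha')>\frac{s-1}{2s}$, contradicting \propref{prop:tight} applied to the maximizer $\valpha'$.

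The key step is selecting the perturbation direction. Set $u:=\log(|A|/|C|)/\log(|D|/|B|)$, which is strictly positive by the size hypotheses. Let $\vgamma^{*}$ be the vector with $\gamma^{*}_A=1$, $\gamma^{*}_B=u$, $\gamma^{*}_C=-1$, $\gamma^{*}_D=-u$, and all other coordinates zero. By direct computation $\vgamma^{*}\cdot\mathbf{1}=0$ and $\vgamma^{*}\cdot\mathbf{g}=\log(|A|/|C|)-u\log(|D|/|B|)=0$, where $\mathbf{1}$ and $\mathbf{g}$ are as in \propref{prop:lin_dep} restricted to $\{A,B,C,D\}$. The first identity ensures $\V$ is preserved exactly, and the second, combined with the linearity of $\OBJ$, ensures $\OBJ(\valpha+\eps\vgamma^{*})=\OBJ(\valpha)$ exactly.

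To control $\E$, I invoke \propref{prop:lin_dep}: the three vectors $\mathbf{1},\mathbf{g},\vbeta\in\R^4$ for the vertices $A,B,C,D$ are linearly dependent. Because $|A|>|C|$ makes $\mathbf{1}$ and $\mathbf{g}$ linearly independent, $\vbeta$ lies in their span, and hence $\vgamma^{*}\cdot\vbeta=0$. Therefore the first-order term in the expansion of $\E(\valpha+\eps\vgamma^{*})$ vanishes, giving $\E(\valpha+\eps\vgamma^{*})-\E(\valpha)=\eps^2\sum_{\{X,Y\}\text{ disjoint}}\gamma^{*}_X\gamma^{*}_Y$. Since $\vgamma^{*}$ is supported on $\{A,B,C,D\}$, only the three disjoint pairs $(A,B),(B,C),(C,D)$ inside this set contribute, and they sum to $(1)(u)+(u)(-1)+(-1)(-u)=u>0$. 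For $\eps>0$ sufficiently small, only $\alpha_C$ and $\alpha_D$ decrease and they remain positive, so $\valpha':=\valpha+\eps\vgamma^{*}\ge 0$ is feasible with $\OBJ(\valpha')=\OPT(s)$ yet $\E(\valpha')=\E(\valpha)+\eps^2 u>\frac{s-1}{2s}$, the desired contradiction.

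The main challenge is finding the direction $\vgamma^{*}$. Naive perturbations like moving mass from $C$ to $A$ and from $B$ to $D$ simultaneously decrease $\E$ at both first and second order, so one must analyze the adjacency quadratic form of $P_4$ on the tangent space of the $\V$- and $\E$-constraints; the specific alternating pattern $(1,u,-1,-u)$ is essentially forced by orthogonality to $\mathbf{1}$ and $\mathbf{g}$ on the four chosen coordinates, and the strict positivity $u>0$ provided by the size hypotheses is precisely what makes the three edge-contributions of $P_4$ add constructively rather than cancel.
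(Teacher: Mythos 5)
Your proof is correct, and it reaches the conclusion by a noticeably leaner route than the paper, even though the surrounding framework is the same. Both arguments perturb $\valpha$ along a direction $\vgamma$ supported on $\{A,B,C,D\}$ with $\vgamma\cdot\mathbf{1}=\vgamma\cdot\mathbf{g}=0$ (hence $\vgamma\cdot\vbeta=0$ via \propref{prop:lin_dep}, using that $|A|>|C|$ makes $\mathbf{1},\mathbf{g}$ independent) and with strictly positive value of the $P_4$ adjacency quadratic form, and both then contradict \propref{prop:tight} because the perturbed vector is again a maximizer but has $\E$ strictly above $\frac{s-1}{2s}$. The difference is in how the direction is produced: the paper assumes WLOG $|D|\ge|A|$, expands $\mathbf{v}$ in the eigenbasis of the path's adjacency matrix, and runs a two-case analysis culminating in the inequality $|c|>\lambda^3$ to certify $\mathbf{v}^TM\mathbf{v}>0$; you instead write down the explicit vector $(\gamma^*_A,\gamma^*_B,\gamma^*_C,\gamma^*_D)=(1,u,-1,-u)$ with $u=\log(|A|/|C|)/\log(|D|/|B|)>0$, for which orthogonality to $\mathbf{1}$ is immediate, orthogonality to $\mathbf{g}$ is the definition of $u$, and the quadratic form evaluates to exactly $u>0$ since the only disjoint pairs inside $\{A,B,C,D\}$ are the three path edges. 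All the checks you need (exact preservation of $\V$ and $\OBJ$ by linearity, vanishing of the first-order term of $\E$, nonnegativity for small $\eps$ since only $\alpha_C,\alpha_D$ decrease and both are positive) go through as you state them. What your approach buys is the elimination of the eigenvector computation, the WLOG assumption, and the case analysis, at no loss of rigor; the only quibble is your closing remark that the pattern $(1,u,-1,-u)$ is ``essentially forced'' by orthogonality to $\mathbf{1}$ and $\mathbf{g}$ --- that orthogonal complement is two-dimensional, so your vector is a choice, not a forced one --- but this is commentary and does not affect the proof.
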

\begin{proof}
Suppose that for some solution $\valpha$ to $\OPT(s)$ there exist four subsets
$A$, $B$, $C$ and $D$ in the support of $\valpha$ such that they induce a path
$A-B-C-D$ in $\SUPP(\valpha)$ and $|A|>|C|$, $|D|>|B|$. By symmetry, we assume
that $|D|\ge |A|$. Let
\[
  M=\lt[\begin{array}{llll}
       0&0&1&1 \\
       0&0&0&1 \\
       1&0&0&0 \\
       1&1&0&0
    \end{array}\rt]
\]
be the adjacency matrix of the induced path $A-B-C-D$, where its rows/columns
are arranged according to the order $B$, $D$, $A$, $C$. Let $\lambda=
\frac{\sqrt{5}+1}{2}$. It can be verified that $M$ has four eigenvalues
$\lambda, -\lambda, \lambda-1, -\lambda+1$ and their corresponding eigenvectors
\[
  \mathbf{v}_1=\lt[
    \begin{array}{r} \lambda\\1\\1\\ \lambda\end{array}\rt],
  \mathbf{v}_2=\lt[
    \begin{array}{r} -\lambda\\-1\\ 1\\ \lambda\end{array}\rt],
  \mathbf{v}_3=\lt[
    \begin{array}{r} 1\\-\lambda\\ \lambda\\-1\end{array}\rt],
  \mathbf{v}_4=\lt[
    \begin{array}{r} 1\\-\lambda\\ -\lambda\\ 1\end{array}\rt].
\]
Note that the above four eigenvectors are orthogonal.
Let $\mathbf{1}$, $\mathbf{g}$, and $\vbeta$ be such as
in \propref{prop:lin_dep} for the vertices $B$,$D$,$A$,$C$.
We have $\mathbf{g} = (x_1, x_2, x_3, x_4)^T$, where $x_1=\log|B|$,
$x_2=\log|D|$, $x_3=\log|A|$ and $x_4=\log|C|$. We remark that $x_2>x_1$,
$x_3>x_4$ and $x_2\ge x_3$.

We claim that there exists a vector $\mathbf{v}:=a\mathbf{v}_1+b\mathbf{v}_2+
c\mathbf{v}_3+d\mathbf{v}_4$ such that $\mathbf{v}\cdot \mathbf{1}=0$,
$\mathbf{v}\cdot \mathbf{g}=0$ and $\mathbf{v}^T M \mathbf{v}=
2(\lambda^2+1)\cdot\lt[\lambda(a^2-b^2)+(\lambda-1)(c^2-d^2)\rt]>0$.
We first rewrite $\mathbf{v}\cdot\mathbf{1}=0$ as $a(1+\lambda)+d(1-\lambda)=0$,
which together with $\lambda^2=\lambda+1$ implies that $d=\frac{\lambda+1}%
{\lambda-1}\cdot a=\lambda^3 a$. Substituting $d=\lambda^3 a$, the second
equation $\mathbf{v}\cdot \mathbf{g}=0$ becomes
\[
  a(1+3\lambda)(x_1-x_2-x_3+x_4)+b(-\lambda x_1-x_2+x_3+\lambda x_4)+
  c(x_1-\lambda x_2+\lambda x_3-x_4)=0.
\]
If $x_1-\lambda x_2+\lambda x_3-x_4=0$, then we may choose $a$,$b$ such that
$a(1+3\lambda)(x_1-x_2-x_3+x_4)+b(-\lambda x_1-x_2+x_3+\lambda x_4)=0$ and
choose $c$ sufficiently large such that $\mathbf{v}^T M \mathbf{v}>0$, thereby
the claim follows. If $x_1-\lambda x_2+\lambda x_3-x_4\ne 0$, then we choose
$a=b=1$, $d=\lambda^3$ and
\[
  c=\frac{(3\lambda+2)x_2+3\lambda x_3-(2\lambda+1)x_1-(4\lambda+1)x_4}{%
  x_1-\lambda x_2+\lambda x_3-x_4}
\]
such that the second equation $\mathbf{v}\cdot\mathbf{g}=0$ is satisfied.
We will show $|c|>\lambda^3$, which implies $\mathbf{v}^T M\mathbf{v}>0$
and hence the claim. Note that $x_2>x_1, x_3>x_4$ and
$x_2\ge x_3$, so $(3\lambda+2)x_2+3\lambda x_3-(2\lambda+1)x_1-
(4\lambda+1)x_4>0$, thus it suffices to show that
\begin{align*}
  (3\lambda+2)x_2+3\lambda x_3-(2\lambda+1)x_1-(4\lambda+1)x_4&>
  \lambda^3(x_1-\lambda x_2+\lambda x_3-x_4)\Text{and}\\
  (3\lambda+2)x_2+3\lambda x_3-(2\lambda+1)x_1-(4\lambda+1)x_4&>
  -\lambda^3(x_1-\lambda x_2+\lambda x_3-x_4).
\end{align*}
To see this, using $\lambda^3=2\lambda+1$ and $\lambda^4=3\lambda+2$, the above
two inequalities can be simplified as $(6\lambda+4)x_2>2x_3+(4\lambda+2)x_1+
2\lambda x_4$ and $x_3>x_4$, which are obviously true, thereby
establishing the claim.

Rewrite this vector $\mathbf{v}$ as $(v_1,v_2,v_3,v_4)^T$. 
As in the proof of \propref{prop:3_eigenvalues}, we let $\valpha'$ be
a vector obtained from $\valpha$ by replacing
\begin{align*}
  \alpha'_{B} := \alpha_{B} + \eps \cdot v_1 &,\quad
  \alpha'_{D} := \alpha_{D} + \eps \cdot v_2\\
  \alpha'_{A} := \alpha_{A} + \eps \cdot v_3 &,\quad
  \alpha'_{C} := \alpha_{C} + \eps \cdot v_4
\end{align*}
while keeping the other entries unchanged, where $\eps>0$ is sufficiently small. We have
$\valpha'\ge 0$, $\V(\valpha') = 1$, and $\OBJ(\valpha')=\OBJ(\valpha)$;
because $\mathbf{1}$, $\mathbf{g}$ and $\vbeta$ are linearly dependent
(this implies $\mathbf{v}\cdot \vbeta=0$), we also have
\[
  \E(\valpha') = \E(\valpha) + \eps^2 \mathbf{v}^T M \mathbf{v}
  > \E(\valpha),
\]
which contradicts \propref{prop:tight}. This finishes the proof.
\end{proof}

\subsection{Step 4: the star has two petals}
\begin{proposition}

  \label{prop:step_4}
  For any solution $\valpha$ to $\OPT(s)$, if $\SUPP(\valpha)$ can be
  obtained from a clique by removing a star (a collection of edges sharing
  a common endpoint), then the star must have exactly two edges and
  $\SUPP(\valpha)\in\mc{Q}$.

\end{proposition}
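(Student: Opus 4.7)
The plan has two ingredients: first a combinatorial step using \propref{prop:supp_inter} to pin down the set-theoretic shape of $B$ and the petals $A_1,\dots,A_t$, then a perturbation argument to rule out the cases $t=1$ and $t\geq 3$. Write $B$ for the center of the removed star and $A_1,\dots,A_t$ for the petals; the clique-minus-star hypothesis means $B\cap A_i\neq\emptyset$ for every $i$, while all other pairs in $\SUPP(\valpha)$ are disjoint. Applying \propref{prop:supp_inter} to the pair $(A_i,B)$ will give $A_i\subseteq B$: the alternative produces a support set $C$ disjoint from $B$ but meeting $A_i$, and such a $C$ must be one of the non-petal support vertices, which by the clique-minus-star structure is adjacent to every $A_i$, a contradiction. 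Applying \propref{prop:supp_inter} to $(B,A_i)$ will then show (after ruling out $B\subseteq A_i$ by distinctness) that each color in $B\setminus A_i$ lies in some petal $A_j$ with $j\neq i$; together with the pairwise disjointness of the petals, this forces $B=\bigsqcup_{j=1}^t A_j$.

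From here $t=1$ is ruled out immediately, since it would collapse $B$ and $A_1$ into the same support vertex. For $t\geq 3$ I would display an improving feasible direction. Setting $D:=A_1\cup A_2$ and $F:=B\setminus A_2$---both outside the support when $t\geq 3$---consider
\[
\eta:=r(e_D-e_{A_1})+(e_F-e_B),
\]
with $r>0$ to be tuned and $e_X$ the unit vector in the $X$-coordinate. Each summand preserves $\V$. A short $\beta$-computation, in which the remaining support vertices contribute identically to the relevant differences and cancel, gives $\nabla\E\cdot\eta=(1-r)\alpha_{A_2}$, while linearity of $\OBJ$ yields $\nabla\OBJ\cdot\eta=r\log(|D|/|A_1|)-\log(|B|/|F|)$.

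Feasibility of $\valpha+\eps\eta$ for small $\eps>0$ requires $r<1$, and strict improvement requires $r>\log(|B|/|F|)/\log(|D|/|A_1|)$; these two ranges on $r$ are simultaneously satisfiable exactly when $|D|\cdot|F|>|A_1|\cdot|B|$, and the identity
\[
|D|\cdot|F|-|A_1|\cdot|B|=|A_2|\bigl(|B|-|A_1|-|A_2|\bigr)
\]
reduces this to $|B|>|A_1|+|A_2|$, which is forced by $t\geq 3$ since $|B|\geq|A_1|+|A_2|+|A_3|$. This contradicts the maximality of $\valpha$, so $t=2$; then \propref{prop:color_cover} together with the pairwise disjointness of the remaining support vertices places $\SUPP(\valpha)$ in $\mc{Q}$. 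The main conceptual obstacle is spotting the coupling: each ingredient of $\eta$ in isolation strictly worsens one of $\OBJ,\E$ and improves the other, and only their balanced combination strictly improves both, precisely when the number of petals exceeds two.
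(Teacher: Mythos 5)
Your proposal is correct, and while its first (combinatorial) half coincides with the paper's argument --- both use \propref{prop:supp_inter} to show that the center of the removed star is the disjoint union of the petals, then \propref{prop:color_cover} to get a partition of $[q]$, and distinctness of support sets to exclude a single petal --- your treatment of the crucial case of three or more petals is genuinely different. The paper restricts to the ground set given by the center, applies the KKT conditions (after checking LICQ) to extract multipliers $\lambda,\mu$, and derives the inequality $|A_i|^{k-1}(|C|-|A_i|)^{k-1}\le P\cdot|C|^{k-2}$, which it refutes by AM--GM. You instead exhibit an explicit improving direction $\eta$ that couples two moves, shifting weight from $A_1$ to $A_1\cup A_2$ (rate $r$) and from the center $B$ to $B\setminus A_2$ (rate $1$). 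I verified the key computations: the remaining support sets contribute equally to the two differences of the quantities $\beta_X=\sum_{Y\cap X=\emptyset}\alpha_Y$ and cancel, giving first-order change $(1-r)\alpha_{A_2}$ in $\E$, while $\OBJ$ (being linear) changes by exactly $\eps\bigl[r\log\frac{|A_1|+|A_2|}{|A_1|}-\log\frac{|B|}{|B|-|A_2|}\bigr]$; the window $\frac{\log(|B|/(|B|-|A_2|))}{\log((|A_1|+|A_2|)/|A_1|)}<r<1$ is nonempty precisely when $(|A_1|+|A_2|)(|B|-|A_2|)>|A_1|\,|B|$, i.e.\ when $|B|>|A_1|+|A_2|$, which indeed holds with at least three petals; and for such $r$ and small $\eps>0$ the perturbed vector stays in $\FEAS(s)$ (only coordinates with positive weight are decreased, $\V$ is preserved, and the positive first-order gain in $\E$ dominates the quadratic term), while $\OBJ$ strictly increases, contradicting optimality. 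So the argument is sound. What your route buys is elementarity and transparency: no multiplier bookkeeping or constraint-qualification discussion is needed, and it pinpoints the combinatorial reason the configuration is suboptimal (each move alone trades $\OBJ$ against $\E$, and only with more than two petals can the trade be made strictly favorable). What the paper's route buys is uniformity: the same KKT machinery set up there is reused quantitatively in Step 5 (Lemmas~\ref{lem:fixed_Qk} and~\ref{lem:fixed_Pk}) and in Section~\ref{sec:opt_integer_s}, so the Step 4 computation comes essentially for free within that framework.
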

\begin{proof}
Let $\valpha$ be a solution to $\OPT(s)$ such that $\SUPP(\valpha)$ is a graph
obtained from a clique by removing the edges of a star. Assume that the support
of $\valpha$ consists of sets $B_1,\ldots, B_t,A_1,\ldots,A_k,C$ for some $t\ge 0$ and
$k\ge 1$ such that sets $B_1, \ldots, B_t, A_1, \ldots, A_k$ are disjoint,
$C\cap B_i=\emptyset$ and $C\cap A_j\ne \emptyset$ for all $i,j$. Since
$\valpha$ is a solution to $\OPT(s)$, \propref{prop:supp_inter} implies that
every color in $C\cup A_1\cup\ldots\cup A_k$ is covered at least twice by
this union. This is only possible when $C=A_1\cup A_2\ldots \cup A_k$,
and thus $k\ge 2$. Moreover, \propref{prop:color_cover} implies that
$B_1,\ldots,B_t,A_1, \ldots,A_k$ form a partition of $[q]$.
To prove that $\SUPP(\valpha)\in \mc{Q}$, we need
to show that $k=2$. Assume, towards contradiction, that $k\ge 3$.

Without loss of generality, we assume that $C=[p]$ for some integer $p\le q$.
Instead of working on $[q]$ and its $\OPT(s)$, we will turn to the study
of the smaller ground set $[p]$ and a new optimization which can be
viewed as a restriction of $\OPTnone$ on $[p]$. Let $\gamma =\sum_{i=1}^k
\alpha_{A_i}+\alpha_C$ and $\widetilde{\valpha}$ be the vector with
$2^p-1$ coordinates such that its support consists of $A_1,\ldots,A_k,C$
and $\widetilde{\alpha}_{A_i}=\frac{\alpha_{A_i}}{\gamma}$,
$\widetilde{\alpha}_{C}=\frac{\alpha_{C}}{\gamma}$. Choose real $s'$ so that
$\frac{s'-1}{2s'}=\Eq{p}(\widetilde{\valpha})$. Now we consider the
new optimization problem $\OPTq{p}(s')$ restricted to the ground
set $[p]$. Since $\valpha$ solves $\OPT(s)$, by our definitions, it is clear
that $\widetilde{\valpha}$ solves $\OPTq{p}(s')$ as well.

We have seen that $\widetilde{\valpha}$ maximizes $\OBJq{p}(
\widetilde{\valpha})$ subject to $\alpha_A\ge 0$ for all nonempty
$A\subseteq [p]$, $\Eq{p}(\valpha)-\frac{s'-1}{2s'}\ge 0$ and
$\Vq{p}(\valpha)=1$.
We will apply the method of Karush-Kuhn-Tucker to $\widetilde{\valpha}$ and
$\OBJq{p}$ (recall the \secref{subsec:KKT}). Before proceeding, we
point out that $\widetilde{\valpha}$ is a regular point for $\OBJq{p}$,
as one can easily verify that $\widetilde{\valpha}$ satisfies the LICQ
conditions. For convenience, write $\widetilde{\alpha}_i:=
\widetilde{\alpha}_{A_i}$, $T:=\sum_{i=1}^k \widetilde{\alpha}_i$ and
$P:=\prod_{i=1}^k |A_i|$. Let $\pi_A$ denote the projection on the coordinate
$A$, i.e., $\pi_A(\valpha)=\alpha_A$. By \eqref{eqn:kkt_conditions},
there exist constants $\mu_A\le 0, \mu$ and $\lambda\le 0$ such that
\[
  \nabla \OBJq{p}(\widetilde{\valpha})=\lt(\sum_{A\subseteq [p]}\mu_A\cdot\nabla
  \pi_A\rt)+ \mu\cdot\nabla \Vq{p}(\widetilde{\valpha})+\lambda\cdot\nabla
  \Eq{p}(\widetilde{\valpha}).
\]
For subset $A$ with $\widetilde{\alpha}_A=0$, we have
\begin{equation}
  \label{eqn:step4_1}
  \log|A|=\mu_A+\mu+\lambda\cdot \sum_{B\cap A=\emptyset}\widetilde{\alpha}_B
  \le \mu+\lambda\cdot \sum_{B\cap A=\emptyset}\widetilde{\alpha}_B.
\end{equation}
For $C$ and $A_i$, we have $\mu_C=\mu_{A_i}=0$, so
\begin{equation}
  \label{eqn:step4_2}
  \log|C|=\mu \enskip\text{and}\enskip\log|A_i|=
  \mu+\lambda\cdot(T-\widetilde{\alpha}_i).
\end{equation}
By \eqref{eqn:step4_2} and summing $\log|A_i|$ from $i=1$ to $k$, we get that
$\lambda\cdot T=\frac{1}{k-1}\log\frac{P}{|C|^k}$ and thereby
\[
  \lambda\cdot \widetilde{\alpha}_i=\frac{1}{k-1}\log\lt(
  \frac{P}{|A_i|^{k-1}|C|}\rt).
\]
As $k\ge 3$, by substituting $A:=C\setminus A_i$ in \eqref{eqn:step4_1},
we obtain $\log(|C|-|A_i|)\le \log|C|+\lambda\cdot\widetilde{\alpha}_i$, thus
\[
  |A_i|^{k-1}\cdot(|C|-|A_i|)^{k-1}\le P\cdot|C|^{k-2}
\]
for all $i=1,2,\ldots,k$. Let $x_i:=\frac{|A_i|}{|C|}$ and assume $x:=x_1\ge
x_2\ge \ldots\ge x_k>0$, then $1>x\ge \frac{1}{k}$ as $\sum_{i=1}^k x_i=1$.
In addition, the previous inequality implies that
\begin{equation}
  \label{eqn:step4_4}
  x^{k-1}(1-x)^{k-1}\le x\cdot\prod_{j=2}^k x_j.
\end{equation}
For $k\ge 3$, one can check that $x^{k-2}\ge(\frac{1}{k})^{k-2}>(\frac{1}{k-1})^{k-1}$.
This inequality, toghether with the AM-GM inequality, implies
\[
  x^{k-1}(1-x)^{k-1}>x\cdot\frac{(1-x)^{k-1}}{(k-1)^{k-1}}\ge
  x\cdot\prod_{j=2}^k x_j,
\]
contradicting \eqref{eqn:step4_4} and finishing the proof of Step 4.
\end{proof}

\subsection{Step 5: computing the size of the support when
the solution is in $\mc{Q}$}
We introduce restricted versions of the \OPTprob.
Fix an integer $k\ge 1$ and a collection $\{A_1,\ldots,A_{k-1},
A_k, A_1\cup A_2\}$ of $[q]$ such that $A_1,\ldots,A_{k}$ form a
$k$-partition of $[q]$. We consider the following optimization:
\begin{equation}
  \label{opt:fixed_Qk}
  \begin{array}{ll}
    \text{Maximize}   & \sum_{i=1}^k\alpha_i \log|A_i|+
                        \beta\log(|A_1|+|A_2|)\\
    \text{Subject to} & \sum_{i=1}^k\alpha_i+\beta=1,\\
                      & \sum_{i=1}^k\alpha_i^2+\beta^2+
                        2\beta(\alpha_1+\alpha_2)\le \frac{1}{s},\\
                      &\alpha_i\ge 0, \beta\ge 0 \Text{for}i=1,\ldots, k.
  \end{array}
\end{equation}
The conditions of \eqref{opt:fixed_Qk} are consistent with the conditions of
the \OPTprob, when restricted to vectors with support in $\mc{P}_k\cup
\mc{Q}_k$. By compactness, the maximum of \eqref{opt:fixed_Qk} exists,
which is achieved either in the boundary or interior of its domain,
where the vectors in the interior have strict positive coordinates.
By Cauchy-Schwarz, we have
\begin{equation}
  \label{eqn:Qk_bound_k}
  \frac{1}{s}\ge \beta^2+2\beta(\alpha_1+\alpha_2)+\sum_{i=1}^k\alpha_i^2
  \ge (\beta + \alpha_1)^2 + \sum_{i=2}^k \alpha_i^2 \ge
  \frac{1}{k}\lt(\beta + \sum_{i=1}^k\alpha_i\rt)^2 = \frac{1}{k},
\end{equation}
which implies $k \ge s$, and since $k$ is an integer we must also have
$k\ge \cei{s}$. We also remark that if $\alpha_1+\alpha_2,\beta > 0$ then
the inequality is strict, i.e. $k>s$.
We derive a necessary condition for the maximum to be
attained in the interior of the domain of \eqref{opt:fixed_Qk}. Let
\begin{align*}
  S_1&:= \frac{1}{k-1}
  \lt[\log(|A_1| + |A_2|)+\sum_{i=3}^{k}\log|A_i|\rt],\Text{and} \\
  S_2&:=\frac{1}{k-1}\lt[-\log^2\lt(\frac{|A_1|\cdot|A_2|}{|A_1|+|A_2|}\rt)
   +\sum_{i=1}^{k} \log^2|A_i|\rt].
\end{align*}
\begin{lemma}

  \label{lem:fixed_Qk}
  If $\valpha=(\alpha_1,\ldots,\alpha_{k},\beta)>0$ is a local
  maximum point for \eqref{opt:fixed_Qk}, then
  \[
    \alpha_1=\frac{1}{\lambda}\log\lt(
      \frac{|A_1|+|A_2|}{|A_2|}\rt),\enskip
    \alpha_2=\frac{1}{\lambda}\log\lt(
      \frac{|A_1|+|A_2|}{|A_1|}\rt),\enskip
    \beta=\frac{1}{\lambda}\log\lt(\frac{|A_1|\cdot|A_2|}{%
      |A_1|+|A_2|}\rt)-\frac{\mu}{\lambda},
  \]
  for each $i=3,\ldots, k$, we have $\alpha_i=\frac{\log|A_i|-\mu}{\lambda}$,
  and the value of the objective function of \eqref{opt:fixed_Qk} is
  \[
    \beta\log(|A_1|+|A_2|)+\sum_{i=1}^{k}\alpha_i \log|A_i|=
    \mu+\frac{\lambda}{s},
  \]
  where $\lambda=(k-1)\sqrt{\frac{s}{k-1-s}\lt(S_2-S_1^2\rt)}>0$ and
  $\mu=S_1-\frac{\lambda}{k-1}$. In particular, we have $k\ne s+1$,
  $\log\lt(\frac{|A_1|\cdot|A_2|}{|A_1|+|A_2|}\rt)>\mu$, and
  $\log |A_i|> \mu$ for each $i\ge 3$.

\end{lemma}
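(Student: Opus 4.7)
The plan is to apply the method of Lagrange multipliers at the given positive local maximum $\valpha=(\alpha_1,\ldots,\alpha_k,\beta)$. Since every coordinate is strictly positive, the non-negativity constraints are inactive, so only the equality $\sum_{i=1}^k\alpha_i+\beta=1$ and the quadratic inequality are in play. I would begin by showing that the quadratic inequality must be active at $\valpha$: the objective is linear and the feasible region before imposing the quadratic is a simplex, so a strict inequality would force a linear functional to attain an interior local maximum on that simplex, which can only happen if it is locally constant; that would require $\log|A_i|=\log(|A_1|+|A_2|)$ for all $i$, which is absurd. Hence both constraints are tight.

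With multipliers $\mu$ (for the equality) and $\lambda$ (for the now-equality quadratic), stationarity yields
\begin{align*}
\log|A_1|&=\mu+\lambda(\alpha_1+\beta), & \log|A_2|&=\mu+\lambda(\alpha_2+\beta),\\
\log|A_i|&=\mu+\lambda\alpha_i\ \ (i\ge 3), & \log(|A_1|+|A_2|)&=\mu+\lambda(\beta+\alpha_1+\alpha_2).
\end{align*}
Subtracting the fourth equation from the sum of the first two gives $\lambda\beta=\log(|A_1||A_2|/(|A_1|+|A_2|))-\mu$; feeding this back into the first (resp.\ second) equation produces the claimed formulas for $\alpha_1,\alpha_2,\beta$ and $\alpha_i$. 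Positivity of $\alpha_1$ together with $\log((|A_1|+|A_2|)/|A_2|)>0$ forces $\lambda>0$; positivity of $\beta$ and of $\alpha_i$ for $i\ge 3$ then deliver the strict inequalities $\log(|A_1||A_2|/(|A_1|+|A_2|))>\mu$ and $\log|A_i|>\mu$.

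To identify $\mu$ and $\lambda$, I would plug the formulas back into the two active constraints. Substituting into $\sum_i\alpha_i+\beta=1$ directly yields $\mu=S_1-\lambda/(k-1)$. For the quadratic constraint, the clean reformulation is via the substitution $u_i=\alpha_i+\beta$ for $i\in\{1,2\}$ and $u_i=\alpha_i$ for $i\ge 3$, under which
\[
\sum_{i=1}^k\alpha_i^2+\beta^2+2\beta(\alpha_1+\alpha_2)=\sum_{i=1}^k u_i^2-\beta^2=\frac{1}{s},
\]
while stationarity asserts $u_i=(\log|A_i|-\mu)/\lambda$ and $\lambda\beta=v_0-\mu$ with $v_0:=\log(|A_1||A_2|/(|A_1|+|A_2|))$. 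A short variance-style computation, using $\sum_{i=1}^k\log|A_i|-v_0=(k-1)S_1$ and $\mu=S_1-\lambda/(k-1)$, reduces the displayed identity to
\[
\frac{\lambda^2}{s}=(k-1)(S_2-S_1^2)+\frac{\lambda^2}{k-1},
\]
whence $\lambda=(k-1)\sqrt{s(S_2-S_1^2)/(k-1-s)}$. The appearance of $k-1-s$ in the denominator is precisely what forces $k\ne s+1$. For the value of the objective, I would multiply each stationarity equation by the corresponding variable and sum; the right-hand side collapses to
\[
\mu\Bigl(\sum_i\alpha_i+\beta\Bigr)+\lambda\Bigl(\sum_i\alpha_i^2+\beta^2+2\beta(\alpha_1+\alpha_2)\Bigr)=\mu+\lambda/s,
\]
establishing the final claim.

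The main obstacle is the algebraic reduction that produces the formula for $\lambda$: the cross-term $2\beta(\alpha_1+\alpha_2)$ obscures the structure of the quadratic constraint until the substitution $u_i=\alpha_i+\beta$ consolidates it into $\sum_i u_i^2-\beta^2$; once this is done, the calculation becomes a routine expansion around the mean $S_1$. Everything else in the conclusion is extracted by straightforward substitution.
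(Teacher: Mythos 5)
Your proposal is correct and follows essentially the same route as the paper: show the quadratic constraint is active (the paper invokes the argument of \propref{prop:tight}, you use the linearity of the objective on the simplex), write the same stationarity equations via multipliers, extract the formulas for $\alpha_1,\alpha_2,\beta,\alpha_i$, and then solve the system $\sum_i\alpha_i+\beta=1$, $(\alpha_1+\beta)^2+(\alpha_2+\beta)^2-\beta^2+\sum_{i\ge 3}\alpha_i^2=\frac1s$ for $\mu$ and $\lambda$; your $u$-substitution and the identity $\frac{\lambda^2}{s}=(k-1)(S_2-S_1^2)+\frac{\lambda^2}{k-1}$ are exactly the computation the paper leaves implicit, and the objective-value and positivity conclusions check out (note only that invoking Lagrange multipliers needs the easy regularity fact, recorded in the paper as LICQ, that at a positive point the gradient of the quadratic constraint is not parallel to $\mathbf{1}$). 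One caveat: your justification of $k\ne s+1$ ("the appearance of $k-1-s$ in the denominator") is no more than what the paper itself offers --- strictly, setting $k-1=s$ in your identity only forces $S_2=S_1^2$ rather than an outright contradiction, so a fully rigorous deduction would still have to exclude that degenerate coincidence, a step neither you nor the paper's proof supplies.
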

\begin{proof}
By the same proof of \propref{prop:tight}, the local maximality of
\eqref{opt:fixed_Qk} implies $\sum_{i=1}^{k}\alpha_i^2+\beta^2+2\beta
(\alpha_1 + \alpha_2)=\frac{1}{s}$.
We can apply the KKT method as a straightforward calculation shows
that the local extremum $\valpha$ satisfies the LICQ conditions.
Therefore there exist constants $\lambda\ge 0$ and $\mu$ such
that for each $i \ge 3$, $\log|A_i|=\mu+\lambda\alpha_i$,
$\log|A_1|=\mu+\lambda(\alpha_1+\beta)$,
$\log|A_2|=\mu+\lambda(\alpha_2+\beta)$ and $\log(|A_1|+|A_2|)=\mu+
\lambda(\alpha_1+\alpha_2+\beta)$. If $\lambda=0$, then
$\log(|A_1|+|A_2|)=\mu=\log|A_1|$, a contradiction, thus $\lambda>0$.

We can rewrite the above equations as $\alpha_i=
\frac{1}{\lambda}(\log|A_i|-\mu)$ for each $i\ge 3$, $\alpha_1=
\frac{1}{\lambda}\log\lt(\frac{|A_1|+|A_2|}{|A_2|}\rt)$,
$\alpha_2=\frac{1}{\lambda}\log \lt(\frac{|A_1|+|A_2|}{|A_1|}\rt)$
and $\beta=\frac{1}{\lambda}\lt(\log\lt(
\frac{|A_1|\cdot|A_2|}{|A_1|+|A_2|}\rt)-\mu\rt)$. Now solving the
system $\sum_{i=1}^{k}\alpha_i+\beta=1$ and $(\alpha_1+\beta)^2+
(\alpha_2+\beta)^2 -\beta^2+\sum_{i=3}^{k}\alpha_i^2=\frac{1}{s}$, we obtain
the desired expressions of $\lambda$ and $\mu$. This finishes the proof.
\end{proof}

\medskip

For completion, we turn to the case $\beta = 0$. Let
$S_1':=\frac{1}{k}\sum_{i=1}^k \log |A_i|$ and $S_2':=\frac{1}{k}
\sum_{i=1}^k \log^2|A_i|$. We have the following

\begin{lemma}

  \label{lem:fixed_Pk}
  If $k>s$ and $\valpha=(\alpha_1,\ldots,\alpha_k,0)$ is a local maximum point
  of \eqref{opt:fixed_Qk} satisfying $\alpha_i > 0$ for all $i=1,\ldots, k$,
  then $\lambda'\alpha_i=\log|A_i|-\mu'$ for each $i=1,\ldots,k$ and the
  value of the objective function is
  \[
    \sum_{i=1}^k \alpha_i \log |A_i|=\mu'+\frac{\lambda'}{s},
  \]
  where $\lambda'=k\sqrt{\frac{s}{k-s}\lt(S_2'-(S_1')^2\rt)}\ge 0$ and $\mu'=
  S_1'-\frac{\lambda'}{k}$. In particular, $\log |A_i|\ge\mu'$ for all
  $i=1,\ldots,k$.

\end{lemma}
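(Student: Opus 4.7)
The plan is to mirror the proof of \lemref{lem:fixed_Qk} essentially verbatim, treating $\beta=0$ as an additional active boundary constraint. First I would establish that the quadratic inequality is tight, i.e.\ $\sum_{i=1}^k \alpha_i^2 = \frac{1}{s}$. This follows by the same perturbation argument used in the proof of \propref{prop:tight}: assuming the $|A_i|$ are not all equal (otherwise the lemma holds trivially with $\lambda'=0$), pick indices with $|A_i|>|A_j|$ and shift a small mass $\eps$ from $\alpha_j$ to $\alpha_i$; this increases the objective at first order in $\eps$ while perturbing the quadratic only at second order, contradicting local maximality if the inequality were slack.

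Next, I would apply the KKT method as in \lemref{lem:fixed_Qk}, but now with three active constraints at $\valpha$: the equality $\sum_i\alpha_i+\beta=1$, the tight quadratic $\sum_i\alpha_i^2+\beta^2+2\beta(\alpha_1+\alpha_2)=\frac{1}{s}$, and the boundary $\beta\ge 0$. LICQ is routine to verify since $\alpha_i>0$ makes the relevant gradients obviously independent. Writing the KKT multipliers as $\tau$ (equality), $\mu\ge 0$ (quadratic), $\nu\ge 0$ (boundary), the stationarity conditions give, for each $i=1,\ldots,k$, the identity $\log|A_i|=\tau+2\mu\alpha_i$ (note that for $i=1,2$ the term $2\mu\beta$ vanishes because $\beta=0$). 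Setting $\lambda':=2\mu\ge 0$ and $\mu':=\tau$, this is exactly the claimed relation $\lambda'\alpha_i=\log|A_i|-\mu'$.

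To close the formulas, I would sum $\log|A_i|=\mu'+\lambda'\alpha_i$ over $i$, using $\sum_i\alpha_i=1$, to obtain $kS_1'=k\mu'+\lambda'$, i.e.\ $\mu'=S_1'-\lambda'/k$. Multiplying by $\alpha_i$ and summing, together with tightness $\sum\alpha_i^2=1/s$, gives the value of the objective:
\[
  \sum_i\alpha_i\log|A_i|=\mu'\sum_i\alpha_i+\lambda'\sum_i\alpha_i^2=\mu'+\frac{\lambda'}{s}.
\]
Finally, squaring $\log|A_i|=\mu'+\lambda'\alpha_i$ and summing yields $kS_2'=k\mu'^2+2\lambda'\mu'+\lambda'^2/s$; substituting $\mu'=S_1'-\lambda'/k$ and simplifying produces $\lambda'^2(k-s)/(sk)=k(S_2'-S_1'^2)$, hence $\lambda'=k\sqrt{\frac{s}{k-s}(S_2'-S_1'^2)}$, which is real precisely because of the hypothesis $k>s$. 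The last assertion $\log|A_i|\ge\mu'$ is immediate from $\log|A_i|-\mu'=\lambda'\alpha_i\ge 0$.

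There is no real obstacle; the main thing to be careful about is the bookkeeping around the boundary multiplier $\nu$ for $\beta\ge 0$, which does not appear in the $\alpha_i$-equations and hence does not interfere with the derivation, and the degenerate case where all $|A_i|$ coincide (handled separately, since there the quadratic constraint may be slack but the statement holds vacuously with $\lambda'=0$).
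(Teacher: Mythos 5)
Your proposal is correct and is essentially the paper's own route: the paper proves \lemref{lem:fixed_Pk} by saying it is ``very similar'' to the proof of \lemref{lem:fixed_Qk}, i.e.\ KKT stationarity $\log|A_i|=\mu'+\lambda'\alpha_i$ at the positive coordinates followed by exactly the algebra you perform (sum, multiply by $\alpha_i$ and use tightness, then square) to solve for $\mu'$ and $\lambda'$, with the degenerate all-equal case being precisely where the paper's remark that $\lambda'=0$ may occur applies. The one substantive divergence is the constraint qualification: the paper restricts to the slice $\beta=0$, where the quadratic constraint becomes the convex condition $\sum_i\alpha_i^2\le\frac{1}{s}$, and invokes Slater's condition (the hypothesis $k>s$ supplies the strictly feasible point $\alpha_i=\frac{1}{k}$), whereas you keep $\beta$ as a variable with the active constraint $\beta\ge 0$ and claim LICQ. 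LICQ does hold at your point, but not ``because $\alpha_i>0$'': positivity only removes the constraints $\alpha_i\ge 0$ from the active set, and the three remaining gradients $(1,\ldots,1,1)$, $(2\alpha_1,\ldots,2\alpha_k,2(\alpha_1+\alpha_2))$, $(0,\ldots,0,-1)$ are linearly dependent exactly when all $\alpha_i$ are equal; that possibility is excluded only because tightness together with $k>s$ rules out $\alpha_i\equiv\frac{1}{k}$, so your justification should say this (or simply follow the paper and use Slater). A second small slip: in your tightness argument the quadratic changes by $2\eps(\alpha_i-\alpha_j)+2\eps^2$, which is first order rather than second; this is harmless, since slackness alone keeps the perturbed point feasible for small $\eps$ while the objective strictly increases, but the ``second order'' claim should be dropped. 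With these two justifications repaired, your argument matches the paper's and is complete.
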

\begin{proof}
The proof is very similar to the proof of \lemref{lem:fixed_Qk}. The
only difference is that, in order to apply KKT, we use Slater's condition
instead (when $\beta = 0$, the nonlinear constraint becomes a convex
constraint). We also remark that, when $k=s$, we must have
$\alpha_1 = \alpha_2 = \ldots =\alpha_k = \frac{1}{k}$, and
the value of the objective function is $S_1'$.
\end{proof}

\noindent \textbf{Remark}. In \lemref{lem:fixed_Pk} it is possible 
that $\lambda' = 0$ (for instance, when all the $A_i$'s have
the same size).
\medskip

We now are ready to present the proof of Step 5. With slight abuse of
notation, we use $\valpha$ to express the sub-vector induced by the nonzero
coordinates of vector $\valpha$.

\begin{proposition}

  \label{prop:step_5}
  For any solution $\valpha$ to $\OPT(s)$ such that $\SUPP(\valpha)\in\mc{Q}$,
  we have $\SUPP(\valpha)\in \mc{Q}_\cei{s}$. In particular,
  $s\not\in \mathbb{Z}$.

\end{proposition}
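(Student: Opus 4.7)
The plan is to argue by contradiction. The bound \eqref{eqn:Qk_bound_k} already gives $k\ge\cei{s}$, so the content of the proposition is to rule out $k\ge\cei{s}+1$. Under that assumption, together with the ``$k\neq s+1$'' clause of \lemref{lem:fixed_Qk} (relevant only when $s$ is an integer), we obtain $k-1>s$ strictly. Since $\valpha$ is then an interior local maximum of \eqref{opt:fixed_Qk} on the fixed support $\{A_1,\dots,A_k,A_1\cup A_2\}$, \lemref{lem:fixed_Qk} applies in full force and yields $\lambda>0$, $S_2>S_1^2$, and the identification $\OBJ(\valpha)=\textup{Value}:=S_1+\sqrt{(k-1-s)(S_2-S_1^2)/s}$.

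The heart of the argument is to produce a strictly better feasible competitor by collapsing $A_1,A_2$ into the single block $A_1\cup A_2$. I would consider the $(k-1)$-partition $\mathcal{P}':=\{A_1\cup A_2,A_3,\dots,A_k\}$ of $[q]$; since $k-1>s$, \lemref{lem:fixed_Pk} is applicable. A direct algebraic manipulation yields $S_1''=S_1$ together with the factorization
\[
  S_2''-S_2=\frac{2}{k-1}\bigl(\log(|A_1|+|A_2|)-\log|A_1|\bigr)\bigl(\log(|A_1|+|A_2|)-\log|A_2|\bigr)>0,
\]
so the candidate vector $\valpha''$ prescribed by \lemref{lem:fixed_Pk} attains the value $\textup{Value}'':=S_1+\sqrt{(k-1-s)(S_2''-S_1^2)/s}$, strictly exceeding $\textup{Value}$.

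The main obstacle is to confirm that $\valpha''$ is actually feasible, i.e., that each coordinate $\alpha_B''=(\log|B|-\mu'')/\lambda''$ for $B\in\mathcal{P}'$ is strictly positive. For this I would reuse the KKT-type equalities furnished by \lemref{lem:fixed_Qk} for $\valpha$: the identities $\log|A_i|=\mu+\lambda\alpha_i$ (for $i\ge 3$) and $\log(|A_1|+|A_2|)=\mu+\lambda(\alpha_1+\alpha_2+\beta)$, combined with $\lambda>0$ and the positivity of the $\alpha_i$'s and of $\beta$, give $\log|B|>\mu$ for every block $B\in\mathcal{P}'$. Since $\lambda''>\lambda$ (forced by $S_2''>S_2$ with $S_1''=S_1$) implies $\mu''=S_1-\lambda''/(k-1)<\mu$, we in fact have $\log|B|>\mu''$, so each $\alpha_B''>0$. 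Hence $\valpha''$ is feasible for $\OPT(s)$ with $\OBJ(\valpha'')>\OBJ(\valpha)=\OPT(s)$, the desired contradiction. This forces $k=\cei{s}$, and the ``in particular'' claim follows immediately: $k=\cei{s}$ together with the strict inequality $k>s$ from \eqref{eqn:Qk_bound_k} yields $\cei{s}>s$, hence $s\notin\mathbb{Z}$.
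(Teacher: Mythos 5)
Your proof is correct and follows essentially the same route as the paper: rule out $k>\cei{s}$ by merging $A_1,A_2$ into $A_1\cup A_2$, using the formulas of \lemref{lem:fixed_Qk} and \lemref{lem:fixed_Pk} with $S_1$ unchanged and the second moment strictly increasing, checking positivity via $\mu''<\mu$ and $\lambda''>\lambda$, and deducing $s\notin\mathbb{Z}$ from the strictness remark after \eqref{eqn:Qk_bound_k}. The only differences (your explicit factorization of $S_2''-S_2$ and using the KKT equality for the merged block's positivity instead of the paper's bound via $\log\frac{|A_1|\cdot|A_2|}{|A_1|+|A_2|}$) are cosmetic.
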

\begin{proof}
Let $\valpha$ be a solution to $\OPT(s)$ such that $\SUPP(\valpha)\in\mc{Q}$.
We will show that in fact $\SUPP(\valpha)\in \mc{Q}_\cei{s}$. Suppose, towards
contradiction, that $\SUPP(\valpha)\in\mc{Q}_{k+1}$ for some $k\ge \cei{s}$.
Let the support of $\valpha$ be a collection $\{A_1,\ldots, A_k, A_{k+1},
A_1\cup A_{2}\}$ such that $A_1,\ldots, A_{k+1}$ form a $(k+1)$-partition of
$[q]$ and rewrite $\valpha=(\alpha_1,\ldots,\alpha_{k+1},\beta)>0$.

Fixed the collection $\{A_1, \ldots, A_{k+1}, A_1\cup A_2\}$, we
consider \eqref{opt:fixed_Qk} --- the restricted version of the \OPTprob.
Obviously, the vector $\valpha$ also achieves the maximum of
\eqref{opt:fixed_Qk}. Since $\valpha>0$, we may apply \lemref{lem:fixed_Qk}.
Note that $\lambda=k\sqrt{\frac{s}{k-s}\lt(S_2-S_1^2\rt)}$ and
$\mu=S_1-\frac{\lambda}{k}$, where
\begin{align*}
  S_1 &=\frac{1}{k}\lt[\log(|A_1|+|A_2|) + \sum_{i=3}^{k+1}\log|A_i|\rt],
      \Text{and} \\
  S_2 &=\frac{1}{k}\lt[\log^2(|A_1|+|A_2|)
    -2\log\lt(\frac{|A_1|+|A_2|}{|A_1|}\rt)
    \log\lt(\frac{|A_1|+|A_2|}{|A_2|}\rt)+\sum_{i=3}^{k+1} \log^2|A_i|\rt].
\end{align*}
From \lemref{lem:fixed_Qk}, we also see that $k>s$ (because $k+1\ne s+1$),
$\log|A_i|-\mu>0$ for $i\ge 3$, $\log\lt(\frac{|A_1|\cdot|A_2|}{|A_1|+
|A_2|}\rt)-\mu>0$ and
\[
  \OBJ(\valpha)=\mu+\frac{\lambda}{s}=S_1+\lt(\frac{1}{s}-
  \frac{1}{k}\rt)\lambda.
\]
We let $S_1'=S_1$ and
\[
  S_2'=\frac{1}{k}\lt[\log^2(|A_1|+|A_{2}|)+\sum_{i=3}^{k+1} \log^2|A_i|\rt].
\]
Clearly $S_2'>S_2$. We construct a vector $\valpha'\in \FEAS(s)$ such that
$\SUPP(\valpha')\in \mc{P}_k$ and $\OBJ(\valpha')>\OBJ(\valpha)$,
which is a contradiction to $\OBJ(\valpha)=\OPT(s)$. We let
$\valpha'=(\alpha_1',\ldots,\alpha_{k-1}', \alpha_k')$ with support being
the $k$-partition $\{A_1\cup A_2, A_3\ldots, A_k, A_{k+1}\}$ of $[q]$.
The coordinates of $\valpha'$ are defined by $\alpha_i'=\frac{\log|A_{i+1}|
-\mu'}{\lambda'}$ for $2\le i\le k$ and $\alpha_1'=\frac{\log(|A_1|+
|A_2|)-\mu'}{\lambda'},$ where
\[
  \lambda'=k\sqrt{\frac{s}{k-s}\lt(S_2'-(S_1')^2\rt)}
  \qquad\text{and}\qquad
  \mu'=S_1'-\frac{\lambda'}{k}.
\]
Note that $k>s$, $S_1'=S_1$ and $S_2'>S_2$. So $\lambda'>\lambda$
and $\mu'<\mu$, then $\log|A_{i+1}|-\mu'> \log|A_{i+1}|-\mu>0$
for $2 \le i \le k$ and $\log(|A_1|+|A_2|)-\mu'> \log\lt(
\frac{|A_1|\cdot|A_2|}{|A_1|+|A_2|}\rt)-\mu>0$, which implies that
$\valpha'>0$.
It is also not hard to verify that $\alpha_1'+\ldots+\alpha_k'=1$ and
$(\alpha_1')^2+\ldots+(\alpha_k')^2=\frac{1}{s}$, therefore indeed
$\valpha'\in \FEAS(s)$. Simplifying the expression of $\OBJ(\valpha')$,
we get
\[
  \OBJ(\valpha')=\mu'+\frac{\lambda'}{s}=S_1'+\lt(\frac{1}{s}-
    \frac{1}{k}\rt)\lambda',
\]
which is strictly larger than $\OBJ(\valpha)=\OPT(s)$. This contradiction
proves that $\SUPP(\valpha)\in \mc{Q}_{\cei{s}}$. To complete the
proof of the proposition, it remains to show that $s\not\in \mathbb{Z}$.
Since $\alpha_1+\alpha_2,\beta > 0$, by the remark in \eqref{eqn:Qk_bound_k}
we must have $\cei{s} = k > s$, hence $s$ is not an integer and we
finish the proof.
\end{proof}

\subsection{Step 6: establishing the structure of a solution}

We quote some results in \cite{Petr} that characterize graphs having at most two
nonnegative eigenvalues. Let $\mc{B}_{2\ell}$ be the class of graphs $G$
satisfying the following conditions:
\begin{itemize}
  \item $V(G)=X\cup Y$ with $X\cap Y=\emptyset$, where $X$ is a union of
        $\ell$ disjoint sets $X_1,\ldots,X_\ell$ of vertices and $Y$ is
        a union of $\ell$ disjoint sets $Y_1,\ldots,Y_\ell$ of vertices;
  \item $G[X]$ and $G[Y]$ are two complete subgraphs of $G$;
  \item for each $i,j\ge 2$, every vertex of $X_i$ is adjacent to every
        vertex of $Y_\ell\cup \ldots \cup Y_{\ell-i+2}$ and every vertex of
        $Y_j$ is adjacent to every vertex of $X_\ell\cup \ldots \cup
        X_{\ell-j+2}$.
\end{itemize}
Let $\mc{B}_1$ be the class of complete graphs. For $\ell\ge 1$,
let $\mc{B}_{2\ell+1}$ be the class of graphs $G$ for which
$V(G)=X\cup Y\cup Z$, $G[X\cup Y]\in \mc{B}_{2\ell}$ and every vertex of
$Z$ is adjacent to all other vertices of $G$. (See \figref{fig:bk_graphs}.)
\begin{lemma}[Lemma 4 in \cite{Petr}]

  \label{lem:bk}
  If a graph $G$ does not contain any of $3K_1$, $C_4$, $C_5$ as induced
  subgraphs then $G\in \bigcup_{t\ge 1}\mc{B}_t$.

\end{lemma}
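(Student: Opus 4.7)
The plan is to pass to the complement and then invoke the classical characterization of bipartite chain graphs. Set $H := \overline{G}$; the three forbidden induced subgraphs of $G$ translate as follows: $H$ is triangle-free (no $3K_1$ in $G$), $H$ is $C_5$-free (since $C_5$ is self-complementary), and $H$ is $2K_2$-free (no $C_4$ in $G$). The statement then reduces to showing that any such $H$ is the complement of a member of $\bigcup_{t\ge 1}\mc{B}_t$.

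I would first show that $H$ is bipartite. It has no induced $C_3$ or $C_5$ by assumption, and for every $k\ge 6$ any induced $C_k$ in $H$ contains an induced $2K_2$ formed by two edges at cyclic distance at least two along the cycle, contradicting $2K_2$-freeness. Hence $H$ has no induced odd cycle, and since a shortest odd cycle is automatically induced, $H$ has no odd cycle at all, i.e., $H$ is bipartite. Next, two disjoint edges drawn from distinct non-trivial components of $H$ would form an induced $2K_2$, so $H$ decomposes as a set $Z$ of isolated vertices (precisely the universal vertices of $G$) together with at most one non-trivial connected bipartite component $H_0$ with bipartition $(X,Y)$. Inside $H_0$, the $2K_2$-freeness forces the neighborhoods on each side to be linearly ordered by inclusion: if $a,a'\in X$ had incomparable neighborhoods, then choosing $b\in N_{H_0}(a)\setminus N_{H_0}(a')$ and $b'\in N_{H_0}(a')\setminus N_{H_0}(a)$ would produce an induced $2K_2$ on $\{a,b,a',b'\}$, and similarly on the $Y$ side.

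Finally I would match this chain structure to $\mc{B}_{2\ell}$. Group $X$ into equivalence classes $X_1,\ldots,X_\ell$ under equal $H_0$-neighborhood, ordered so that $N_{H_0}(X_1)\supsetneq\cdots\supsetneq N_{H_0}(X_\ell)$; by connectivity of $H_0$, we have $N_{H_0}(X_1)=Y$. Define $Y_j$ to be the set of $y\in Y$ with $N_{H_0}(y)=X_1\cup\cdots\cup X_{\ell-j+1}$; each strict inclusion in the $X$-chain guarantees that the corresponding layer in $Y$ is nonempty, and $N_{H_0}(X_\ell)$ itself is nonempty, so $Y$ partitions into exactly $\ell$ classes. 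A direct check shows that $x\in X_i$ is $H_0$-adjacent to $y\in Y_j$ iff $i+j\le\ell+1$, hence in $G$ one has $X_i\sim Y_j$ iff $j\ge\ell-i+2$, which is precisely the staircase adjacency in the definition of $\mc{B}_{2\ell}$. Since $X$ and $Y$ are independent in $H$, they are cliques in $G$, so $G[X\cup Y]\in\mc{B}_{2\ell}$; adjoining $Z$ as universal vertices then places $G\in\mc{B}_{2\ell+1}$ when $Z\ne\emptyset$, while the degenerate case $H_0=\emptyset$ gives a complete $G\in\mc{B}_1$. The main obstacle is the purely combinatorial bookkeeping at the end — verifying that both sides of $H_0$ carry the same number $\ell$ of nonempty classes and that the two orderings align with the staircase rule — which is routine but easy to mis-index.
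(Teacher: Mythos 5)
Your argument is correct. Note, however, that the paper contains no proof of this statement at all: it is quoted as Lemma~4 of Petrovi\'c \cite{Petr}, so there is no in-paper argument to compare with, and what you have written is in effect a self-contained replacement for that citation. Your route is the natural elementary one: pass to $H=\overline{G}$, observe that $H$ is triangle-free, $C_5$-free and $2K_2$-free, deduce bipartiteness (a shortest odd cycle is induced, and induced odd cycles of length at least $6$ contain an induced $2K_2$), note that $2K_2$-freeness forces at most one non-trivial component, and then use the standard characterization of $2K_2$-free bipartite graphs as chain graphs, with neighborhoods on each side linearly ordered by inclusion. Your bookkeeping at the end is also right: every $y$ has $N_{H_0}(y)=X_1\cup\cdots\cup X_{i(y)}$ because the $X$-classes form a strict chain with $N_{H_0}(X_1)=Y$, each value $i(y)\in\{1,\ldots,\ell\}$ is realized (the strict inclusions give $1\le i(y)\le \ell-1$, and $N_{H_0}(X_\ell)\ne\emptyset$ gives $i(y)=\ell$), so both sides carry exactly $\ell$ classes, and $X_i$ is adjacent to $Y_j$ in $G$ precisely when $j\ge \ell-i+2$, which is the staircase adjacency in the definition of $\mc{B}_{2\ell}$; the isolated vertices of $H$ are universal in $G$ and yield $\mc{B}_{2\ell+1}$, while the edgeless case of $H$ yields $\mc{B}_1$. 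Compared with Petrovi\'c's treatment, which is embedded in a spectral study of graphs with few nonnegative eigenvalues, your complement-plus-chain-graph argument is shorter and purely combinatorial, and could be included verbatim if one wanted the paper to be self-contained on this point.
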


\begin{figure}[ht!]
\centering
\parbox{2in}{%
\centering
\begin{tikzpicture}
  [scale=0.5, auto=right, every node/.style={circle, draw, fill=white,
   inner sep=2pt, minimum width=18pt}]
  \node (q3) at (2 cm, 1 cm) {$q_3$};
  \node (q2) at (2 cm, -1 cm) {$q_2$};
  \node (q1) at (2 cm, -3 cm) {$q_1$};
  \node (p1) at (-2 cm, 3 cm) {$p_1$};
  \node (p2) at (-2 cm, 1 cm) {$p_2$};
  \node (p3) at (-2 cm, -1 cm) {$p_3$};

  \foreach \from/\to in {p2/q3, p3/q2, p3/q3, p1/p2, p2/p3, q1/q2, q2/q3}
    \draw (\from) -- (\to);

  \path[every edge/.style={draw}]
    (p1) edge[out=225,in=135] (p3)
    (q3) edge[out=-45,in=45] (q1);

\end{tikzpicture}
\caption*{$\mc{B}_6$}
}
\parbox{2in}{%
\centering
\begin{tikzpicture}
  [scale=0.5, auto=right, every node/.style={circle, draw, fill=white,
   inner sep=2pt, minimum width=18pt}]
  \node (q3) at (2 cm, 1 cm) {$q_3$};
  \node (q2) at (2 cm, -1 cm) {$q_2$};
  \node (q1) at (2 cm, -3 cm) {$q_1$};
  \node (p1) at (-2 cm, 3 cm) {$p_1$};
  \node (p2) at (-2 cm, 1 cm) {$p_2$};
  \node (p3) at (-2 cm, -1 cm) {$p_3$};
  \node (r) at  (2 cm, 3 cm) {$r$};

  \foreach \from/\to in {p2/q3, p3/q2, p3/q3, p1/p2, p2/p3, q1/q2,
                         q2/q3, r/p1, r/p2,r/p3,r/q3}
    \draw (\from) -- (\to);

  \path[every edge/.style={draw}]
    (p1) edge[out=225,in=135] (p3)
    (q3) edge[out=-45,in=45] (q1)
    (r) edge[out=-45,in=45] (q2)
    (r) edge[out=-40,in=40] (q1);

\end{tikzpicture}
\caption*{$\mc{B}_7$}
}
\parbox{2.5in}{%
\centering
\begin{tikzpicture}
  [scale=0.5, auto=right, every node/.style={circle, draw, fill=white,
   inner sep=2pt, minimum width=18pt}]
  \node[anchor=east] (endp1) at (2,1.2) {$p$};
  \node[anchor=west,draw=none, fill=none] (desc1) at (2.3,1.2)
    {$=$ Complete graph $K_p$.};
  \node[anchor=east] (endp21) at (0,-0.8) {$p$};
  \node[anchor=east] (endp22) at (2,-0.8) {$q$};
  \node[anchor=west,draw=none, fill=none] (desc2) at (2.3,-0.8)
    {$=$ Complete graph $K_{p+q}$.};

  \path[every edge/.style={draw}]
    (endp21) edge (endp22);

\end{tikzpicture}
}
\caption{Some examples of graphs in $\mc{B}_6 \cup \mc{B}_7$.}
\label{fig:bk_graphs}
\end{figure}

We now present the proof of Step 6.

\begin{proposition}

  \label{prop:step_6}
  For any solution $\valpha$ to $\OPT(s)$ not having induced copies of
  $3K_1$ and $C_4$ in   its support graph $\SUPP(\valpha)$, it is true
  that $\SUPP(\valpha)\in \lt(\cup_{k\ge\cei{s}}\mc{P}_k\rt)\cup
  \mc{Q}_\cei{s}$.

\end{proposition}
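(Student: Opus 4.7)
The plan is to combine the structural classification of \lemref{lem:bk} with Steps 1--5 to pin down $\SUPP(\valpha)$. By \propref{prop:step_1} the solution $\valpha$ has no induced $C_5$ in $\SUPP(\valpha)$, so together with the hypothesis (no induced $3K_1$ or $C_4$), \lemref{lem:bk} gives $\SUPP(\valpha)\in\bigcup_{t\ge 1}\mc{B}_t$; \propref{prop:step_2} additionally rules out any induced $2K_2$. Passing to the complement, $\overline{\SUPP(\valpha)}$ is triangle-free, $C_4$-free, $C_5$-free, and contains no induced $2K_2$. The first three conditions give girth at least $6$, and any cycle of length $\ge 6$ would contain two vertex-disjoint non-adjacent edges inducing a $2K_2$, so $\overline{\SUPP(\valpha)}$ is a forest; edges from different components again induce a $2K_2$, so at most one component carries edges; a component of diameter $\ge 4$ contains $P_5$, whose outer edges form an induced $2K_2$. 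Hence $\overline{\SUPP(\valpha)}$ is a collection of isolated vertices together with at most one non-trivial component, which is either a star $K_{1,n}$ ($n\ge 1$) or a double star $S(a,b)$ ($a,b\ge 1$).

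If the non-trivial component is empty, $\SUPP(\valpha)$ is a complete graph, so by \propref{prop:color_cover} its support is a partition of $[q]$ and $\SUPP(\valpha)\in\mc{P}_k$; the identity $\sum_A\alpha_A^2=\frac{1}{s}$ from \propref{prop:tight} together with Cauchy--Schwarz $\sum_A\alpha_A^2\ge\frac{1}{k}$ force $k\ge s$, hence $k\ge \cei{s}$. If the non-trivial component is a star $K_{1,n}$, then $\SUPP(\valpha)$ is a clique with a star of $n$ petals removed (the dominating vertices of $\SUPP(\valpha)$ correspond to the isolated vertices of the complement), and \propref{prop:step_4} forces $n=2$ and $\SUPP(\valpha)\in\mc{Q}$, whence \propref{prop:step_5} yields $\SUPP(\valpha)\in\mc{Q}_\cei{s}$.

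The remaining case---the double star $S(a,b)$ with $a,b\ge 1$---is the main obstacle, since no single forbidden-subgraph condition excludes it. Write its central edge as $uv$, its leaves as $l_1,\ldots,l_a$ (attached to $u$) and $l'_1,\ldots,l'_b$ (attached to $v$), and let $z_1,\ldots,z_c$ be the isolated vertices of $\overline{\SUPP(\valpha)}$. The non-edges in $\SUPP(\valpha)$ are exactly $uv$, $ul_i$, and $vl'_j$, so every neighbor of $u$ in $\SUPP(\valpha)$---namely the $l'_j$ and $z_k$---is disjoint from $l_i$. Applying \propref{prop:supp_inter} to the intersecting pair $(l_i,u)$ therefore rules out its second alternative and forces $l_i\subseteq u$; since $l_i\neq u$, in fact $l_i\subsetneq u$ and $|l_i|<|u|$. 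Symmetrically, $|l'_j|<|v|$. But $u-l'_j-l_i-v$ is an induced $P_4$ in $\SUPP(\valpha)$, and the strict inequalities $|u|>|l_i|$, $|v|>|l'_j|$ then contradict \propref{prop:step_3}. This last step is the heart of the argument: the complement of a double star is compatible with every induced-subgraph restriction from Steps 1--2, and only the combination of the strict containment forced by \propref{prop:supp_inter} with the path inequality of \propref{prop:step_3} finally excludes it.
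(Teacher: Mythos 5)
Your proof is correct, and it takes a genuinely different route to the structural classification than the paper does. The paper first invokes Petrovi\'c's characterization (\lemref{lem:bk}) of $\{3K_1,C_4,C_5\}$-free graphs to place $\SUPP(\valpha)$ in $\bigcup_t \mc{B}_t$, then uses the no-induced-matching property of \propref{prop:step_2} to force $t\le 3$ and uses \propref{prop:supp_inter} plus \propref{prop:step_3} to eliminate $\mc{B}_4$ and $\mc{B}_5$, before finishing with Steps 4 and 5 exactly as you do. You instead bypass \lemref{lem:bk} entirely: feeding the $2K_2$-freeness of \propref{prop:step_2} in from the start, you observe that the complement of $\SUPP(\valpha)$ is $\{K_3,C_4,C_5,2K_2\}$-free, hence a forest with at most one nontrivial component of diameter at most $3$, so $\SUPP(\valpha)$ is a clique, a clique minus a star, or a clique minus a double star; the clique case gives $\mc{P}_k$ with $k\ge\cei{s}$ via \propref{prop:tight} and Cauchy--Schwarz (the same computation as \eqref{eqn:Qk_bound_k}), the star case is handled by \propref{prop:step_4} and \propref{prop:step_5}, and the double-star case is killed by the same mechanism the paper uses against $\mc{B}_4$: \propref{prop:supp_inter} forces each leaf-set strictly inside its star center, producing an induced $P_4$ with the size pattern forbidden by \propref{prop:step_3}. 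Your version is more elementary and self-contained (no appeal to the eigenvalue-based classification of \cite{Petr}), at the cost of redoing a small amount of forbidden-subgraph structure theory by hand; the paper's version outsources that structure to a known lemma and keeps the case analysis phrased on the primal graph. One presentational remark: when concluding that the complement is a forest, it is worth noting explicitly that triangle-freeness makes every $4$-cycle and $5$-cycle induced (or simply work with a shortest cycle), so that the induced-$C_4$/$C_5$ hypotheses really exclude all such cycles; this is immediate but should be said.
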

\begin{proof}
Let $\valpha$ be an solution to $\OPT(s)$ such that $\SUPP(\valpha)$
contains neither $3K_1$ nor $C_4$ as an induced subgraph. We know, by
\propref{prop:step_1} (Step 1), that $\SUPP(\valpha)$ does not
contain $C_5$ as induced subgraph as well. By \lemref{lem:bk},
$\SUPP(\valpha)\in \bigcup_{t\ge 1}\mc{B}_t$. We will show that
$\SUPP(\valpha) \in \mc{P} \cup \mc{Q}_\cei{s}$.

First we show that $t\le 5$. Suppose not. Then $\ell:=\flo{t/2}\ge 3$
and there exists an induced subgraph $G$ of $\SUPP(\valpha)$ such that
$G\in \mc{B}_{2\ell}$. Recall the definition of $\mc{B}_{2\ell}$. We choose
four vertices $x_1$,$x_2$,$y_1$,$y_2$ of $G$ such that $x_1\in X_1$,
$x_2\in X_2$, $y_1\in Y_1$, $y_2\in Y_2$, then $\{x_1,x_2,y_1,y_2\}$ induces
a matching of size two in $\SUPP(\valpha)$, contradicting
\propref{prop:step_2} (Step 2).

We claim that $1\le t\le 3$. Suppose not, then either $t=4$ or $t=5$.
There exists an induced subgraph $G$ of $\SUPP(\valpha)$ such that
$G\in \mc{B}_{4}$. So $V(G)=X_1\cup X_2\cup Y_1\cup Y_2$. If there
exist two vertices $x_1,x_2$ in $X_1$, then $\{x_1,x_2,y_1,y_2\}$ induces
a matching of size two in $\SUPP(\valpha)$ for any $y_1\in Y_1, y_2\in Y_2$,
contradicting \propref{prop:step_2} again.
Therefore, we may assume that $X_1=\{A\}$ and by symmetry $Y_1=\{B\}$ for
$A,B\subseteq [q]$. Let $X_2=\{C_1,\ldots,C_l\}$ and $Y_2=\{D_1,\ldots,D_m\}$,
where $C_i,D_j\subseteq [q]$. Since $X_2 \cup Y_2$ induces a clique, the
sets $\bigcup_{i=1}^l C_i$ and $\bigcup_{j=1}^m D_j$ are disjoint.
By similar reasons, $A\cap \lt(\bigcup_{i=1}^l C_i\rt)= \emptyset$ and
$B \cap \lt(\bigcup_{j=1}^m D_j\rt)= \emptyset$. \propref{prop:supp_inter}
implies that $A\cup C_1\cup\ldots\cup C_l=B\cup D_1\cup\ldots
\cup D_m$. Then clearly $C_1\subseteq B\setminus A \subset B$
and $D_1\subseteq A\setminus B \subset A$. Now, $\{A,C_1,D_1,B\}$
induces a path $A-C_1-D_1-B$ in $\SUPP(\valpha)$ with $|A|>|D_1|$
and $|B|>|C_1|$, contradicting \propref{prop:step_3} (Step 3).

If $t=1$ then it is easy to see that $\SUPP(\valpha)\in \mc{P}$
and by \eqref{eqn:Qk_bound_k} we must have $\SUPP(\valpha)\in
\bigcup_{k\ge \cei{s}} \mc{P}_k$. Otherwise, we may assume that
$\SUPP(\valpha)\in \mc{B}_2 \cup \mc{B}_3$. The support of $\valpha$
can be expressed as a union $X\cup Y\cup Z$ (possibly with $Z=\emptyset$)
such that each of $X\cup Z$ and $Y\cup Z$ induces a clique in
$\SUPP(\valpha)$ and there is no edge between $X$ and $Y$.

If $|X|\ge 2$ and $|Y|\ge 2$, then $\SUPP(\valpha)$ has an induced matching of
size two, which cannot happen by \propref{prop:step_2} (Step 2).
Thus, we may assume that $|Y|=1$ and thereby $\SUPP(\valpha)$ can be
viewed as a graph obtained from a clique by removing a star. Now
\propref{prop:step_4} (Step 4) and \propref{prop:step_5} (Step 5)
together imply that $\SUPP(\valpha)\in \mc{Q}_\cei{s}$. This
finishes the proof of Step 6.
\end{proof}

\begin{corollary}

  \label{cor:step_6}
  There exists a solution $\valpha$ to $\OPT(s)$ such that $\SUPP(\valpha)\in
  \lt(\cup_{k\ge\cei{s}}\mc{P}_k\rt)\cup \mc{Q}_\cei{s}$.

\end{corollary}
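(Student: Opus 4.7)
The plan is to obtain this corollary as an immediate consequence of the two preceding results, namely \propref{prop:step_1} (Step 1) and \propref{prop:step_6} (Step 6). The point of \propref{prop:step_6} is that any solution whose support graph avoids induced copies of $3K_1$ and $C_4$ already has the desired structure, so all that remains is to produce one such solution.

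First I would invoke \propref{prop:step_1}, which gives the existence of a solution $\valpha$ to $\OPT(s)$ such that $\SUPP(\valpha)$ contains no induced copy of $3K_1$, $C_5^{+}$, or $C_4$. In particular, this $\valpha$ avoids the two forbidden induced subgraphs $3K_1$ and $C_4$ that are required as hypotheses of \propref{prop:step_6}. Applying \propref{prop:step_6} to this specific $\valpha$ then yields $\SUPP(\valpha)\in\lt(\cup_{k\ge\cei{s}}\mc{P}_k\rt)\cup\mc{Q}_\cei{s}$, which is exactly the conclusion of the corollary.

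There is essentially no obstacle here; the corollary is a bookkeeping step that isolates the existence assertion needed for Step 7. The real content lies in \propref{prop:step_1} (which uses the eigenvalue reduction in \propref{prop:3_eigenvalues} together with the priority rule $3K_1 > C_5^{+} > C_4$ to guarantee that the reduction process terminates at a solution avoiding all three forbidden induced subgraphs) and in \propref{prop:step_6} (which uses \lemref{lem:bk} to reduce to the classes $\mc{B}_t$ and then Steps 2--5 to rule out everything outside $\mc{P}\cup\mc{Q}_\cei{s}$). The only thing to verify in this wrap-up is that the hypothesis of \propref{prop:step_6}, asking for the absence of induced $3K_1$ and induced $C_4$, is strictly weaker than (hence implied by) the conclusion of \propref{prop:step_1}, which additionally forbids $C_5^{+}$.
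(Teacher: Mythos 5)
Your proposal is correct and is exactly the intended argument: the paper derives \corref{cor:step_6} immediately by combining \propref{prop:step_1} (which supplies a solution whose support graph avoids induced $3K_1$, $C_5^+$ and $C_4$) with \propref{prop:step_6} (which only needs the absence of induced $3K_1$ and $C_4$). No gaps.
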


\subsection{Step 7: the structure of all solutions}

In this subsection we characterize all solutions to the \OPTprob.
Namely, we prove that not only we can find a solution $\valpha$ for which
$\SUPP(\valpha)\in \mc{P}\cup \mc{Q}_\cei{s}$ as \corref{cor:step_6}
asserts, but in fact, all solutions to $\OPT(s)$ necessarily satisfy
$\SUPP(\valpha)\in \mc{P}\cup \mc{Q}_\cei{s}$.

Among the first six steps in the proof of \thmref{thm:structure},
Step 1 is the only step that fails to assert a property that all solutions
necessarily enjoy, as it does not prevent the existence of a solution
$\valpha$ such that $\SUPP(\valpha)$ contains either $3K_1$ or $C_4$ as
induced subgraphs. To remedy this situation, we prove the following
three propositions.

\begin{proposition}

  \label{prop:cant_walk_3K1}
  If $\valpha$ is a solution to $\OPT(s)$ and $\valpha'$
  is another solution to $\OPT(s)$ obtained from $\valpha$ by an application of
  \propref{prop:3_eigenvalues} with $H$ being isomorphic to
  $3K_1$, then $\SUPP(\valpha') \not\in \mc{P}\cup \mc{Q}$.

\end{proposition}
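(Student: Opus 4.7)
The plan is a case analysis on the structure of $\SUPP(\valpha')$. By the construction in the proof of \propref{prop:3_eigenvalues}, we have $\valpha'=\valpha+\eps_0\vgamma$ for some vector $\vgamma$ supported on the three vertices $A_1,A_2,A_3\in\SUPP(\valpha)$ forming the induced $3K_1$, with $\eps_0$ chosen so that exactly one of $\alpha_{A_1},\alpha_{A_2},\alpha_{A_3}$ becomes zero. Assume without loss of generality that $A_3$ is the zeroed-out coordinate; then $A_1,A_2\in\SUPP(\valpha')$ and $A_1\cap A_2\ne\emptyset$ since they come from a $3K_1$.

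If $\SUPP(\valpha')\in\mc{P}$, then the support is a partition of $[q]$ with all pairs pairwise disjoint, immediately contradicting $A_1\cap A_2\ne\emptyset$, so this case is disposed of right away.

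The bulk of the work is the case $\SUPP(\valpha')\in\mc{Q}_k$. Writing $\SUPP(\valpha')=\{B_1,\ldots,B_k,B_1\cup B_2\}$, the only intersecting pairs in this support are $(B_i,B_1\cup B_2)$ for $i\in\{1,2\}$, so $\{A_1,A_2\}$ must coincide with one such pair; I may assume $A_1=B_1\cup B_2$ and $A_2=B_1$. Then $A_3$ is a set in $\SUPP(\valpha)$ distinct from the $B_i$'s and from $B_1\cup B_2$, satisfying $A_3\cap B_1\ne\emptyset$. I would then subdivide according to $J:=\{i\in[k] : A_3\cap B_i\ne\emptyset\}$, which always contains $1$. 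When $J=\{1\}$ (so $A_3\subsetneq B_1$), I observe that every $X\in\SUPP(\valpha)$ satisfies $X\cap A_3=\emptyset$ iff $X\cap B_1=\emptyset$, whence $\beta_{A_3}=\beta_{B_1}$. The KKT stationarity equations $\log|A_3|=\mu+\lambda\beta_{A_3}$ and $\log|B_1|=\mu+\lambda\beta_{B_1}$ then force $|A_3|=|B_1|$, contradicting $A_3\subsetneq B_1$. An entirely analogous comparison against $B_1\cup B_2$ handles the sub-case $J=\{1,2\}$, where $A_3\subsetneq B_1\cup B_2$ yet $|A_3|=|B_1|+|B_2|$ is forced.

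The main obstacle is the remaining sub-case, where $J$ meets $\{3,\ldots,k\}$, i.e.\ $A_3$ has elements outside $B_1\cup B_2$. Here no direct $\beta$-equality is available, and the plan is to return to the three perpendicularity relations $\vgamma\perp\mathbf{1},\mathbf{g},\vbeta$ coming from \propref{prop:lin_dep}. Parametrizing $\vgamma=(\gamma_1,\gamma_2,\gamma_3)$ with $\gamma_3=-1$ (so that moving along $+\vgamma$ zeros out $A_3$), the first two equations pin down $\gamma_1,\gamma_2$ as explicit logarithmic ratios of $|A_3|,|B_1|,|B_1|+|B_2|$, and the third equation then collapses to a single scalar identity involving these sizes, the coefficient $\alpha_{B_2}$, and $\sum_{i\in J\cap\{3,\ldots,k\}}\alpha_{B_i}$. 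I expect that combining this identity with the explicit KKT expressions for the $\alpha$-coefficients of a $\mc{Q}_k$-solution (as derived in \lemref{lem:fixed_Qk}) will produce a sign contradiction, since the left-hand side is forced positive while the right-hand side involves $\log\bigl((|B_1|+|B_2|)/|A_3|\bigr)$ whose sign cannot be controlled without exploiting the positivity of $\alpha_{B_2}$ and the remaining $\alpha_{B_i}$'s.
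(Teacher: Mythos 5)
The decisive problem is your third sub-case, which you do not actually prove: you only derive that the third perpendicularity relation ``collapses to a single scalar identity'' and then \emph{expect} a sign contradiction. That expectation is unfounded. Carrying out your own computation (normalize $\gamma_3=-1$, solve for $\gamma_1,\gamma_2$ from $\vgamma\perp\mathbf{1},\mathbf{g}$, and substitute into $\vgamma\perp\vbeta$ using $\beta_{B_1}=\sum_{i\ge2}\alpha_{B_i}$, $\beta_{B_1\cup B_2}=\sum_{i\ge3}\alpha_{B_i}$, $\beta_{A_3}=\sum_{i\notin J}\alpha_{B_i}$), the identity reduces to $\gamma_1\alpha_{B_2}=\sum_{i\in J,\,i\ge2}\alpha_{B_i}$ with $\gamma_1=\frac{\log|A_3|-\log|B_1|}{\log(|B_1|+|B_2|)-\log|B_1|}>0$; both sides are positive, so there is no sign contradiction --- the relation merely records how $\vgamma$ was chosen, and extracting a contradiction would require genuinely new input (stationarity for this non-$\mc{Q}_k$ support is not covered by \lemref{lem:fixed_Qk}). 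The missing idea is combinatorial, not analytic: by \propref{prop:supp_inter}, in this sub-case every $B_i$ with $i\in J$ must be contained in $A_3$, so $A_3=\bigcup_{i\in J}B_i$ is a union of parts; then if $2\notin J$ the four sets $B_1\cup B_2$, $B_j$ ($j\in J$, $j\ge3$), $B_2$, $A_3$ induce a path violating \propref{prop:step_3}, and if $2\in J$ the sets $B_1,A_3,B_1\cup B_2,B_2$ induce a one-edge graph admitting no nonzero kernel vector orthogonal to $\mathbf{1}$ and $\mathbf{g}$ (since $|A_3|>|B_1|+|B_2|$), contradicting \obsref{obs:obs_kernel}. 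This is essentially what the paper does, via the maximal partition $\mc{R}\subseteq\SUPP(\valpha')$ and the fact that the two sets of the $3K_1$ outside $\mc{R}$ are unions of members of $\mc{R}$.

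Two further gaps: (a) you assume $\eps_0$ zeroes \emph{exactly one} of the three coordinates, so that two of them survive; nothing in \propref{prop:3_eigenvalues} guarantees this (two coordinates may vanish simultaneously), and both your instant dismissal of the $\mc{P}$ case and your identification $\{A_1,A_2\}=\{B_1,B_1\cup B_2\}$ rely on it --- the paper only uses that at least one of the three sets survives. (b) Your sub-cases $J=\{1\}$ and $J=\{1,2\}$ invoke ``KKT stationarity'' $\log|X|=\mu+\lambda\beta_X$ for the full problem $\OPT(s)$, which the paper never establishes (a constraint qualification would have to be checked); the available tool is \propref{prop:lin_dep}, which gives this only after excluding the degenerate dependence with zero coefficient on $\mathbf{g}$ (here excludable since $\beta_{B_1}-\beta_{B_1\cup B_2}=\alpha_{B_2}>0$, but you must say so). In fact these two sub-cases die immediately from \propref{prop:supp_inter} applied to the pair $(B_1,A_3)$, resp.\ $(B_1\cup B_2,A_3)$, with no stationarity needed, since every support set disjoint from $A_3$ is also disjoint from $B_1$.
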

\begin{proof}
Suppose, towards contradiction, that $\SUPP(\valpha')\in \mc{P}\cup
\mc{Q}$. Let $A$, $B$ and $C$ be the vertices of $H$. Clearly $A$, $B$,
$C$ pairwise intersect, because they induce an independent set of size three.
Moreover, we have $\SUPP(\valpha)-\{A,B,C\}\subset \SUPP(\valpha')
\subset \SUPP(\valpha)$.
Since $\SUPP(\valpha') \in \mc{P}\cup \mc{Q}$, we know that some of
the sets in $\SUPP(\valpha')$ form partition of $[q]$.
Let $\mc{R}$ be such a partition with maximum number of sets. Because
$\mc{R}$ can contain at most one element from $\{A,B,C\}$ and $\alpha'_A
+\alpha'_B+\alpha'_C=\alpha_A+\alpha_B+\alpha_C > 0$, we may
assume, without loss of generality, that $B,C \not\in \mc{R}$ and $A$
is in the support of $\valpha'$. We remark that we do not necessarily
have $A\in \mc{R}$. Since $B\cap C \ne\emptyset$, there exists
a set $A'\in \mc{R}$ such that $A'\cap B \cap C \ne\emptyset$.

We claim that $X\subseteq B$ for any subset $X\in\mc{R}$
intersecting $B$. To prove this, we use \propref{prop:supp_inter}.
If $X\nsubseteq B$, then there exists a set $Y$ in the support of $\valpha$
which is disjoint from $B$ and intersects $X$. The set $Y$ must be in
the support of $\valpha'$ as well, since $Y\not\in\{A,B,C\}$. Because
$X\cap Y \ne \emptyset$, we must have that $\SUPP(\valpha')\in\mc{Q}$,
which implies $X \subseteq Y$ (recall $X\in \mc{R}$). This is a
contradiction, because $Y$ must be disjoint from $B$, thereby proving
the claim. Therefore, there exists a subfamily $\mc{R}_1\subset \mc{R}$
such that $B=\cup_{X\in \mc{R}_1} X$.
By switching the roles of $B$ and $C$ in the previous argument,
we conclude that there exists $\mc{R}_2\subset \mc{R}$ such that
$C=\cup_{X\in \mc{R}_2} X$. As $B,C\notin \mc{R}$, we see that
$|\mc{R}_1|\ge 2$ and $|\mc{R}_2|\ge 2$;
as $A'\cap B\cap C\ne \emptyset$, we must have $A' \subseteq B\cap C$.
We assume, from now on, that $|B| \ge |C|$.

We have two cases to consider:

\begin{enumerate}[(i) -]
\item $C\nsubseteq B$. In this case, we have $B-C\ne \emptyset$ and
$C-B\ne \emptyset$, so there exist two sets $D,E\in\mc{R}\setminus \{A'\}$
such that $D \subseteq B$, $E\subseteq C$, and $D\cap C = E \cap B =
\emptyset$; or
\item $C\subset B$. Since $|\mc{R}_2|\ge 2$, there exists a set
$D\in \mc{R}\setminus \{A'\}$ such that $D\subseteq B \cap C$.
\end{enumerate}
In the first case (i), the sets $B,C,D,E$ in the support of $\valpha$
induce a path $C-D-E-B$ satisfying $|C| > |E|$ and $|B| > |D|$.
But this is forbidden by \propref{prop:step_3} (Step 3).
In the second case (ii), if $H'$ is the subgraph induced by
the sets $A', B, C, D$, then the adjacency matrix of $H'$
with respect to the sets $A', B, C, D$ (in that order) is
\[
  M= \begin{pmatrix}
    0 & 0 & 0 & 1 \\
    0 & 0 & 0 & 0 \\
    0 & 0 & 0 & 0 \\
    1 & 0 & 0 & 0
  \end{pmatrix}.
\]
The matrix $M$ has three nonnegative eigenvalues and its
kernel is spanned by the vectors $(0,1,0,0)$ and $(0,0,1,0)$.
However, there is no vector $\mathbf{v}$ in the kernel of $M$ such
that $\mathbf{v}$ is perpendicular to both
\[
  (1, 1, 1, 1)\quad\text{and}\quad
  (\log |A'|, \log |B|, \log |C|, \log |D|),
\]
since $\log|B| > \log |C|$ in (ii). Hence, by \obsref{obs:obs_kernel},
$\valpha$ cannot be an optimal solution to $\OPT(s)$, finishing
the proof of the proposition.
\end{proof}

\begin{proposition}

  \label{prop:cant_walk_C5plus}
  If $\valpha$ is a solution to $\OPT(s)$ and $\valpha'$
  is another solution to $\OPT(s)$ obtained from $\valpha$ by an application of
  \propref{prop:3_eigenvalues} with $H$ being isomorphic to
  $C_5^+$, then $\SUPP(\valpha') \not\in \mc{P}\cup \mc{Q}$.

\end{proposition}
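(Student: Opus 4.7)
My plan is to follow the template of \propref{prop:cant_walk_3K1}: assume for contradiction that $\SUPP(\valpha')\in\mc{P}\cup\mc{Q}$, and derive a contradiction from the $C_5^+$ configuration in $\SUPP(\valpha)$. Label the vertices of $H\cong C_5^+$ as $A_1,\ldots,A_5$ with the $5$-cycle $A_1A_2A_3A_4A_5$ and chord $A_1A_3$, so that the disjoint (adjacent) pairs of sets are $A_1A_2,A_2A_3,A_1A_3,A_3A_4,A_4A_5,A_5A_1$ and the intersecting (non-adjacent) pairs are $A_1A_4,A_2A_4,A_2A_5,A_3A_5$. Note that any graph in $\mc{P}\cup\mc{Q}$ has at most two non-edges, and in the $\mc{Q}$ case these two non-edges share a common endpoint: the extra set $R_1\cup R_2$, whose unique non-neighbors are $R_1$ and $R_2$.

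A direct computation shows that the adjacency matrix of $C_5^+$ has one-dimensional kernel spanned by $\vgamma=(-1,0,1,1,-1)^T$ in the order $A_1,\ldots,A_5$. By \obsref{obs:obs_kernel}, the perturbation $\alpha'_{A_i}=\alpha_{A_i}+\eps\gamma_i$ fixes $\alpha_{A_2}$ and modifies only the coordinates indexed by $A_1,A_3,A_4,A_5$. Since $\SUPP(\valpha')$ is strictly contained in $\SUPP(\valpha)$, at least one of these four coordinates becomes zero. The automorphism of $C_5^+$ swapping $A_1\leftrightarrow A_3$ and $A_4\leftrightarrow A_5$ (fixing $A_2$) negates $\vgamma$, so after relabeling I may assume $\eps>0$, in which case the vanishing coordinate(s) lie in $\{A_1,A_5\}$.

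The argument then splits into three cases. If only $A_1$ drops, then $\{A_2,A_3,A_4,A_5\}$ induces a $P_4$ in $\SUPP(\valpha')$ containing the three non-edges $A_2A_4, A_2A_5, A_3A_5$, exceeding the at-most-two-non-edges bound for graphs in $\mc{P}\cup\mc{Q}$, contradiction. If only $A_5$ drops, then $\{A_1,A_2,A_3,A_4\}$ induces a paw (triangle $A_1A_2A_3$ plus pendant $A_3$--$A_4$) whose two non-edges $A_1A_4$ and $A_2A_4$ both meet at $A_4$; matching this to the $\mc{Q}$-structure forces $A_4$ to be the extra set $R_1\cup R_2$ with $\{R_1,R_2\}=\{A_1,A_2\}$, so $A_4=A_1\cup A_2$. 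But then $A_4\cap A_5\supseteq A_2\cap A_5\ne\emptyset$ by the $C_5^+$ non-edge $A_2A_5$, contradicting the $C_5^+$ edge $A_4A_5$. In the degenerate case where both $A_1$ and $A_5$ drop simultaneously, $\{A_2,A_3,A_4\}$ induces a $P_3$ with single non-edge $A_2A_4$, and the $\mc{Q}$-structure forces either $A_2\supseteq A_4$ (whence $A_1\cap A_2\supseteq A_1\cap A_4\ne\emptyset$, contradicting the $C_5^+$ edge $A_1A_2$) or $A_4\supseteq A_2$ (whence $A_4\cap A_5\supseteq A_2\cap A_5\ne\emptyset$, contradicting the $C_5^+$ edge $A_4A_5$).

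The main obstacle is the paw case, where one must correctly identify the pendant endpoint $A_4$ as the extra set $R_1\cup R_2$ of the $\mc{Q}$-description and deduce $A_4=A_1\cup A_2$. All three contradictions crucially use the fact that vertices dropped from $\SUPP(\valpha')$ (such as $A_1$ or $A_5$) still belong to $\SUPP(\valpha)$, so their intersection/disjointness relations with the surviving sets remain intact and can be combined with the $\mc{Q}$-structure to yield the contradiction.
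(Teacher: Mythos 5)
Your proof is correct and follows essentially the same route as the paper: both rest on \obsref{obs:obs_kernel} and the one-dimensional kernel of $C_5^+$ (which vanishes at the vertex adjacent to both chord endpoints), combined with the fact that a support graph in $\mc{P}\cup\mc{Q}$ has at most two non-edges, with each intersecting pair nested. The paper's finish is shorter than your three-case analysis: it only observes that $A_2$ and $A_4$ (its $E$ and $C$) both survive in $\SUPP(\valpha')$ and intersect, so the $\mc{Q}$-structure forces one to contain the other, and either containment contradicts an edge of the $C_5^+$ exactly as in your cases 2 and 3.
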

\begin{proof}
Suppose, towards contradiction, that $\SUPP(\valpha')\in \mc{P}\cup
\mc{Q}$. Let sets $A$, $B$, $C$, $D$ and $E$ induce the subgraph $H$
(see \figref{fig:C5plus}), whose adjacency matrix (with respect to the
order of $A,B,C,D,E$) is
\[
  M= \begin{pmatrix}
    0 & 1 & 0 & 1 & 1 \\
    1 & 0 & 1 & 0 & 1 \\
    0 & 1 & 0 & 1 & 0 \\
    1 & 0 & 1 & 0 & 0 \\
    1 & 1 & 0 & 0 & 0
  \end{pmatrix}.
\]
The matrix $M$ has three nonnegative eigenvalues and has kernel spanned by
the vector $(-1, 1, 1, -1, 0)$. Therefore, the vector $\vgamma$ yielded by
the proof of \propref{prop:3_eigenvalues} for $H$ has the form
$\vgamma=(\gamma_A,\gamma_B,\gamma_C,\gamma_D, 0)$, where
$\gamma_A = \gamma_D=-\gamma_B=-\gamma_C$. Then either $C$ or $D$ is
in the support of $\valpha'$ (since either $\gamma_C\ge 0$ or
$\gamma_D \ge 0$); by the symmetry between $C$ and $D$, we may
assume that $C\in \SUPP(\valpha')$. We point out that
$E\in \SUPP(\valpha')$, as the coordinate $\valpha_E$ is not changed.
In addition, we have $C\cap E\ne \emptyset$. This implies that
$\SUPP(\valpha')\in \mc{Q}$; and moreover, either $C\subset E$
or $E\subset C$. If $C\subset E$, together with the fact that
$A\cap E=\emptyset$, we get that $A\cap C=\emptyset$, contradicting
our definition of $C_5^+$. If $E\subset C$, plus $C\cap D=\emptyset$,
we get that $E\cap D=\emptyset$, again contradicting the definition of $C_5^+$.
This finishes the proof.
\end{proof}

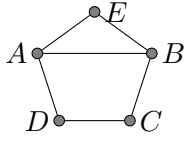
\begin{figure}[ht!]
\centering
\begin{tikzpicture}
  [scale=0.8, auto=left, every node/.style={circle, draw, fill=black!50,
   inner sep=0pt, minimum width=4pt}]
  \node (n1) at (18:1 cm) [label=right:$B$]{};
  \node (n2) at (90:1 cm) [label=right:$E$]{};
  \node (n3) at (162:1 cm) [label=left:$A$]{};
  \node (n4) at (234:1 cm) [label=left:$D$]{};
  \node (n5) at (306:1 cm) [label=right:$C$]{};

  \foreach \from/\to in {n1/n2, n2/n3, n3/n4, n4/n5, n5/n1,n1/n3}
    \draw (\from) -- (\to);
\end{tikzpicture}
\caption{graph $C_5^+$}
\label{fig:C5plus}
\end{figure}

\begin{proposition}

  \label{prop:cant_walk_C4}
  Let $\valpha$ be a solution to $\OPT(s)$ and $\valpha'$
  be another solution to $\OPT(s)$ obtained from $\valpha$ by an application of
  \propref{prop:3_eigenvalues} with $H$ being isomorphic to
  $C_4$. If $\SUPP(\valpha)$ does not contain an induced copy
  of $3K_1$ or $C_5^+$, then $\SUPP(\valpha') \not\in \mc{P}\cup \mc{Q}$.

\end{proposition}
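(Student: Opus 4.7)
The plan is to suppose $\SUPP(\valpha')\in \mc{P}\cup\mc{Q}$ and derive a contradiction, following the template of \propref{prop:cant_walk_3K1} and \propref{prop:cant_walk_C5plus} for the preliminary structural analysis but closing the argument with a KKT comparison between $\valpha$ and $\valpha'$. Let $A,B,C,D$ be the four sets inducing the $C_4$, so that $A$-$B$, $B$-$C$, $C$-$D$, $D$-$A$ are the edges (disjoint pairs) and $A$-$C$, $B$-$D$ are the non-edges (intersecting pairs). Since the eigenvalues of $C_4$ are $2,0,0,-2$ and its adjacency matrix has a two-dimensional kernel, \obsref{obs:obs_kernel} forces the vector $\vgamma$ from \propref{prop:3_eigenvalues} to have the form $(a,b,-a,-b)$ in coordinates $(A,B,C,D)$ with $(a,b)\ne(0,0)$. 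Only one coordinate becomes zero; using the rotational relabeling symmetry of $C_4$ we may assume $\alpha'_A=0$ and thus $\SUPP(\valpha')=\SUPP(\valpha)\setminus\{A\}$. The case $\SUPP(\valpha')\in\mc{P}$ is ruled out at once since $B,D\in\SUPP(\valpha')$ and $B\cap D\ne\emptyset$.

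Now assume $\SUPP(\valpha')\in\mc{Q}$. By \propref{prop:step_5}, $\SUPP(\valpha')\in\mc{Q}_{\cei{s}}$, and its unique intersecting pair must be $\{B,D\}$. By the $C_4$ reflection symmetry we may assume $D\subsetneq B$, so $D$ is a part and $B=D\cup F$ is the distinguished union for another part $F\in\SUPP(\valpha')$. Applying \propref{prop:supp_inter} to $(C,A)$ in $\SUPP(\valpha)$: every color $c\in C$ can lie only in $C$ or in $A$ among the sets of $\SUPP(\valpha)$, because every other set is either a part of the $\mc{Q}_{\cei{s}}$-structure disjoint from $C$ or is the union $B=D\cup F$, which contains no color of $C$. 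Since $C$ is not disjoint from $A$, this forces $C\subseteq A$, and therefore $|A|=|C|$ would already force $A=C$.

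To close, we compare the KKT stationarity conditions for both solutions. A direct computation using $\vgamma=(a,b,-a,-b)$ and $\alpha'_A=0$ yields
\[
  \beta'_B=\beta_B,\quad \beta'_C=\beta_C,\quad \beta'_D=\beta_D,\quad \beta'_F=\beta_F-\eps b,
\]
where $\beta_X:=\sum_{Y\cap X=\emptyset}\alpha_Y$ and $\beta'_X$ is the analogue for $\valpha'$. Subtracting the stationarity $\log|X|=\mu+\lambda\beta_X=\mu'+\lambda'\beta'_X$ at $X\in\{B,D\}$, the fact that $\beta_B-\beta_D=-\alpha_F\ne 0$ forces $\lambda=\lambda'$ and $\mu=\mu'$. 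The equation at $X=F$ then collapses to $\lambda\eps b=0$; if $\lambda=0$ then every size in $\SUPP(\valpha)$ would be equal, giving $|A|=|C|$ and hence $A=C$, a contradiction. Thus $b=0$, and substituting into the orthogonality $a\log(|A|/|C|)+b\log(|B|/|D|)=0$ gives $|A|=|C|$, again contradicting $A\neq C$. The main obstacle will be the verification of the four $\beta'$-identities, since the cancellation that makes $\beta'_X=\beta_X$ for $X\in\{B,C,D\}$ is particular to the sign-symmetric form $(a,b,-a,-b)$ of the $C_4$ kernel; one also needs to check carefully that the rotational WLOG reduction remains valid across all sign patterns of $(a,b)$, including the degenerate cases $a=0$ or $b=0$.
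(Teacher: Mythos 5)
Your endgame is a genuinely different route from the paper's (which, using the no-$3K_1$/no-$C_5^+$ hypotheses, finds two partition classes $A',B'$ of $\SUPP(\valpha')$ meeting $D\setminus B$ and $C\setminus A$ and exhibits a six-vertex induced subgraph of $\SUPP(\valpha)$ with three positive eigenvalues, contradicting \propref{prop:3_eigenvalues}), and the parts you flagged as the main obstacle are actually fine: the four $\beta'$-identities check out, and $C\subseteq A$ does follow from \propref{prop:supp_inter} under your normalization. But two steps are genuinely unjustified.

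First, ``only one coordinate becomes zero'' is not what \propref{prop:3_eigenvalues} guarantees: $\eps$ is chosen as the smallest threshold, and a tie can zero one coordinate from $\{A,C\}$ and one from $\{B,D\}$ simultaneously; your rotational WLOG only permutes which single vertex dies and does not cover this case. Everything downstream --- ruling out $\mc{P}$ because $B,D\in\SUPP(\valpha')$, identifying $\{B,D\}$ as an intersecting pair of the $\mc{Q}$-structure, and the identity $\SUPP(\valpha)=\SUPP(\valpha')\cup\{A\}$ on which your $C\subseteq A$ deduction rests --- assumes $B,C,D$ all survive in $\valpha'$, so the tie case needs a separate argument (the paper's proof is insensitive to which of the four vertices survive, since it works only with the partition $\mc{R}$ inside $\SUPP(\valpha')$ and with induced subgraphs of $\SUPP(\valpha)$). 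Second, the stationarity $\log|X|=\mu+\lambda\beta_X$ over the whole support of an arbitrary solution of $\OPT(s)$ is asserted, not proved: the paper's \propref{prop:lin_dep} gives only linear dependence of $\mathbf{1},\mathbf{g},\vbeta$, and KKT requires a constraint qualification that genuinely fails for some solutions --- e.g.\ for the $s$-balanced vector with $s\nmid q$, $\vbeta$ is constant on the support while the $\log|A_i|$ are not, so no such $\mu,\lambda$ exist at all. In your configuration stationarity can be rescued, precisely because $\beta_D-\beta_B=\alpha_F>0$ (and likewise for $\valpha'$) makes $\mathbf{1}$ and $\vbeta$ independent on the support and restores LICQ, but that verification must be part of the proof rather than assumed. (Minor: a support graph in $\mc{Q}$ has two intersecting pairs, since $A_1$ and $A_2$ each meet $A_1\cup A_2$, not a unique one; your use of the pair $\{B,D\}$ is unaffected.)
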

\begin{proof}
Suppose, towards contradiction, that $\SUPP(\valpha')\in \mc{P}\cup
\mc{Q}$.  Let $A$, $B$, $C$ and $D$ be the vertices of $H$, where we
assume the pairs $\{A, B\}$, $\{B,C\}$, $\{C,D\}$ and $\{D,A\}$ induce
edges in $\SUPP(\valpha)$, or equivalently, these are the pairs of disjoint
sets. Let $\vgamma\in\R^4$ be the vector that the proof of
\propref{prop:3_eigenvalues} yields for $H$. By \obsref{obs:obs_kernel},
we have $M\cdot \vgamma = 0$, where
\[
  M= \begin{pmatrix}
    0 & 1 & 0 & 1 \\
    1 & 0 & 1 & 0 \\
    0 & 1 & 0 & 1 \\
    1 & 0 & 1 & 0
  \end{pmatrix}.
\]
is the adjacency matrix of $H$, so $\gamma_A+\gamma_C=\gamma_B+\gamma_D=0$.
As in the proof of \propref{prop:cant_walk_3K1}, denote by $\mc{R}$ the
partition of $[q]$ in $\SUPP(\valpha')$ with maximum number of sets.
We remark that among the vertices of $V(H)$, at least one but at most two
belong to $\mc{R}$. This is because the elements of $\mc{R}$ induce
a clique in $\SUPP(\valpha')$. We may then assume, without loss of
generality, that $A\in\mc{R}$ and $C,D\not\in \mc{R}$.
We claim that $B\in \mc{R}$. If not, then because $B\cap D
\ne\emptyset$, there exists a set $B'\in\mc{R}$ such that
$B' \cap (B\cap D)\ne\emptyset$. But if that is the case, the sets
$B', B, D$ would induce a copy of $3K_1$ in $\SUPP(\valpha)$, which is
clearly a contradiction. Thus, we must have $B\in \mc{R}$.

We point out that there is no set $E\in \mc{R}$ such that
$E\cap C\ne\emptyset$ and $E\cap D\ne \emptyset$, as, otherwise,
the sets $A,B,E,C,D$ would induce a subgraph of $\SUPP(\valpha)$
isomorphic to $C_5^+$, which is forbidden.

We now claim that $C\setminus A$ and $D \setminus B$ are nonempty.
Assume, for contradiction, that $C \subset A$. By
\propref{prop:supp_inter}, since $A$ is not a subset of $C$, there
exists $X$ in the support of $\valpha$ which intersects $A$ but is
disjoint from $C$. Since $X\notin \{A,B,C,D\}$, $X$ must belong to
$\SUPP(\valpha')$ as well.
But $X$ intersects $A$, and $A\in \mc{R}$,
hence we conclude that $\SUPP(\valpha')\in \mc{Q}$ and $A\subseteq X$.
Since $X$ is disjoint from $C$, we see $A$ is also disjoint from $C$,
which is a contradiction to the definition of $H$.
Therefore $C$ is not a proper subset of $A$, and similarly,
$D$ is not a proper subset of $B$.

The previous proved facts imply that there exist two sets
$A',B'\in \mc{R}\setminus\{A,B\}$ such that $A,A',B,B'$ are disjoint, $A'\cap C =
B' \cap D = \emptyset$, while $A'\cap D$ and $B'\cap C$ are both nonempty.
For instance, take $A'\in \mc{R}$ which intersects $D\setminus B$
and $B'\in \mc{R}$ which intersects $C\setminus A$. The adjacency matrix
of the subgraph of $\SUPP(\valpha)$ induced by $A,B',A',B, C, D$ (in that order)
is given by
\[
  M'= \begin{pmatrix}
    0 & 1 & 1 & 1 & 0 & 1 \\
    1 & 0 & 1 & 1 & 0 & 1 \\
    1 & 1 & 0 & 1 & 1 & 0 \\
    1 & 1 & 1 & 0 & 1 & 0 \\
    0 & 0 & 1 & 1 & 0 & 1 \\
    1 & 1 & 0 & 0 & 1 & 0
  \end{pmatrix}.
\]
But $M'$ has three positive eigenvalues, which is forbidden
by \propref{prop:3_eigenvalues}. This final contradiction
establishes the proposition.
\end{proof}

Propositions~\ref{prop:cant_walk_3K1},~\ref{prop:cant_walk_C5plus},
and~\ref{prop:cant_walk_C4} together with Steps 1 through 6
imply \thmref{thm:structure}.
\begin{proof}[Proof of \thmref{thm:structure}]
Suppose towards contradiction that there exists a solution
$\vbeta$ to $\OPT(s)$ for which the support $\SUPP(\vbeta)$
belongs to neither $\mc{P}$ nor $\mc{Q}_{\cei{s}}$.
From the proof of Step 1, by repeatedly applying \propref{prop:3_eigenvalues},
we can find a solution $\valpha'$ such that
$\SUPP(\valpha')$ has no induced $3K_1$ and $C_4$ and $\vbeta$
and $\valpha'$ are connected by a piecewise linear path in
$\FEAS(s)$. From Step 6, we have $\SUPP(\valpha')\in \mc{P}
\cup \mc{Q}_{\cei{s}}$. Let $\valpha$ be the last node in the piecewise linear
path from $\vbeta$ to $\valpha'$ before reaching the endpoint $\valpha'$.
Clearly $\valpha'$ was obtained from $\valpha$ by an application of
\propref{prop:3_eigenvalues} with $H$ being isomorphic to
one of $3K_1$, $C_5^+$ or $C_4$. As remarked in the proof of
\propref{prop:step_1}, if $H$ is isomorphic to $C_4$, we may
further assume that $\SUPP(\valpha)$ has no induced copy of $3K_1$
or $C_5^+$. From Propositions~\ref{prop:cant_walk_3K1},%
~\ref{prop:cant_walk_C5plus}~and~\ref{prop:cant_walk_C4}, we conclude
that $\SUPP(\valpha')\not\in \mc{P}\cup \mc{Q}$, a contradiction.

If $\SUPP(\vbeta)\in \mc{P}$, since the sizes of all sets in the support
of $\vbeta$ are at least one and add up to $q$, it is clear that
$\SUPP(\vbeta)\in \cup_{\cei{s}\le k\le q}\mc{P}_k$. In the case that
$\cei{s}<q$, we have $\SUPP(\vbeta)\notin \mc{P}_q$, as otherwise
$\OBJ(\vbeta)=0$ which can not be the optimal objective value.
This establishes \thmref{thm:structure}.
\end{proof}

\section{First applications from the structure}
\label{sec:applications}
In this section, we provide short proofs to two results with the aid
of \thmref{thm:structure}. We first consider the structure of extremal
graphs with $n$ vertices and $m\le n^2/4$ edges which maximizes the
number of $3$-colorings. Such extremal graphs were conjectured to be
close to complete bipartite graphs plus some isolated vertices by
Lazebnik~\cite{Laz89}, and this was later confirmed in \cite{LPS}. Here we
present an asymptotic version by a rather simple proof.

\begin{theorem}

  For any $\eps>0$, the following holds for all sufficiently large $n$.
  Let $G$ be an $n$-vertex with $m\le n^2/4$ edges which maximizes the
  number of $3$-colorings. Then there exists an $n$-vertex graph $G_0$,
  which is a complete bipartite graph plus some isolated vertices, such
  that $G$ is $\eps n^2$-close to $G_0$.

\end{theorem}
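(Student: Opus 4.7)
The plan is to apply the stability corollary and the structural theorem of this paper, specialized to $q=3$ and the edge density range $m\le n^2/4$.

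If $m=o(n^2)$ then $G$ already has $o(n^2)$ edges and is trivially $\eps n^2$-close to the empty graph on $n$ vertices, which is the degenerate complete bipartite graph $K_{0,n}$. So I may assume $m=\Theta(n^2)$ and write $m=\frac{s^*-1}{2s^*}n^2$ for some real $s^*\in(1,2]$ (the upper bound coming from $m\le n^2/4$).

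I would then invoke \thmref{thm:stability-LPS} with the choice $s=2$: for any $\eps'>0$ and sufficiently large $n$, the extremal $G$ is $\eps' n^2$-close to $G_\valpha(n)$ for some $\valpha$ solving $\OPT(s')$ with $s'\le 2$ and $\lt|\frac{s'-1}{2s'}-\frac{m}{n^2}\rt|<\eps'$. Taking $\eps'$ small compared to $s^*-1$, I can ensure $s'>1$, so $\cei{s'}=2$. Now \thmref{thm:structure} applied to this $\valpha$ yields $\SUPP(\valpha)\in\mc{P}_2\cup\mc{P}_3\cup\mc{Q}_2$, and since $\cei{s'}=2<q=3$ the class $\mc{P}_3=\mc{P}_q$ is explicitly excluded. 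Hence $\SUPP(\valpha)\in\mc{P}_2\cup\mc{Q}_2$.

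Finally, I need only check that in each of these two cases $G_\valpha(n)$ is already a complete bipartite graph plus isolated vertices. If $\SUPP(\valpha)\in\mc{P}_2$, the support is a $2$-partition $\{A_1,A_2\}$ of $[3]$, and $G_\valpha(n)$ is by construction the complete bipartite graph between $V_{A_1}$ and $V_{A_2}$. If $\SUPP(\valpha)\in\mc{Q}_2$, the support is $\{A_1,A_2,A_1\cup A_2\}=\{A_1,A_2,[3]\}$ where $\{A_1,A_2\}$ partitions $[3]$; since $[3]$ meets both $A_1$ and $A_2$, the cluster $V_{[3]}$ is joined to no other cluster, and $G_\valpha(n)$ becomes the complete bipartite graph between $V_{A_1}$ and $V_{A_2}$ together with $|V_{[3]}|$ isolated vertices. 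Taking $\eps'\le\eps$ then yields the desired $G_0$. The only real issue is careful case analysis together with handling the degenerate small-$m$ regime; the structural theorem subsumes all of the hard work, and no substantive new obstacle arises.
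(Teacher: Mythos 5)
Your proposal is correct and takes essentially the same route as the paper: a stability statement (you invoke \thmref{thm:stability-LPS} with $s=2$ plus a trivial low-density case, while the paper invokes \corref{cor:stability-LPS} with $s=\frac{n^2}{n^2-2m}$) reduces the problem to a solution $\valpha$ of $\OPTq{3}(s')$ with $\cei{s'}=2$, after which \thmref{thm:structure} forces $\textsc{SUPP}_3(\valpha)\in\mc{P}_2\cup\mc{Q}_2$, and the corresponding graphs $G_\valpha(n)$ are exactly complete bipartite graphs plus isolated vertices. One small repair: since $s^*$ depends on $m$ you cannot choose $\eps'$ ``small compared to $s^*-1$''; instead split at $m\le\eps' n^2$ versus $m>\eps' n^2$, where in the latter case the bound $\lt|\frac{s'-1}{2s'}-\frac{m}{n^2}\rt|<\eps'$ already yields $s'>1$.
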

\begin{proof}
Apply \corref{cor:stability-LPS} to $G$ and $s=\frac{n^2}{n^2-2m}$. Then $G$
is $\eps n^2$-close to some $G_\valpha(n)$, where $\valpha$ solves
$\textsc{OPT}_3(s)$. By \thmref{thm:structure} and $\cei{s}=2$, we get
$\textsc{SUPP}_3(\valpha)\in \mc{Q}_2\cup \mc{P}_2$.
This implies that $G_\valpha(n)$ is a complete bipartite graph plus
some isolated vertices, finishing the proof.
\end{proof}

Next, we prove \thmref{thm:q=s+C} (i). As mentioned in \secref{sec:intro},
the case $q=s+1$ was first proved in \cite{LPS} and then extended to all
$n$ in \cite{LazTofts}. We need the following convenient definition.
When $s\ge 1$ is integer, we define the \emph{$s$-balanced vector for $q$}
to be a vector $\valpha$ such that $\alpha_{A_i}=\frac{1}{s}$ and $A_1,
\ldots,A_s$ forms a balanced $s$-partition of $[q]$.

\begin{proof}[Proof of \thmref{thm:q=s+C} (i)]
Let us only consider when $s+1\le q\le s+2$.
In view of \corref{cor:stability-LPS}, it suffices to show that for every
$s\ge 2$, the $s$-balanced vector $\valpha$ for $q$ is the unique solution
to $\OPT(s)$. By \thmref{thm:structure} and \propref{prop:step_5}, we see
this indeed is the case for $q=s+1$.

Now assume $q=s+2$. It is easy to compute that $\OBJ(\valpha)=
\frac{2}{s}\log 2$. Suppose that there exists a solution $\vbeta$ to
$\OPT(s)$, which is not $\valpha$. By \thmref{thm:structure} and
\propref{prop:step_5}, $\SUPP(\vbeta)\in \mc{P}_{s+1}$. Then we may
assume that the sets $A_1,A_2,\ldots, A_{s}$ in the support of $\vbeta$
are of size 1, except $A_{s+1}$, which is of size $2$. Using
\lemref{lem:fixed_Pk}, we compute that $\lambda'=s\log 2$ and
$\mu'=-\frac{s-1}{s+1}\log 2$, which implies that $\OBJ(\vbeta)=
\frac{2}{s+1}\log 2< \OBJ(\valpha)$, a contradiction. This completes
the proof.
\end{proof}

\section{The counterexamples to Lazebnik's conjecture}
\label{sec:counterexample}
In this section, we show various counterexamples to \conjref{conj:Lazebnik}.
First, we prove \thmref{thm:q=s+C} (ii).

\begin{proof}[Proof of \thmref{thm:q=s+C} (ii)]
We first show that the $10$-balanced vector $\valpha$ for $q=13$
has a smaller objective value than a vector $\vbeta \in \textsc{FEAS}_{13}(10)$
with $11$ color classes, which together with \thmref{thm:pob}
implies that \conjref{conj:Lazebnik} is false for $(s,q)=(10,13)$.
It is easy to compute that $\textsc{OBJ}_{13}(\valpha)=\frac{3}{10}\log 2$.
Let $\vbeta:=(\beta_1,\beta_2,\ldots,\beta_{11})$, where
$\beta_1=\beta_2=\frac{1}{11}+\frac{3\sqrt{5}}{110}$
and $\beta_3=\ldots=\beta_{11}=\frac{1}{11}-\frac{\sqrt{5}}{165}$,
and define the sizes of the color classes by $A_1=A_2=2$ and
$A_3=\ldots=A_{11}=1$. So indeed we have $\vbeta \in
\textsc{FEAS}_{13}(10)$. Now it is easy to verify that
$\textsc{OBJ}_{13}(\vbeta)=(\beta_1+\beta_2)\log 2=
(\frac{2}{11}+\frac{3\sqrt{5}}{55})\log 2>\frac{3}{10}\log 2=
\textsc{OBJ}_{13}(\valpha)$.

We claim that \conjref{conj:Lazebnik} is false for the pair $(s,q)$,
provided the $s$-balanced vector $\vgamma$ for $q$ contains at least
three color classes of size 2 and seven color classes of size 1.
In this case, $\vgamma$ contains a subvector $\frac{10}{s} \valpha$
formed by these $10$ color classes.
Define a new vector $\vgamma'$ obtained from $\vgamma$ by replacing
$\frac{10}{s} \valpha$ with $\frac{10}{s}\vbeta$. It is easy to see
that $\V(\vgamma')=\V(\vgamma)=1$ and $\E(\vgamma')=\E(\vgamma)=
\frac{s-1}{2s}$, so $\vgamma'\in \FEAS(s)$. Since the objective value
of $\vbeta$ is bigger than $\valpha$, it holds that
$\OBJ(\vgamma')>\OBJ(\vgamma)$, proving the claim.

From the claim we obtain that \conjref{conj:Lazebnik} is false for all $(s,q)$,
provided $s+3\le q\le 2s-7$ and $s\ge 10$, finishing the proof.
\end{proof}

The next result gives us more counterexamples in a wider range,
which gives rise to the proof of \thmref{thm:counterexample}.

\begin{theorem}

  \label{thm:counterex-str}
  If $s,t,r$ are integers such that $t\ge 2$ and $50 t \log t \le r
  \le \min\{\frac{s}{2}, \frac{3}{2}t^2 \log^2 t\}$,
  then Lazebnik's conjecture is false for $(s,q)$, where $q:= st + r$.

\end{theorem}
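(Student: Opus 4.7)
The plan is to exhibit, for each admissible triple $(s,t,r)$, a vector $\vbeta\in\FEAS(s)$ (with $q=st+r$) whose objective $\OBJ(\vbeta)$ strictly exceeds $\OBJ(\valpha)$, where $\valpha$ is the $s$-balanced vector for $q$. Since $G_\valpha(n)$ coincides (up to $O_q(n)$ edges) with the Tur\'an graph $T_s(n)$, \thmref{thm:pob} then produces, for each sufficiently large $n$, an $n$-vertex graph with the same edge count as $T_s(n)$ but strictly more proper $q$-colorings (the $O_q(n)$-edge adjustment costs only $e^{O_q(n)}$ in colorings and is absorbed by the positive exponential gain for $n$ large). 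This refutes \conjref{conj:Lazebnik} at $(s,q)$.

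The construction: start with the $s$-balanced partition of $[q]$, i.e.\ $r$ parts of size $t+1$ and $s-r$ parts of size $t$. Split one of the $(t+1)$-parts into a part of size $t$ and a singleton, producing an $(s+1)$-partition $\mc{R}$ with $r-1$ parts of size $t+1$, $s-r+1$ parts of size $t$, and one singleton. Let $\vbeta$ be the weight vector on support $\mc{R}$ given by \lemref{lem:fixed_Pk} with $k=s+1$; its closed-form value is
\[
  \OBJ(\vbeta)\;=\;S_1'+\sqrt{\tfrac{1}{s}\bigl(S_2'-(S_1')^2\bigr)},
\]
where $S_1'$ and $S_2'$ are the mean of $\log|A|$ and of $\log^2|A|$ over $A\in\mc{R}$. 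The inequality $s\cdot\text{Var}(\log|A|)>(S_1')^2$, a direct consequence of $r\le s/2$, ensures all coordinates of $\vbeta$ are strictly positive, giving $\vbeta\in\FEAS(s)$ (with $\E(\vbeta)=\frac{s-1}{2s}$ automatically).

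Writing $L=\log t$ and $M=\log(1+1/t)$ and substituting the explicit part sizes, the target inequality $\OBJ(\vbeta)>\OBJ(\valpha)=L+rM/s$ rearranges, after squaring and clearing denominators, to
\[
  M\,\bigl[\,s(r-1)(s-r+2)-(r+s)^2\,\bigr]\;>\;2\,L\,s\,(s+1).
\]
Using $M\ge 1/(t+1)$ (from $\log(1+x)\ge x/(1+x)$) together with $r\le s/2$ (so $s-r+2\ge s/2$ and $(r+s)^2\le 9s^2/4$), the left side is at least $\tfrac{1}{t+1}\bigl(\tfrac{s^2(r-1)}{2}-\tfrac{9s^2}{4}\bigr)$, while the right side is at most $4s^2\log t$ (using $s+1\le 2s$). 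A routine simplification reduces the target to $r>8(t+1)\log t+\tfrac{11}{2}$, comfortably supplied by the hypothesis $r\ge 50\,t\log t$ for every $t\ge 2$.

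The main obstacle is this algebraic comparison itself: since the leading-order parts of $\OBJ(\vbeta)$ and $\OBJ(\valpha)$ agree (both equal $L$ plus terms of order $M/s$), the gain is a genuine second-order effect in the small quantity $M=\log(1+1/t)$, and the positivity must be extracted from an exact manipulation rather than an asymptotic Taylor expansion. The upper bound $r\le\tfrac{3}{2}t^2\log^2 t$ plays no essential role in the positivity argument above; it merely delimits the range of $q$ being claimed and keeps auxiliary quantities (notably the ratio $(r-1)(s-r+2)M^2/L^2$ appearing inside the square root, which governs the magnitude of the variance term relative to the mean) safely within a regime where feasibility, positivity, and the graph-theoretic conversion via \thmref{thm:pob} all go through uniformly.
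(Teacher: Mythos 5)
You use exactly the paper's construction: the $(s{+}1)$-part support with $r-1$ parts of size $t+1$, $s-r+1$ parts of size $t$ and one singleton, weighted by the closed-form vector of \lemref{lem:fixed_Pk}, whose objective value is $S_1+\sqrt{(S_2-S_1^2)/s}$ in the paper's notation; so the only real divergence is in how the comparison with the balanced value $X=\frac{r}{s}\log(t+1)+\frac{s-r}{s}\log t$ is closed. The paper lower-bounds the square root via $\sqrt{N^2+M}\ge N+\frac{M}{3N}$ (its $M,N$ built from the $s$ non-singleton parts), which is valid only under $M\le 3N^2$, and that is precisely where the hypothesis $r\le\frac{3}{2}t^2\log^2 t$ is consumed. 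You instead square the inequality exactly; your reduced criterion $\log\bigl(1+\tfrac1t\bigr)\,\bigl[s(r-1)(s-r+2)-(r+s)^2\bigr]>2s(s+1)\log t$ is indeed equivalent to $\OBJ(\vbeta)>X$ (I checked the algebra), and your estimates from $r\le s/2$ and $\log(1+\tfrac1t)\ge\tfrac1{t+1}$ correctly reduce it to $r>8(t+1)\log t+\tfrac{11}{2}$, which $r\ge 50t\log t$ supplies for all $t\ge2$. So your route is more elementary at this point and in fact slightly stronger: the cap $r\le\frac{3}{2}t^2\log^2 t$ is never used, whereas the paper's argument genuinely needs it; what the paper's square-root bound buys is a shorter computation. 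Two blemishes, neither fatal: (1) strict positivity of the coordinates is equivalent to $s\bigl(S_2'-(S_1')^2\bigr)>(S_1')^2$ (your primed quantities, over all $s+1$ parts), and this holds automatically --- the difference equals $\frac{s^2}{s+1}$ times the variance of $\log|A|$ over the $s$ non-singleton parts, positive because both sizes $t$ and $t+1$ occur --- so it should be verified as such rather than attributed to $r\le s/2$; (2) the parenthetical that an $O_q(n)$-edge adjustment costs only $e^{O_q(n)}$ and is ``absorbed'' is not literally sound, since the gain rate $\OBJ(\vbeta)-X$ can be far smaller than the implied constant; the conversion to an actual counterexample graph should be routed, as the paper does, through \thmref{thm:pob} (one has $\OPT(s)\ge\OBJ(\vbeta)>X$ while $T_s(n)$ has only $e^{(X+o(1))n}$ colorings), a step the paper itself invokes without further detail.
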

\begin{proof}
It is not hard to see that the objective value of the $s$-balanced vector for $q$
(corresponding to the Tur\'an graph $T_s(n)$, which is conjectured by Lazebnik to be optimal) is
\[
  X:= \frac{r}{s} \log (t+1) + \frac{s-r}{s} \log t.
\]
We will now construct a solution $\valpha$ with $s+1$ parts which yields a
larger objective value. We begin by defining the sequence of the sizes of
the color classes: let $A_1=\ldots =A_{r-1}=t+1$, $A_r=\ldots=A_s=t$ and
$A_{s+1}=1$. And let $\valpha=(\alpha_1,\ldots,\alpha_{s+1})$ such that
\[
  \alpha_i := \frac{\log A_i - S_1}{(s+1)\sqrt{s \cdot (S_2 - S_1^2)}}
  + \frac{1}{s+1},
\]
where $S_1 := \frac{1}{s+1} \sum_{i=1}^{s+1} \log A_i$ and
$S_2 := \frac{1}{s+1} \sum_{i=1}^{s+1} \log^2 A_i$. Observe that
\[
  S_2 - S_1^2 =
  \frac{s}{s+1} \lt[ S_2' - (S_1')^2\rt] + \frac{s}{(s+1)^2} (S_1')^2 > 0,
\]
where $S_1':= \frac{1}{s} \sum_{i=1}^{s} \log A_i$ and
$S_2' := \frac{1}{s} \sum_{i=1}^{s} \log^2 A_i$, thus
the $\alpha_i$'s are well-defined real numbers. Moreover
\[
  S_2' - (S_1')^2 = \frac{(r-1)(s-r+1)}{s^2}\log^2\lt(1+\frac{1}{t}\rt).
\]
We claim that $\alpha_i \ge 0$. This is because $\log A_i\ge 0$ and
$\sqrt{s \cdot (S_2 - S_1^2)} \ge \sqrt{\frac{s^2}{(s+1)^2} (S_1')^2} = S_1.$
One can check that $\sum_{i=1}^s \alpha_i = 1$ and
$\sum_{i=1}^{s+1}\alpha_i^2 = \frac{1}{s}$, hence $(\valpha, \vA)$ is a feasible
solution, having objective value
\[
 Y:= \sum_{i=1}^{s+1} \alpha_i \log A_i = S_1 + \sqrt{\frac{S_2-S_1^2}{s}}.
\]
To finish the proof of the theorem, we just need to show that $Y > X$.
Observe that $X = S_1' + \frac{\log\lt(1+\frac{1}{t}\rt)}{s}$.
Let $M:=\frac{S_2' - (S_1')^2}{s+1}$ and $N:=\frac{S_1'}{s+1}$. We have
$M \le 3N^2$, because
\[
 M = \frac{(r-1)(s-r+1)}{s^2(s+1)}\log^2\lt(1+\frac{1}{t}\rt)
   \le \frac{r}{s^2 t^2} \le \frac{3 \log^2 t}{2s^2} \le
   \frac{3(S_1')^2}{(s+1)^2} = 3N^2,
\]
where we used $0 < \log\lt(1+\frac{1}{t}\rt) < \frac{1}{t}$ and
$r \le \frac{3}{2}t^2 \log^2 t$. From $M \le 3N^2$ we obtain
$\sqrt{N^2 + M} \ge N + \frac{M}{3N}$, thus
\[
 Y= S_1 + \sqrt{N^2 + M} \ge S_1 + N + \frac{M}{3N} =
 S_1' + \frac{M}{3N}.
\]
The only step left is to verify that $\frac{M}{3N} >
\frac{\log\lt(1 + \frac{1}{t}\rt)}{s}$, which can be done as follows
\begin{align*}
  \frac{M}{3N}=\frac{S_2'-(S_1')^2}{3 S_1'} >
  \frac{(r-1)(s-r+1)}{6 s^2 \log t}\log^2\lt(1+\frac{1}{t}\rt)
 \ge \frac{r}{50s\log t}\cdot \frac{\log\lt(1 + \frac{1}{t}\rt)}{t}
   \ge\frac{\log\lt(1 + \frac{1}{t}\rt)}{s},
\end{align*}
where we used $S_1' \le \log(t+1) < 2 \log t$, $s-r+1 \ge \frac{s}{2}$,
 $\log\lt(1+\frac{1}{t}\rt) \ge \frac{1}{2t}$, and $r\ge 50t \log t$.
\end{proof}

\begin{proof}[Proof of \thmref{thm:counterexample}.]
We point out that for any integer $t$ satisfying $20 \le t \le
\frac{s}{200\log s}$, there always exists such an integer $r$ which satisfies
$50 t \log t \le r \le \min\{\frac{s}{2}, \frac{3}{2}t^2 \log^2t\}$.
Therefore (provided $s\ge 50000$) for any $20s\le q_0 \le
\frac{s^2}{200\log s}$ there exists an integer $q$ within distance at
most $s$ from $q_0$ such that Lazebnik's conjecture is false for $(s,q)$.
\end{proof}

\section{Solving $\OPT(s)$ for integer $s$}
\label{sec:opt_integer_s}

When $s\ge 1$ is integer, recall the definition of the $s$-balanced vector
for $q$ in Section \ref{sec:applications}. Because $x\mapsto \log x$ is concave, we know that, among all
candidate solutions with support graph in $\mc{P}_s$, the $s$-balanced
vector has the largest objective value. In this section we shall show
that this vector is indeed the unique solution to $\OPT(s)$ for an
integer $s$, provided that $q=\Omega\lt(\frac{s^2}{\log s}\rt)$.

\begin{theorem}

  \label{thm:Turan_OPT}
  For large enough integer $s$ and for $q\ge 100\frac{s^2}{\log s}$
  the $s$-balanced vector for $q$ is the unique solution to $\OPT(s)$.

\end{theorem}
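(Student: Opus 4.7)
The plan is to combine Theorem~\ref{thm:structure} (applied with integer $s$) and Proposition~\ref{prop:step_5} (which rules out $\mc{Q}_{\cei{s}}$ whenever $s\in\mathbb{Z}$) to conclude that every solution $\valpha$ to $\OPT(s)$ must satisfy $\SUPP(\valpha)\in\mc{P}_k$ for some integer $k$ with $s\le k\le q-1$. Thus the theorem reduces to showing that, uniquely among all such $k$, the value $k=s$ achieves the maximum of $\OBJ$, and that this maximum is attained exactly by the $s$-balanced vector.

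For the case $k=s$, the remark following Lemma~\ref{lem:fixed_Pk} forces $\valpha$ to be uniform ($\alpha_{A_i}=1/s$), so $\OBJ(\valpha)=\tfrac{1}{s}\sum\log|A_i|$. By strict concavity of $\log$ combined with the integrality of the part sizes, this is uniquely maximized by the balanced $s$-partition (parts of size $\lfloor q/s\rfloor$ or $\lceil q/s\rceil$), yielding a value $L_s(q)$ essentially equal to $\log(q/s)$.

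For each $k>s$, Lemma~\ref{lem:fixed_Pk} expresses the maximum of $\OBJ$ over feasible $\valpha$ supported on a fixed $k$-partition $\vA=(A_1,\ldots,A_k)$ as
\[
f(k,\vA)=S_1'+\sqrt{\tfrac{(k-s)(S_2'-(S_1')^2)}{s}},
\]
where $S_1'=\tfrac{1}{k}\sum\log|A_i|$ and $S_2'=\tfrac{1}{k}\sum\log^2|A_i|$. The plan is to work in the continuous relaxation $|A_i|>0$ real with $\sum|A_i|=q$, and apply KKT to conclude that any maximizer of $f$ has at most two distinct part-sizes, say $m$ copies of $a<b$. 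A direct calculation then shows that the validity condition $\alpha_i>0$ (equivalently $\log|A_i|>\mu'$) reduces to $m>k-s$, i.e., fewer than $s$ parts have the larger size. Writing $p=(k-m)/k\in(0,s/k)$, one obtains
\[
f(k,\vA)=(1-h(p))\log a+h(p)\log b,\qquad h(p):=p+\sqrt{\tfrac{(k-s)p(1-p)}{s}},
\]
and the validity condition becomes $h(p)<1$, with equality only as $p\to s/k$. In that limiting case, with $a=1$ saturated at the integer floor, $f$ approaches $\log((q-k+s)/s)$, strictly below $\log(q/s)$ for $k>s$. More generally, combining the linear constraint $(1-p)a+pb=q/k$ with the concavity of $\log$ gives
\[
f(k,\vA)\;\le\;\log(q/k)+(h(p)-p)\log(b/a),
\]
so that the remaining task is to verify $(h(p)-p)\log(b/a)<\log(k/s)+J$, where the Jensen slack $J:=\log(q/k)-[(1-p)\log a+p\log b]\ge 0$.

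The main obstacle is the uniform verification of this final inequality over all admissible $(k,p,a,b)$. The hypothesis $q\ge 100\,s^2/\log s$ — equivalently $\log q\ge 2\log s+O(1)$ — provides precisely the slack needed to dominate the two delicate regimes: (i) $p$ near $s/k$, where $h-p$ approaches $(k-s)/k$ and can be large, but where $a$ is pinned at $1$ and $b$ is forced close to $(q-k+s)/s$; and (ii) small $p$ with very large ratio $b/a$, which is controlled by the Jensen slack $J$ that itself grows with $\log(b/a)$. The proof will then be completed by a dichotomy on the size of $p$ (e.g.\ $p\ge \tfrac12\cdot\tfrac{s}{k}$ versus $p<\tfrac12\cdot\tfrac{s}{k}$), with elementary calculus estimates handling each regime and yielding strict inequality $f(k,\vA)<L_s(q)$.
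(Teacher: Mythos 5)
Your reduction to $\mc{P}_k$ via \thmref{thm:structure} and \propref{prop:step_5}, the treatment of $k=s$, and the formula $f(k,\vA)=S_1'+\sqrt{(k-s)(S_2'-(S_1')^2)/s}$ from \lemref{lem:fixed_Pk} (including the computation that positivity of the weights amounts to $m>k-s$, i.e.\ $p<s/k$) are all correct. The gap lies in the two steps you lean on afterwards. First, the claim that ``any maximizer of $f$ has at most two distinct part-sizes'' is not justified in the relaxation you set up: $f(k,\vA)$ equals the maximum of $\OBJ$ over the fixed partition only when the multiplier-defined weights are nonnegative; in general it is merely an upper bound, and over the open domain $A_i>0$ with $\sum A_i=q$ it is unbounded for every $k\ge s+2$ (send one $A_i\to 0$: then $S_2'-(S_1')^2\sim\frac{k-1}{k^2}\log^2 A_i$ dominates $S_1'\sim\frac1k\log A_i$ whenever $(k-s)(k-1)>s$), so a maximizer need not exist. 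The natural repair is to impose $A_i\ge 1$, but then maximizers may sit on the boundary with some parts pinned at size $1$, and your Lagrange argument only equalizes the \emph{unpinned} coordinates: the correct conclusion is ``at most two sizes among the parts larger than $1$, plus possibly parts of size exactly $1$.'' That third value is not a technicality --- one part of size $1$ carrying a tiny positive weight next to $s$ nearly equal large parts is exactly the shape of the counterexamples in \thmref{thm:counterex-str}, i.e.\ the hardest configuration, and it does not appear anywhere in your $(m,a,b)$ parametrization or in your final dichotomy on $p$ (in your two-size frame it is forced to the validity boundary $p=s/k$, where the small part gets weight $0$, which is why it looks harmless).

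Second, the concluding inequality $(h(p)-p)\log(b/a)<\log(k/s)+J$ is precisely where the hypothesis $q\ge 100\,s^2/\log s$ must do its work, and you only assert that ``elementary calculus estimates'' in two regimes will deliver it. Since the statement being proved is \emph{false} for $q$ up to about $s^2/(200\log s)$, the margin in the critical regime ($k=s+1$, one size-$1$ part, large parts of size about $q/s$) is genuinely of order $1/q$: compare the paper's final comparison $S^*\ge S_1+\frac{1}{2q}$ versus $\OBJ(\valpha)\le S_1+\frac{S_2-S_1^2}{2S_1}\le S_1+\frac{1}{10q}$, which requires first proving $\alpha_{s+1}<\frac{1}{50q}$ (\lemref{lem:small_two}, via the $\delta$-floored relaxation and the benchmark \propref{prop:good_approx}), then $A_{s+1}=1$ (\propref{prop:ratio_alpha_A}), then the variance bound and the balancing step (\lemref{lem:objective}, \lemref{lem:objective2}). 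Your cruder bound $f\le\log(q/k)+(h(p)-p)\log(b/a)$ discards the Jensen slack exactly where it must be quantified to this $1/q$ precision, so the decisive estimate is missing rather than merely unverified. A complete argument along your lines would essentially have to reconstruct the paper's steps: rule out $k\ge s+2$ (the paper does this by merging the two smallest parts, \lemref{lem:k_or_kplus1}), and for $k=s+1$ carry out a second-order expansion around the balanced partition with explicit control of the weight on the size-$1$ part.
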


\noindent Note that when $s$ is integer, the optimization problem $\OPT(s)$
corresponds to the problem of maximizing the number of $q$-colorings over the
family of graphs containing the Tur\'{a}n graph $T_s(n)$, i.e., Lazebnik's
\conjref{conj:Lazebnik}. In the light of \corref{cor:stability-LPS}, 
the establishment of \thmref{thm:Turan_OPT} gives rise to \thmref{thm:main}.

To prove \thmref{thm:Turan_OPT} we use the structural information from
\thmref{thm:structure}, that is, the support graph of a solution necessarily
belongs to $\lt(\cup_{\cei{s}\le k\le q}\mc{P}_k\rt)\cup \mc{Q}_{\cei{s}}$.
Additionally, we know by \propref{prop:step_5} that the support graph
is not in $\mc{Q}_{\cei{s}}$ (since $s$ is integer), hence we shall only
study candidate solutions $\valpha$ whose support forms a partition
of $[q]$. So proving \thmref{thm:Turan_OPT} amounts to showing
that the solution of $\OPT(s)$ lies in $\mc{P}_s$. With that in mind,
the optimization problem~\eqref{opt:fixed_Qk} for $\beta=0$ and variable
$A_i$'s can be stated as follows. Fix $\cei{s} \le k \le q$ and
\begin{equation}
  \label{opt:rephrased}
  \begin{array}{ll}
    \text{Maximize}   & \sum_{i=1}^k\alpha_i \log A_i \\
    \text{Subject to} & \sum_{i=1}^k\alpha_i=1,
                        \sum_{i=1}^k\alpha_i^2\le \frac{1}{s},
                        \sum_{i=1}^k A_i = q,\\
                      & \alpha_i\ge 0 \enskip\text{is real},
                        A_i>0 \enskip\text{is integer, for all}
                        \enskip i=1,\ldots, k.
  \end{array}
\end{equation}

It turns out that \eqref{opt:rephrased} becomes much simpler to analyze
when we relax the conditions on the $A_i$'s and allow them to assume
any nonnegative real value. By doing so, we shall obtain very good bounds
for $\OPT(s)$ in \secref{sec:relaxation} for every real $s$
(not necessarily integer). Finally, in \secref{sec:sol_integer} we give
the full proof of \thmref{thm:Turan_OPT}.

\subsection{The continuous relaxation}
\label{sec:relaxation}

Following the ideas of Norine in \cite{Nor}, we shall consider a
continuous relaxation of \eqref{opt:rephrased}.
We relax the constraints on the variables $A_i$ by allowing them
to assume any nonnegative real value. In this version of the problem,
we also scale these variables by dividing each $A_i$ by $q$.
The only constraint involving $q$ in \eqref{opt:rephrased} becomes
$\sum_{i=1}^k A_i = 1$, which is now independent of $q$. The other
effect this variable scaling has is of subtracting the value of
the goal function by the constant $\log q$. In addition, it will
be convenient to introduce another parameter $0 \le \delta < \frac{1}{k}$,
which represents the smallest value that one of the variables $\alpha_i$
can assume. The relaxed problem is stated as follows.
\begin{equation}
  \label{opt:relaxed}
  \begin{array}{ll}
    \text{Maximize}   & \sum_{i=1}^k\alpha_i \log A_i\\
    \text{Subject to} & \sum_{i=1}^k\alpha_i=1, \enskip
                        \sum_{i=1}^k\alpha_i^2\le \frac{1}{s},\enskip
                        \sum_{i=1}^k A_i=1,\\
    \text{Where}      &\alpha_i \ge \delta, A_i\ge 0\Text{are real
                       variables, and} k \ge \cei{s}.
  \end{array}
\end{equation}

In above and in the rest of this subsection we do not assume that
$s$ is necessarily integer, and we do allow some of the variables
$A_i$ to assume the value zero. For this, we extend the range of
the goal function to include $-\infty$. This is a minor technical
detail and is used solely to simplify our analysis of the problem.
In that case, we also extend the definition of the goal function as
we set $A\cdot\log B=-\infty$ for $A\ne B=0$ and $A\cdot\log B=0$
for $A=B=0$.

We stress that the optimization problem \eqref{opt:relaxed} is
well-defined, even though the goal function is discontinuous at
the boundary. This is because the goal function is still upper
semi-continuous, and the domain is compact, so the maximum of
\eqref{opt:relaxed} is always attained by a point in the domain.

\medskip

We will prove a sequence of statements about the relaxation,
which will lead us to a complete solution to \eqref{opt:relaxed}.
First, let us determine the values of the $A_i$'s in terms
of the $\alpha_i$'s.

\begin{proposition}

  \label{prop:fixed_alpha}
  For any solution to \eqref{opt:relaxed}, we
  have $A_i = \alpha_i$ for all $i=1,\ldots, k$.

\end{proposition}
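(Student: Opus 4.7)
The key observation is that the constraints on $\valpha$ (namely $\sum_i \alpha_i = 1$, $\sum_i \alpha_i^2 \le 1/s$, and $\alpha_i \ge \delta$) are \emph{independent} from the constraints on $\vA$ (namely $\sum_i A_i = 1$ and $A_i \ge 0$). So for any solution $(\valpha,\vA)$, the vector $\vA$ must, in particular, maximize the linear-in-$\log \vA$ functional $f(\vA) := \sum_i \alpha_i \log A_i$ subject to the simplex constraint $\sum_i A_i = 1$, $A_i \ge 0$. My plan is to show that this inner maximum is attained uniquely at $\vA = \valpha$, by a direct Gibbs/Jensen computation.

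First I would eliminate the boundary pathologies. If $\alpha_i > 0$ while $A_i = 0$ then $f(\vA) = -\infty$, which cannot occur at a maximum (the feasible vector $\vA = \valpha$ already gives a finite value), so I may assume $A_i > 0$ whenever $\alpha_i > 0$. Next I rewrite, using the convention $0 \log 0 = 0$,
\[
  f(\vA) \;=\; \sum_{i:\alpha_i>0} \alpha_i \log \alpha_i
  \;+\; \sum_{i:\alpha_i>0} \alpha_i \log\!\lt(\frac{A_i}{\alpha_i}\rt).
\]
The first sum depends only on $\valpha$, so only the second must be maximized. Applying Jensen's inequality to the concave function $\log$ with weights $\alpha_i$ (whose sum is $1$) gives
\[
  \sum_{i:\alpha_i>0}\alpha_i\log\!\lt(\frac{A_i}{\alpha_i}\rt)
  \;\le\; \log\!\lt(\sum_{i:\alpha_i>0}\alpha_i\cdot\frac{A_i}{\alpha_i}\rt)
  \;=\; \log\!\lt(\sum_{i:\alpha_i>0} A_i\rt)
  \;\le\; \log 1 \;=\; 0.
\]

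For equality, Jensen's inequality requires the ratios $A_i/\alpha_i$ to be a common constant $c$ across all $i$ with $\alpha_i>0$, and the second inequality requires $\sum_{i:\alpha_i>0}A_i = 1$, i.e.\ no mass is placed on coordinates with $\alpha_i=0$. Combined with $\sum_i\alpha_i = 1$, the constant must be $c=1$, forcing $A_i = \alpha_i$ on the support of $\valpha$ and $A_i = 0 = \alpha_i$ off the support. Hence $\vA = \valpha$ coordinatewise. Since Jensen's inequality is strict unless these equality conditions hold, any solution of \eqref{opt:relaxed} must satisfy $A_i = \alpha_i$ for every $i$. There is no substantive obstacle here beyond keeping track of the $0\cdot\log 0$ and $-\infty$ conventions at the boundary; the content is a one-line application of Jensen/Gibbs once the two constraint sets are decoupled.
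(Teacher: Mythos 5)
Your proof is correct, and it takes a genuinely different route from the paper. The paper first establishes $\alpha_i = 0 \iff A_i = 0$ (the forward direction by the $-\infty$ convention, the reverse by a weight-shifting argument), and then applies the method of Lagrange multipliers in the $A_i$ variables to get $\alpha_i/A_i = \lambda$ on the support, summing both constraints to conclude $\lambda = 1$. You instead decouple the two constraint sets and solve the inner maximization over $\vA$ globally by a Gibbs/Jensen computation: writing $\sum_i \alpha_i \log A_i = \sum_i \alpha_i\log\alpha_i + \sum_i \alpha_i \log(A_i/\alpha_i)$ and bounding the second sum by $0$, with the equality analysis of Jensen forcing $A_i = \alpha_i$ on the support and $A_i = 0$ off it (so the case $\alpha_i = 0$, $A_i > 0$, which the paper excludes by a separate weight-shifting step, is absorbed into the tightness of your second inequality). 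Your approach buys a self-contained global argument that needs no first-order optimality conditions, no regularity qualification, and no case analysis at the boundary beyond the stated conventions, and it simultaneously yields uniqueness of the inner maximizer; the paper's multiplier computation is shorter to state and fits the KKT/Lagrange machinery it reuses in the subsequent lemmas of that section (e.g.\ the proofs of the two-values property and the later estimates), but is somewhat more informal about why the stationarity condition applies at points where some $A_i$ vanish. Either argument fully establishes the proposition.
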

\begin{proof}
First observe that the maximum of \eqref{opt:relaxed} is
at least $-\log k$, since we can take $\alpha_i=A_i =\frac{1}{k}$
for all $i=1,\ldots, k$. Moreover, if $A_i = 0$ then $\alpha_i = 0$
because otherwise the value of the goal function would be $-\infty$.
Furthermore, if $\alpha_i = 0$ then we also have $A_i = 0$ because
we could otherwise ``shift the weight'' of $A_i$ to another $A_j$
such that $\alpha_j > 0$ and increase the value of $F(\valpha)$.
Hence, we have $\alpha_i = 0\iff A_i= 0$.

By the method of Lagrange multipliers applied to the variables
$A_i$, there exists $\lambda$ such that $\frac{\alpha_i}{A_i} = \lambda$,
for all $i$ such that $A_i\ne 0$. But the identity $\alpha_i = \lambda A_i$
is still true even if $A_i = 0$ (by the discussion in the previous
paragraph). So $\sum_{i=1}^k \alpha_i = \lambda \cdot\sum_{i=1}^k A_i$.
Therefore $\lambda = 1$, which implies $A_i = \alpha_i$ for all
$i=1,\ldots,k$ and proves the proposition.
\end{proof}

Notice that the equivalent of \propref{prop:tight} still holds
in this context.
\begin{proposition}

  \label{prop:tight_cont}
  For any solution to \eqref{opt:relaxed}, we have
  $\sum_{i=1}^k \alpha_i^2 =\frac{1}{s}$.

\end{proposition}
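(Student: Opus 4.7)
The plan is to mirror the perturbation argument from Proposition \ref{prop:tight}. By Proposition \ref{prop:fixed_alpha}, any solution has $A_i = \alpha_i$, so the objective reduces to $F(\valpha) := \sum_{i=1}^k \alpha_i \log \alpha_i$, and since $x \mapsto x \log x$ is strictly convex, $F$ is maximized by spreading $\valpha$ out as far as the variance constraint $\sum \alpha_i^2 \le 1/s$ permits. This already suggests that the constraint must be saturated at any optimum.

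Assume for contradiction that some solution $\valpha$ has $\sum \alpha_i^2 < 1/s$. The key perturbation is: given two indices $i \neq j$ with $\alpha_j > \delta$, set $\alpha_i' := \alpha_i + \eps$ and $\alpha_j' := \alpha_j - \eps$, keeping the other coordinates fixed. For $\eps > 0$ small, the simplex condition is preserved, $\alpha_j' \ge \delta$ remains valid, and the change $2\eps(\alpha_i - \alpha_j) + 2\eps^2$ in $\sum (\alpha')^2$ is absorbed by the strict slack $1/s - \sum \alpha^2$. A Taylor expansion yields $F(\valpha') - F(\valpha) = \eps \log(\alpha_i/\alpha_j) + O(\eps^2)$, which is strictly positive whenever $\alpha_i > \alpha_j$; in the case $\alpha_i = \alpha_j$ the leading term vanishes but the second-order term $\eps^2 / \alpha_i$ remains positive. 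Either way this contradicts optimality, provided such a pair $(i, j)$ exists.

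The main obstacle is the remaining configuration in which at most one coordinate exceeds $\delta$. Because $\delta < 1/k$, the mean of the $\alpha_i$'s strictly exceeds $\delta$, which forces $\valpha$ to be the simplex vertex $(1 - (k-1)\delta, \delta, \ldots, \delta)$ up to reindexing. I would handle this by exploiting the freedom to vary $k$: compare $\valpha$ with the $(k-1)$-dimensional vector $\valpha'' = (1 - (k-2)\delta, \delta, \ldots, \delta)$ obtained by absorbing one boundary coordinate into the large one. A direct computation gives $F(\valpha'') - F(\valpha) = g(\alpha_1 + \delta) - g(\alpha_1) - g(\delta) > 0$ by strict convexity of $g(x) := x \log x$. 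When $\valpha''$ is feasible in the $(k-1)$-dimensional problem (in particular, when $k - 1 \ge \cei{s}$ and $\sum (\alpha'')^2 \le 1/s$), this immediately contradicts the optimality of $\valpha$; the remaining subcases are handled by a continuity argument within the $(k-1)$-dimensional feasible set, moving from $\valpha''$ back toward the interior until the sphere $\sum \alpha^2 = 1/s$ is reached, with the corner case $k = \cei{s}$ requiring a separate direct comparison against the $s$-balanced vector.
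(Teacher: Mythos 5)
Your main perturbation is correct and is exactly the argument the paper has in mind (the paper itself only gestures at the analogy with \propref{prop:tight}): after using \propref{prop:fixed_alpha} to put $A_i=\alpha_i$, whenever two coordinates exceed $\delta$ you may move $\eps$ of weight from the smaller to the larger of them, the slack absorbs the $2\eps(\alpha_i-\alpha_j)+2\eps^2$ increase of $\sum\alpha_i^2$, and the strict convexity of $x\mapsto x\log x$ gives a strict gain, so such a solution must saturate the quadratic constraint.

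The gap is in the corner case $\valpha=(1-(k-1)\delta,\delta,\ldots,\delta)$. First, $k$ is a parameter of the instance \eqref{opt:relaxed} (see how \lemref{lem:small_alphak} invokes it ``for $s$, $k$, and $\delta$''), so comparing $\valpha$ with a $(k-1)$-coordinate vector $\valpha''$ says nothing about optimality in the $k$-coordinate problem, and you cannot pad $\valpha''$ with an extra coordinate because of the constraint $\alpha_i\ge\delta>0$. Second, and more seriously, no comparison argument can dispose of this configuration in full generality, because for $\delta$ close to $1/k$ and $k>s$ this vertex genuinely is the global maximizer of \eqref{opt:relaxed} while having $\sum_i\alpha_i^2<1/s$: for instance $s=2.5$, $k=3$, $\delta=0.32$ gives the vertex $(0.36,0.32,0.32)$ with squared norm $0.3344<0.4$, and its objective value $0.36\log 0.36+0.64\log 0.32\approx-1.097$ exceeds $-\log 3$, the value of the uniform vector you propose as the final benchmark (here the quadratic constraint is inactive on the whole domain, and since $x\mapsto x\log x$ is convex the maximum sits at a vertex of the $\delta$-restricted simplex). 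So the statement is really meant for the regime in which it is later applied, namely $\delta$ small enough that the sphere $\sum_i\alpha_i^2=1/s$ meets the restricted simplex (in \lemref{lem:small_alphak}, $\delta=\frac{1}{50q}$ and $k\le q$). In that regime your corner case closes in one line and needs no comparison at all: $\sum_i\alpha_i^2$ is convex, hence maximized over $\{\alpha_i\ge\delta,\ \sum_i\alpha_i=1\}$ at a vertex, i.e.\ at a permutation of your corner point; thus if $\valpha$ is that vertex its squared norm equals $(1-(k-1)\delta)^2+(k-1)\delta^2\ge\frac{1}{s}$, and feasibility $\sum_i\alpha_i^2\le\frac{1}{s}$ forces equality. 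I would replace the change-of-$k$ and continuity arguments in your last paragraph by this observation (stating the smallness assumption on $\delta$ explicitly).
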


In view of the previous two propositions we can restate
\eqref{opt:relaxed} as
\begin{equation}
  \label{opt:relaxed2}
  \begin{array}{ll}
    \text{Maximize}   & F(\valpha):=\sum_{i=1}^k\alpha_i \log \alpha_i\\
    \text{Subject to} & \sum_{i=1}^k\alpha_i=1, \enskip
                        \sum_{i=1}^k\alpha_i^2= \frac{1}{s},\\
    \text{Where}      &k \ge \cei{s}, \Text{and} \alpha_i \ge \delta \Text{is real for all} i=1,\ldots, k.
  \end{array}
\end{equation}
In the succeeding proposition we prove an upper bound for $F(\valpha)$
in \eqref{opt:relaxed2}.

\begin{proposition}

  \label{prop:jensen_cont}
  For any $\valpha$ in the domain of \eqref{opt:relaxed2} we
  have $F(\valpha) \le -\log s$. In particular, the maximum
  of \eqref{opt:rephrased} is at most $\log q - \log s$.

\end{proposition}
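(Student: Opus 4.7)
The plan is to bound $F(\valpha)$ using the tangent line to the concave function $\log$ at the point $x = 1/s$. Concavity gives the pointwise inequality
$$\log x \le -\log s + sx - 1 \quad \text{for all } x > 0,$$
since the right-hand side is the tangent line to $y = \log x$ at $x = 1/s$. Multiplying by $\alpha_i \ge 0$ (with the convention $0\log 0 = 0$) and summing over $i = 1,\ldots,k$,
$$F(\valpha) = \sum_{i=1}^k \alpha_i \log \alpha_i \le -\log s \cdot \sum_{i=1}^k \alpha_i + s\sum_{i=1}^k \alpha_i^2 - \sum_{i=1}^k \alpha_i = -\log s,$$
where the last equality invokes the constraints $\sum \alpha_i = 1$ and $\sum \alpha_i^2 = 1/s$ of \eqref{opt:relaxed2}. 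The same argument actually yields $F(\valpha) \le -\log s$ whenever $\sum \alpha_i^2 \le 1/s$, because in that case the middle term $s\sum \alpha_i^2 - 1$ is nonpositive. Note that this bound does not use the constraints $\alpha_i \ge \delta$ or $k \ge \cei{s}$ at all.

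For the \emph{in particular} assertion about \eqref{opt:rephrased}, I would apply the log-sum (Jensen) inequality to the positive ratios $A_i/\alpha_i$ with weights $\alpha_i$, summing only over indices $i$ with $\alpha_i > 0$ (indices with $\alpha_i = 0$ contribute $0$ to $\sum \alpha_i \log A_i$ since $A_i$ is finite):
$$\sum_{i\colon \alpha_i>0} \alpha_i \log \frac{A_i}{\alpha_i} \le \log\!\left(\sum_{i\colon \alpha_i>0} \alpha_i \cdot \frac{A_i}{\alpha_i}\right) \le \log\!\left(\sum_{i=1}^k A_i\right) = \log q.$$
Rearranging gives $\sum_{i=1}^k \alpha_i \log A_i \le F(\valpha) + \log q$, and combining with the first part (valid here because $\sum \alpha_i^2 \le 1/s$ in \eqref{opt:rephrased}) produces $\sum_{i=1}^k \alpha_i \log A_i \le \log q - \log s$, as required. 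The entire argument rests on recognizing the tangent-line bound and the log-sum inequality, so I do not anticipate any substantive obstacle.
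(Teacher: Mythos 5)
Your proof is correct. For the main inequality $F(\valpha)\le-\log s$ it is essentially the paper's argument: the paper applies Jensen's inequality directly, $\sum_i\alpha_i\log\alpha_i\le\log\bigl(\sum_i\alpha_i^2\bigr)=-\log s$, and your tangent-line bound at $x=1/s$ is just the supporting-line proof of that same Jensen step, with the small bonus that it visibly works under the relaxed constraint $\sum_i\alpha_i^2\le\frac{1}{s}$ rather than only at equality. Where you genuinely diverge is the \emph{in particular} clause: the paper obtains it through the relaxation chain already set up in \secref{sec:relaxation} --- the scaling of the $A_i$'s by $q$ shifts the objective by $\log q$, and \propref{prop:fixed_alpha} (via Lagrange multipliers) shows $A_i=\alpha_i$ at any optimum of \eqref{opt:relaxed}, so the bound on \eqref{opt:rephrased} is an immediate corollary --- whereas you give a self-contained bound directly on \eqref{opt:rephrased} via the log-sum inequality, $\sum_{i:\alpha_i>0}\alpha_i\log\frac{A_i}{\alpha_i}\le\log\bigl(\sum_i A_i\bigr)=\log q$, hence $\sum_i\alpha_i\log A_i\le F(\valpha)+\log q\le\log q-\log s$. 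Your route has the merit of not invoking \propref{prop:fixed_alpha} or the tightness statement at all (and of handling $\alpha_i=0$ explicitly); the paper's route is shorter in context because the relaxation machinery is needed anyway for the later steps of \secref{sec:opt_integer_s}.
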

\begin{proof}
We know that the function $x\mapsto \log x$ is concave for $x>0$,
and by Jensen's inequality we obtain
\[
  F(\valpha)= \sum_{i=1}^k \alpha_i \log \alpha_i \le
  \log\lt(\sum_{i=1}^k \alpha_i^2\rt)= -\log s,
\]
thereby proving the proposition.
\end{proof}

The next proposition reveals further information about the structure
of the solutions of \eqref{opt:relaxed2}.

\begin{proposition}

  \label{prop:two_values}
  If $\valpha = (\alpha_1,\ldots, \alpha_k)$ is a local maximum
  point for \eqref{opt:relaxed2}, then the cardinality of the set
  $\{\alpha_1,\ldots, \alpha_k\}\setminus \{\delta\}$ is at most two.

\end{proposition}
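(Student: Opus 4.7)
The plan is to invoke the KKT machinery from Section \ref{subsec:KKT} at the local maximum $\valpha$. The problem \eqref{opt:relaxed2} has two equality constraints, $g_1(\valpha) := \sum_i \alpha_i - 1 = 0$ and $g_2(\valpha) := \sum_i \alpha_i^2 - 1/s = 0$, with gradients $\mathbf{1}$ and $2\valpha$; the inequality constraints are $\alpha_i \ge \delta$, whose gradients are the standard basis vectors $\mathbf{e}_i$. Before applying KKT, I would dispose of the degenerate case where $\mathbf{1}$ and $\valpha$ are linearly dependent: in that case all $\alpha_i$ coincide, and together with the two equality constraints this forces $k = s$ and $\alpha_i = 1/s$ for every $i$, so $\{\alpha_1,\ldots,\alpha_k\}\setminus\{\delta\}$ has at most one element, which is stronger than what we need. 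Otherwise, $\mathbf{1}$, $\valpha$, and the active basis vectors $\mathbf{e}_i$ are linearly independent (the $\mathbf{e}_i$ span a coordinate subspace and projecting $\mathbf{1}, \valpha$ onto the remaining coordinates yields two linearly independent vectors), so LICQ holds.

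The KKT stationarity condition then gives constants $\lambda_1, \lambda_2 \in \mathbb{R}$ and $\mu_i \ge 0$ with $\mu_i(\alpha_i - \delta) = 0$ such that, for every $i$,
\[
  \log \alpha_i + 1 \;=\; \lambda_1 + 2\lambda_2 \alpha_i - \mu_i.
\]
For each index $i$ with $\alpha_i > \delta$, complementary slackness gives $\mu_i = 0$, so $\alpha_i$ is a positive root of the single-variable equation
\[
  \phi(x) \;:=\; \log x + 1 - \lambda_1 - 2\lambda_2 x \;=\; 0.
\]

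The core of the argument is then a one-line calculus observation: $\phi'(x) = 1/x - 2\lambda_2$ has at most one zero on $(0,\infty)$. If $\lambda_2 \le 0$ then $\phi'$ is strictly positive and $\phi$ is strictly increasing, so it has at most one zero; if $\lambda_2 > 0$ then $\phi$ has a unique critical point at $x = 1/(2\lambda_2)$ and is unimodal there, so by Rolle's theorem it has at most two zeros. In either case, the set $\{\alpha_i : \alpha_i > \delta\}$ consists of at most two distinct real numbers, which gives $|\{\alpha_1,\ldots,\alpha_k\}\setminus\{\delta\}| \le 2$.

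I do not expect serious obstacles here: the heart of the proof is the elementary fact that $\log x$ and a linear function cross at most twice. The only care required is in the bookkeeping around KKT regularity and the degenerate uniform solution, both of which I would dispatch at the start of the argument as outlined above.
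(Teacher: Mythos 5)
Your proposal is correct and follows essentially the same route as the paper: stationarity of the Lagrangian/KKT conditions at indices with $\alpha_i>\delta$ gives $\log\alpha_i+1=\mu+\lambda\alpha_i$, and the elementary fact that a logarithm and a linear function cross at most twice (the paper phrases it via strict concavity of $\log x-\lambda x+1-\mu$, you via unimodality of its derivative) yields at most two distinct values. Your extra care with LICQ and the degenerate all-equal case matches the paper's brief dismissal of the $\alpha_i=\tfrac{1}{k}$ case, so there is nothing substantively different to flag.
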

\begin{proof}
We again use the method of Lagrange multipliers.
If $\alpha_i = \frac{1}{k}$ for $i=1,\ldots, k$, the statement of the
proposition is immediately true. Otherwise, $\valpha$ is a regular
point, and thus there exist two multipliers $\lambda, \mu$ such that
\[
  \log \alpha_i + 1 = \mu + \lambda \alpha_i
\]
for all $i$ such that $\alpha_i > \delta$. But the function
$f(x):= \log x - \lambda x +1 - \mu$ is strictly concave regarless of
the values of $\lambda$ and $\mu$, therefore there are at most two roots
of $f(x) = 0$, which proves the proposition.
\end{proof}

It turns out that the case $k=3$ will play vital role in the way
we solve the general case. We thus derive the following two propositions
for this special case. When $k=3$ we have $\alpha_1 + \alpha_2
+ \alpha_3 = 1$ and $\alpha_1^2 + \alpha_2^2 + \alpha_3^2 = \frac{1}{s}$.
It will be convenient to introduce the following parametrization of
the variables
\begin{equation}
  \label{eqn:reparametrization}
  \begin{array}{ll}
  \alpha_1 &= \frac{1}{3} + \frac{\rho}{\sqrt{6}} \cos \theta
    + \frac{\rho}{\sqrt{2}} \sin \theta\\
  \alpha_2 &= \frac{1}{3} + \frac{\rho}{\sqrt{6}} \cos \theta
    - \frac{\rho}{\sqrt{2}} \sin \theta\\
  \alpha_3 &= \frac{1}{3} - \frac{2\rho}{\sqrt{6}} \cos \theta,
  \end{array}
\end{equation}
which clearly satisfies $\alpha_1 + \alpha_2 + \alpha_3 = 1$
and $\alpha_1^2 + \alpha_2^2 + \alpha_3^2 = \frac{1}{s}$,
where $\rho := \sqrt{\frac{3-s}{3s}}$ and $\theta \in [0,2\pi]$
is a new variable. Moreover, any triple $(\alpha_1,\alpha_2, \alpha_3)$
satisfying the constraints of \eqref{opt:relaxed2} can be parametrized as
before. By symmetry, we may even assume $\alpha_1\ge \alpha_2\ge \alpha_3$,
or equivalently, $\theta \in [0, \frac{\pi}{3}]$. The actual range of $\theta$
is an interval of the form $[\theta_0,\frac{\pi}{3}]$, where $\theta_0
\in[0,\frac{\pi}{3}]$ is either the solution of $\alpha_3(\theta_0) = \delta$
if $\frac{1}{3}-\frac{2\rho}{\sqrt{6}}\le \delta$, or $0$ otherwise.

The parametrization \eqref{eqn:reparametrization} allows us to view
$F(\valpha)=F(\alpha_1,\alpha_2,\alpha_3)$ as a function of $\theta$.
With slight abuse of notation, let $F(\theta):= F(\alpha_1(\theta),
\alpha_2(\theta),\alpha_3(\theta))$.
\begin{proposition}

  \label{prop:three_minimum}
  For any $1 < s \le 3$, $\theta=\frac{\pi}{3}$ is a strict
  local minimum point for $F(\theta)$.

\end{proposition}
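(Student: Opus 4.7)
The plan is to establish the strict local minimum by combining a symmetry argument that forces $F'(\pi/3)=0$ with a direct computation showing $F''(\pi/3)>0$. For $s=3$ the parametrization degenerates ($\rho=0$ gives $\alpha_i\equiv 1/3$ and $F\equiv -\log 3$), so henceforth I assume $1<s<3$ and hence $\rho>0$.

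First, the reparametrization \eqref{eqn:reparametrization} enjoys the symmetry $\theta\mapsto \frac{2\pi}{3}-\theta$, which fixes $\alpha_1$ and interchanges $\alpha_2$ with $\alpha_3$; a direct substitution using $\cos(\frac{2\pi}{3}-\theta)=-\frac{1}{2}\cos\theta+\frac{\sqrt{3}}{2}\sin\theta$ confirms this. Since $F(\alpha_1,\alpha_2,\alpha_3)$ is symmetric in its arguments, it follows that $F(\frac{2\pi}{3}-\theta)=F(\theta)$, so that $F'(\pi/3)=0$ automatically.

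Next, I compute $F''(\pi/3)$. Because $\sum_i\alpha_i'=0$, we have $F'(\theta)=\sum_i\alpha_i'(\theta)\log\alpha_i(\theta)$, and differentiating again,
\[
  F''(\theta)=\sum_{i=1}^{3}\alpha_i''(\theta)\log\alpha_i(\theta)
  +\sum_{i=1}^{3}\frac{(\alpha_i'(\theta))^2}{\alpha_i(\theta)}.
\]
From \eqref{eqn:reparametrization} one reads off $\alpha_i''(\theta)=-\bigl(\alpha_i(\theta)-\tfrac{1}{3}\bigr)$. Setting $u:=\rho/\sqrt{6}$ and evaluating at $\theta=\pi/3$ gives $\alpha_1=\tfrac{1}{3}+2u$, $\alpha_2=\alpha_3=\tfrac{1}{3}-u$, together with $\alpha_1'(\pi/3)=0$ and $\alpha_2'(\pi/3)=-\alpha_3'(\pi/3)=-\rho/\sqrt{2}$. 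Substituting yields
\[
  F''(\pi/3)=2u\log\frac{\alpha_2}{\alpha_1}+\frac{\rho^2}{\alpha_2}.
\]
Setting $t:=3u\in(0,1)$ and dividing by $2t/3>0$ reduces the desired inequality $F''(\pi/3)>0$ to showing
\[
  g(t):=\frac{3t}{1-t}-\log\frac{1+2t}{1-t}>0 \quad\text{for } t\in(0,1).
\]

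Finally, I verify this inequality by elementary calculus. One checks $g(0)=0$ and
\[
  g'(t)=\frac{3}{(1-t)^2}-\frac{1}{1-t}-\frac{2}{1+2t},\qquad
  g''(t)=\frac{5+t}{(1-t)^3}+\frac{4}{(1+2t)^2}.
\]
Thus $g'(0)=3-1-2=0$ and $g''(t)>0$ on $(0,1)$, so $g'$ is strictly increasing with $g'(t)>0$ for $t>0$, and consequently $g(t)>0$ on $(0,1)$, as needed. The only technical care lies in bookkeeping the trigonometric identities at $\theta=\pi/3$ and reducing cleanly to the one-variable convex inequality above; no substantive obstacle is anticipated.
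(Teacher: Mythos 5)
Your proposal is correct and follows essentially the same route as the paper: it checks $F'(\pi/3)=0$ and then shows $F''(\pi/3)>0$, arriving at the same expression $F''(\pi/3)=\frac{\rho^2}{\alpha_2}+\frac{2\rho}{\sqrt{6}}\log\frac{\alpha_2}{\alpha_1}$ with $\alpha_1=\frac13+\frac{2\rho}{\sqrt6}$, $\alpha_2=\alpha_3=\frac13-\frac{\rho}{\sqrt6}$. The only differences are cosmetic: you obtain $F'(\pi/3)=0$ from the symmetry $\theta\mapsto\frac{2\pi}{3}-\theta$ rather than direct substitution, and you verify the final inequality via the convexity argument for $g(t)$ with $t=3\rho/\sqrt6$, where the paper gets it in one line from $\log(1+x)<x$ applied to $x=\frac{3t}{1-t}$; your explicit exclusion of the degenerate case $s=3$ (where $\rho=0$ and $F$ is constant) is likewise implicit in the paper's strict use of that inequality.
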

\begin{proof}
For $1 < s \le 3$, we have $0 \le \rho < \frac{\sqrt{6}}{3}$.
For $\theta=\frac{\pi}{3}$, we clearly have $\alpha_1,\alpha_2,\alpha_3>0$.
Moreover, taking derivatives with respect to $\theta$, we get
\[
  F'(\theta) = \alpha_1' \log \alpha_1 + \alpha_2' \log \alpha_2
    + \alpha_3' \log \alpha_3
\]
and a straightforward computation shows that $F'(\pi/3) = 0$.
Thus, in order to prove that $\frac{\pi}{3}$ is a strict local minimum
point for $F(\theta)$, it is enough to show that the second derivative
of $F(\theta)$ at $\theta=\frac{\pi}{3}$ is positive. Computing
$F''(\theta)$ we obtain
\[
  F''(\theta) = \frac{(\alpha_1')^2}{\alpha_1} +
         \frac{(\alpha_2')^2}{\alpha_2} +
         \frac{(\alpha_3')^2}{\alpha_3} +
         \alpha_1'' \log \alpha_1 +
         \alpha_2'' \log \alpha_2 +
         \alpha_3'' \log \alpha_3
\]
We replace $\theta=\frac{\pi}{3}$ in the identity above and obtain
\begin{align*}
F''(\pi/3) &= \frac{\rho^2}{\frac{1}{3} - \frac{\rho}{\sqrt{6}}}
  -\frac{2\rho}{\sqrt{6}}\log \lt(\frac{\frac{1}{3} + \frac{2\rho}{\sqrt{6}}}{%
  \frac{1}{3}-\frac{\rho}{\sqrt{6}}}\rt) =
  \frac{\rho^2}{\frac{1}{3} - \frac{\rho}{\sqrt{6}}}
    - \frac{2\rho}{\sqrt{6}}\log \lt(1 + \frac{\frac{3\rho}{\sqrt{6}}}{%
    \frac{1}{3}-\frac{\rho}{\sqrt{6}}}\rt).
\end{align*}
Using the inequality $\log(1+x) < x$ we infer that $F''(\pi/3) > 0$,
proving the proposition.
\end{proof}

\begin{proposition}

  \label{prop:cont_3}
  If $\valpha=(\alpha_1,\alpha_2, \alpha_3)$ is a local maximum point for
  \eqref{opt:relaxed2} in the case $k=3$, and $\alpha_1\ge \alpha_2\ge
  \alpha_3 > \delta$, then $\alpha_1 = \alpha_2$.
  Moreover, the function $F(\theta)$ is strictly decreasing in the
  interval $[\theta_0,\frac{\pi}{3}]$.

\end{proposition}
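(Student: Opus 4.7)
The plan is to combine Proposition~\ref{prop:two_values} (at most two distinct $\alpha_i$'s at any local maximum) with Proposition~\ref{prop:three_minimum} (the configuration $\theta=\pi/3$ is a strict local minimum of $F$), reducing both assertions to locating the critical points of $F(\theta)$ on the constraint manifold. Throughout I may restrict to $1<s<3$: when $s=3$ one has $\rho=0$, the parametrization collapses to the single point $\alpha_1=\alpha_2=\alpha_3=1/3$, and both conclusions hold trivially or vacuously.

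For the first assertion, I apply Proposition~\ref{prop:two_values} directly to the hypothesized local maximum $\valpha$: since $\alpha_3>\delta$, all three coordinates exceed $\delta$, so $|\{\alpha_1,\alpha_2,\alpha_3\}|\le 2$. Combined with the ordering $\alpha_1\ge\alpha_2\ge\alpha_3$, the only possibilities are $\alpha_1=\alpha_2$, $\alpha_2=\alpha_3$, or $\alpha_1=\alpha_2=\alpha_3$. Under \eqref{eqn:reparametrization}, $\alpha_2=\alpha_3$ is exactly $\theta=\pi/3$; but Proposition~\ref{prop:three_minimum} shows this is a strict local minimum of $F$, contradicting the local-maximum hypothesis. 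The remaining two cases both yield $\alpha_1=\alpha_2$.

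For the monotonicity assertion, I plan to show $F'(\theta)<0$ on the open interval $(\theta_0,\pi/3)$. Suppose, for contradiction, that $F'(\theta^*)=0$ for some $\theta^*\in(\theta_0,\pi/3)$. Then $\valpha^*:=\valpha(\theta^*)$ is a critical point of \eqref{opt:relaxed2} with every coordinate strictly greater than $\delta$, since $\alpha_3$ is strictly increasing on $[0,\pi/3]$ and hence $\alpha_3(\theta^*)>\alpha_3(\theta_0)=\delta$. The gradients $(1,1,1)$ and $(2\alpha_1^*,2\alpha_2^*,2\alpha_3^*)$ of the two active equality constraints are independent because $\sum_i(\alpha_i^*)^2=1/s>1/3$ precludes all three $\alpha_i^*$ being equal, so LICQ holds and the Lagrange conditions give $1+\log\alpha_i^*=\mu+2\lambda\alpha_i^*$ for $i=1,2,3$ and some real constants $\mu,\lambda$. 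Strict concavity of $x\mapsto\log x-2\lambda x$ forces the equation $\log x-2\lambda x=\mu-1$ to have at most two positive roots, so $\{\alpha_1^*,\alpha_2^*,\alpha_3^*\}$ has at most two elements; the dichotomy from the first assertion then pins $\theta^*\in\{0,\pi/3\}$, contradicting $\theta^*\in(\theta_0,\pi/3)$.

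Hence $F'$ has no zero on $(\theta_0,\pi/3)$, and by continuity $F'$ maintains a constant sign there. Proposition~\ref{prop:three_minimum} supplies $F'(\pi/3)=0$ with $F''(\pi/3)>0$, so $F'$ is strictly negative immediately to the left of $\pi/3$; combined with constancy of sign, this forces $F'<0$ throughout $(\theta_0,\pi/3)$, and the mean value theorem delivers strict decrease on the closed interval $[\theta_0,\pi/3]$. The main obstacle is that Proposition~\ref{prop:two_values} is stated only for local maxima, so for the monotonicity claim I cannot simply quote it at an arbitrary critical point — I must re-run the Lagrange-multiplier analysis directly. The key input that makes this routine is the strict inequality $1/s>1/3$, which both justifies LICQ and rules out the degenerate all-equal configuration that would otherwise obstruct the argument.
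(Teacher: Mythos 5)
Your proof is correct and follows essentially the same route as the paper: the first assertion is handled exactly as in the paper via Proposition~\ref{prop:two_values} and Proposition~\ref{prop:three_minimum}, and your treatment of the monotonicity claim is precisely the "similar arguments" the paper only sketches (re-running the Lagrange/strict-concavity analysis to exclude interior critical points and using $F'(\pi/3)=0$, $F''(\pi/3)>0$ to fix the sign). Your version actually writes out the details the paper leaves implicit, with only cosmetic looseness (e.g.\ $\alpha_3(\theta_0)=\delta$ only when $\theta_0>0$, and the degenerate case $s=3$ where $\rho=0$), none of which affects correctness.
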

\begin{proof}
Consider any vector $\valpha=(\alpha_1,\alpha_2, \alpha_3)$ achieving
a local maximum of $F$. By \propref{prop:two_values}, we have
$\alpha_1 = \alpha_2$ or $\alpha_2 = \alpha_3$ (since $\alpha_i > \delta$
for all $i=1,2,3$). The case $\alpha_1 > \alpha_2 =
\alpha_3$ corresponds to $\theta = \frac{\pi}{3}$ in the reparametrization
\eqref{eqn:reparametrization}, and by \propref{prop:three_minimum},
$\valpha$ can not be a maximum point for $F$. Therefore, assuming that
$\valpha$ is an optimal solution to \eqref{opt:relaxed2} we conclude that
$\alpha_1 = \alpha_2$ (and thus $\theta=\theta_0 =0$).

Using similar arguments, one can also show that in general
$\theta_0$ and $\frac{\pi}{3}$ are the only two local extremum points of
$F(\theta)$, and thus we have $F(\theta_0)>F(\frac{\pi}{3})$.
Therefore $F(\theta)$ is strictly decreasing in $[\theta_0,\frac{\pi}{3}]$.
\end{proof}

\propref{prop:cont_3} implies the next lemma, which completely solves
the relaxation \eqref{opt:relaxed2} and hence \eqref{opt:relaxed} as well.

\begin{lemma}

  \label{lem:cont_opt}
  If $\valpha = (\alpha_1,\ldots, \alpha_k)$ is a local maximum point
  for \eqref{opt:relaxed2} satisfying $\alpha_1\ge \ldots \ge \alpha_k$
  then there exists an integer $\ell \ge 0$ such that
  $k = \ell + \cei{s^*}$, $\alpha_1=\alpha_2=\cdots=\alpha_{k-\ell-1}\ge
  \alpha_{k-\ell} > \delta$ and $\alpha_{k-\ell+1}=\ldots=\alpha_{k} =\delta$,
  where $s^*=s\frac{(1-\ell\delta)^2}{1-\ell s\delta^2}$.
  In particular, if $\ell = 0$ then $s^*=s$ and $k=\cei{s}$.
  Furthermore, we have
  \begin{equation}
    \label{eqn:bound_cont_opt}
    F(\valpha) \le (1-\ell \delta) \lt[\log(1-\ell\delta) - \log s^*\rt]
     + \ell \delta \log \delta.
  \end{equation}

\end{lemma}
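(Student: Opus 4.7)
The plan is to combine the structural propositions already established with Cauchy--Schwarz and an explicit feasibility calculation in order to pin down the shape of any local maximum, and then to obtain \eqref{eqn:bound_cont_opt} from \propref{prop:jensen_cont} applied to the rescaled non-$\delta$ block.

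Let $\valpha$ be a local maximum with $\alpha_1\ge\cdots\ge\alpha_k$, let $\ell$ be the number of coordinates equal to $\delta$, and set $j := k-\ell$, $S := 1-\ell\delta$, $Q := \tfrac{1}{s}-\ell\delta^2$. By \propref{prop:tight_cont} we have $\sum\alpha_i^2=1/s$, and by \propref{prop:two_values} the set $\{\alpha_1,\ldots,\alpha_k\}\setminus\{\delta\}$ has at most two elements, say $a\ge b>\delta$. The first step is to show that the smaller non-$\delta$ value occurs with multiplicity at most one. Indeed, if $a>b$ and three of the coordinates took values $(a,b,b)$, then after rescaling this triple to have sum $1$ it becomes a vector in the domain of \eqref{opt:relaxed2} for some effective parameter $\tilde s=(a+2b)^2/(a^2+2b^2)\in(1,3)$, and corresponds exactly to the point $\theta=\pi/3$ of the parametrization \eqref{eqn:reparametrization}. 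By \propref{prop:three_minimum}, $\theta=\pi/3$ is a strict local minimum of $F$ on that one-parameter family, so a small perturbation of these three coordinates alone strictly increases $F$ without disturbing the remaining coordinates or violating the inequalities $\alpha_i\ge\delta$, contradicting local maximality. Hence the configuration has the form $(a,\ldots,a,b,\delta,\ldots,\delta)$ with $k-\ell-1$ copies of $a$ and one copy of $b$, where the case $a=b$ is allowed and covers the subcase in which all non-$\delta$ coordinates are equal.

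The main step is then to verify $j=\cei{s^*}$. Cauchy--Schwarz on the $j$ non-$\delta$ coordinates yields $S^2\le jQ$, i.e. $j\ge s^*=S^2/Q$, and since $j\in\mathbb{Z}$ we have $j\ge\cei{s^*}$. For the matching upper bound I would write the feasibility conditions $(j-1)a+b=S$ and $(j-1)a^2+b^2=Q$ as a quadratic in $x:=b-\delta$, and check that the root corresponding to $a\ge b$ satisfies $x>0$ precisely when $s^*(\ell+1)>j-1$, where $s^*(\ell+1):=s(1-(\ell+1)\delta)^2/(1-(\ell+1)s\delta^2)$. Since $j$ is an integer, this yields $j\le\cei{s^*(\ell+1)}$. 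A direct computation shows
\[
  \tfrac{d}{d\ell}s^*(\ell) \;=\; s\cdot\frac{(1-\ell\delta)\,\delta\,\bigl[s\delta(1+\ell\delta)-2\bigr]}{(1-\ell s\delta^2)^2},
\]
so $s^*(\ell)$ is strictly decreasing in $\ell$ whenever $s\delta(1+\ell\delta)<2$, which is automatic from $\delta<1/k$ and $\ell<k$. Consequently $\cei{s^*(\ell+1)}\le\cei{s^*(\ell)}$, and sandwiching gives $j=\cei{s^*(\ell)}=\cei{s^*}$. The degenerate borderline case $b=\delta$ is handled by absorbing $b$ into the $\delta$-block and relabelling $\ell\mapsto\ell+1$ (which, by the same quadratic computation, forces $s^*(\ell+1)$ to be an integer equal to $j-1$).

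Finally, \eqref{eqn:bound_cont_opt} follows from \propref{prop:jensen_cont}: rescaling the $j$ non-$\delta$ coordinates by $1/S$ produces a vector with coordinates summing to $1$ and squared sum $1/s^*$, so Jensen's inequality gives $\sum_{\alpha_i>\delta}\tilde\alpha_i\log\tilde\alpha_i\le-\log s^*$; unwinding the rescaling yields $\sum_{\alpha_i>\delta}\alpha_i\log\alpha_i\le S(\log S-\log s^*)=(1-\ell\delta)\bigl[\log(1-\ell\delta)-\log s^*\bigr]$, and adding the trivial contribution $\ell\delta\log\delta$ from the $\delta$-block completes the bound. The hardest step is the one pinning down $j$ exactly: it requires combining the Cauchy--Schwarz lower bound, the explicit quadratic computation certifying $b>\delta$, and the monotonicity of $s^*(\ell)$ in $\ell$, and it relies crucially on the assumption $\delta<1/k$ built into the relaxation.
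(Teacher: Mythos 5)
Your proposal is correct and follows essentially the same route as the paper: the shape of a local maximum is pinned down by the three-variable analysis (\propref{prop:two_values} together with \propref{prop:three_minimum}, which is exactly how \propref{prop:cont_3} is proved and then applied to the non-$\delta$ coordinates), the identity $k-\ell=\cei{s^*}$ comes from the solvability of the two-value system for the rescaled block, and \eqref{eqn:bound_cont_opt} follows from Jensen's inequality (\propref{prop:jensen_cont}) applied to that block. The only cosmetic difference is that you derive the upper bound $k-\ell\le\cei{s^*}$ via $b>\delta$, the auxiliary quantity $s^*(\ell+1)$ and the monotonicity of $s^*(\cdot)$, whereas the paper gets it directly from the positivity of the smallest non-$\delta$ coordinate, which already forces $k-\ell-1<s^*\le k-\ell$.
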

\begin{proof}
Let $\ell$ be the number of indices $i$ such that $\alpha_i = \delta$.
Because we assumed $\alpha_1\ge \alpha_2\ge \ldots \ge \alpha_k$,
we have $\alpha_{k-\ell+1} = \ldots = \alpha_{k} = \delta$.
If we fix all but three variables $\alpha_i$ in \eqref{opt:relaxed2} we obtain
another instance of \eqref{opt:relaxed2} with $k=3$ (up to rescaling of
the variables and parameters $s$ and $\delta$). Hence \propref{prop:cont_3}
can be applied to this sub-problem, and it implies that for any triple
$1 \le i_1 < i_2 <i_3 \le k-\ell$ we have $\alpha_{i_1}=\alpha_{i_2} \ge
\alpha_{i_3}$. From this we infer that $\alpha_1=\ldots =
\alpha_{k-\ell-1} \ge \alpha_{k-\ell}$. Let $x = \frac{\alpha_1}{1-\ell\delta}$
and $y = \frac{\alpha_{k-\ell}}{1-\ell\delta}$. We have $x \ge y >
\frac{\delta}{1-\ell\delta}$, $(k-\ell-1)x + y = 1$ and $(k-\ell-1)x^2 + y^2 =
\frac{1}{s^*}$. This system has a solution provided that $k-\ell \ge s^* >
k-\ell - 1$, or equivalently $k - \ell = \cei{s^*}$. To finish the proof of
the lemma we use \propref{prop:jensen_cont} on the sub-problem restricted
to the first $k-\ell$ variables. The proposition states that
$(k-\ell-1) x \log x + y \log y \le -\log s^*$, which combined
with the identity
\[
F(\valpha) = (1-\ell\delta)\lt[\log(1-\ell\delta) +
  (k-\ell-1) x \log x + y \log y\rt]+\ell \delta \log \delta
\]
implies \eqref{eqn:bound_cont_opt}, and we are done.
\end{proof}

\subsection{Solving the optimization problem for integer $s$}
\label{sec:sol_integer}

We know from \propref{prop:jensen_cont} that any solution of
$\OPT(s)$ has objective value at most $\log q-\log s$, whenever
$s$ is an integer number. But how close to this bound is the
$s$-balanced vector for $q$? The answer to this question is
contained in the following proposition, which shall be used as
a benchmark for comparison with other candidate solutions.

\begin{proposition}

  \label{prop:good_approx}
  For $s$ integer and $q\ge s$, if $\valpha$ is the $s$-balanced
  vector for $q$ then
  \[
    \OBJ(\valpha) \ge \log q - \log s -\frac{s^2}{2q^2}.
  \]

\end{proposition}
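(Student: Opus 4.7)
Write $q = st + r$ with integers $t \ge 1$ and $0 \le r < s$ (the case $t=0$ is excluded by $q \ge s$). Then the $s$-balanced vector $\valpha$ has $r$ coordinates equal to $\tfrac{1}{s}$ supported on sets of size $t+1$ and $s-r$ coordinates equal to $\tfrac{1}{s}$ supported on sets of size $t$, so
\[
\OBJ(\valpha) = \tfrac{r}{s}\log(t+1) + \tfrac{s-r}{s}\log t.
\]
Setting $p := r/s \in [0,1)$ and noting that $\log q - \log s = \log(t+p)$ and $s^2/q^2 = 1/(t+p)^2$, the target inequality is equivalent to
\[
p \log(t+1) + (1-p)\log t - \log(t+p) \ge -\frac{1}{2(t+p)^2}.
\]
By concavity of $\log$, the left-hand side is non-positive; the task is merely to quantify how negative it can be.

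The key observation is that $t+p$ is exactly the weighted average $p(t+1) + (1-p)t$. So if one applies Taylor's theorem with Lagrange remainder to expand $\log(t+1)$ and $\log t$ around the point $t+p$, the first-order terms cancel after taking the weighted combination, and what remains is
\[
p \log(t+1) + (1-p)\log t = \log(t+p) - \frac{p(1-p)^2}{2\xi_1^2} - \frac{(1-p)p^2}{2\xi_2^2}
\]
for some $\xi_1 \in (t+p, t+1)$ and $\xi_2 \in (t, t+p)$. Bounding $\xi_1, \xi_2 \ge t$ yields the uniform estimate
\[
p\log(t+1) + (1-p)\log t - \log(t+p) \ge -\frac{p(1-p)}{2t^2}.
\]

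The final step is to show $\tfrac{p(1-p)}{t^2} \le \tfrac{1}{(t+p)^2}$, i.e. $p(1-p)(t+p)^2 \le t^2$. Since $p(1-p) \le \tfrac14$ and, for $t \ge 1$, $(t+p)^2 \le (t+1)^2 \le 4 t^2$, this inequality holds. The only (mild) obstacle is this concluding quadratic estimate: it becomes tight as $t \to 1$, which is precisely where the hypothesis $q \ge s$ is needed to exclude the degenerate case $t = 0$. Everything else is a routine Taylor expansion built around the fact that $t+p$ is the weighted mean of $t$ and $t+1$.
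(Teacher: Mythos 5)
Your proposal is correct and follows essentially the same route as the paper: both arguments reduce to the concavity defect of $\log$ on $[t,t+1]$, bounded via $|(\log)''|\le 1/t^2$ there (the paper packages this as Jensen applied to the convex function $\log x + \frac{x^2}{2t^2}$, you as a Lagrange-remainder Taylor expansion about the mean $t+p$), and both arrive at the same intermediate bound $\OBJ(\valpha)\ge \log(q/s)-\frac{p(1-p)}{2t^2}$ and the same final inequality $p(1-p)(t+p)^2\le t^2$ using $t\ge 1$. No gaps.
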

\begin{proof}
We divide $q$ by $s$ (with remainder) as $q = st + r$ where $t,r$ are
integers and $0 \le r < s$. Note that $t\ge 1$ because $q\ge s$.
If we denote by $\valpha$ the $s$-balanced vector for $q$ then
\[
  \OBJ(\valpha) = \frac{r}{s} \log(t+1) + \frac{s-r}{s} \log t.
\]
Let $f(x):= \log x$. If $x\in[t,t+1]$ then $-\frac{1}{t^2} \le f''(x)
\le -\frac{1}{(t+1)^2}$, so the function $g(x):=\log x + \frac{x^2}{2t^2}$
is convex. By Jensen's inequality, we have $g(\frac{q}{s}) \le
\frac{1}{s}\sum_{i=1}^s g(A_i)$ for any balanced partition $A_1,\ldots,A_s$ of $[q]$, 
which implies
\[
  \OBJ(\valpha) \ge \log q - \log s - \frac{1}{2t^2}
  \cdot \frac{r(s-r)}{s^2} \ge \log q - \log s - \frac{s^2}{2q^2},
\]
thereby proving the proposition.
\end{proof}

\bigskip

Throughout the remaining of the section, the vectors $\valpha=(\alpha_1,
\ldots,\alpha_k, A_1,\ldots,A_k)$ we consider are from the domain of \eqref{opt:rephrased}.  
The following list contains the proposed steps towards the proof
of \thmref{thm:Turan_OPT}:
\begin{enumerate}[(I) -]
\item If $\valpha=(\alpha_1,\ldots,\alpha_k,
A_1, \ldots, A_k)$ is a solution of $\OPT(s)$ then either $k=s$
or $k=s+1$. Moreover if $k=s+1$ then $\alpha_k$ is small and $A_k=1$.
This statement is a consequence of the following.
\begin{enumerate}[({I}-1) -]
\item A discrete analog of \propref{prop:fixed_alpha} holds,
that is, $A_i \approx \alpha_i \cdot q$ for all $i$.
\item By using the continuous relaxation of \eqref{opt:rephrased}
from \secref{sec:relaxation} we prove that if $k\ge s+1$ then
$\alpha_k$ is tiny. Moreover, if $k\ge s+2$ then both $\alpha_{k-1}$
and $\alpha_k$ are small.
\item If both $\alpha_{k-1}$ and $\alpha_k$
are sufficiently small, then $\valpha$ cannot be a solution of
$\OPT(s)$.
\end{enumerate}
\item If $\valpha=(\alpha_1,\ldots,\alpha_{s+1},
A_1, \ldots, A_{s+1})$ is such that $A_{s+1} = 1$, then we can estimate
\[
  \OBJ(\valpha) \le S_1 + \frac{S_2-S_1^2}{2S_1},
\]
where $S_1=\frac{1}{s}\sum_{i=1}^s \log A_i$ and $S_2=\frac{1}{s}
\sum_{i=1}^s\log^2 A_i$. Moreover, if $\alpha_{s+1}\le \frac{1}{50q}$
then $S_2 - S_1^2 \le \frac{S_1^2}{20q}$.
\item The expression $S_1 + \frac{S_2-S_1^2}{2S_1}$ in step (II)
is maximized when $(A_1,\ldots, A_s)$ forms a balanced partition of
$q-1$.
\item The $s$-balanced vector is better than any candidate solution
with $k=s+1$ satisfying the conditions stated in step (I).
\end{enumerate}

Let us begin with discrete equivalent of \propref{prop:fixed_alpha}
mentioned in step (I-1).

\begin{proposition}

  \label{prop:ratio_alpha_A}
  Let $\valpha=(\alpha_1,\ldots,\alpha_k, A_1,\ldots, A_k)>0$ be a
  solution of $\OPT(s)$, where $s$ is any real parameter. For each
  $i$, we have $|A_i - \alpha_i q|<1+(k-2)\alpha_i$.
  In particular, $\alpha_i \ge \frac{1}{2q}$ whenever $A_i\ge 2$.

\end{proposition}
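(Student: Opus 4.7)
The proposition is a discrete analogue of \propref{prop:fixed_alpha}. Since $\valpha$ solves $\OPT(s)$ and its support forms a partition of $[q]$ (given the parametrization), the basic idea is to perform \emph{color-swapping} operations that keep $\V$ and $\E$ unchanged, so that optimality of $\valpha$ translates directly into constraints on the $A_i$'s. Concretely, for $j\ne i$ with $A_j\ge 2$, move a single color from the $j$-th part to the $i$-th part. The resulting vector $\valpha'$ has the same weights $\alpha_i$ attached to a new partition of $[q]$ with sizes $A_i+1$ and $A_j-1$, so $\V(\valpha')=1$ and $\E(\valpha')=\E(\valpha)$ (all pairs in a partition are disjoint). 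Optimality gives
\[
  \alpha_i\log\tfrac{A_i+1}{A_i}\ \le\ \alpha_j\log\tfrac{A_j}{A_j-1}.
\]

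The next step is to linearize this via the strict inequalities $\tfrac{x}{1+x}<\log(1+x)<x$ applied to $x=1/A_i$ on the left and $x=1/(A_j-1)$ on the right. This yields
\[
  \frac{\alpha_i}{A_i+1}\ <\ \frac{\alpha_j}{A_j-1},
  \qquad\text{equivalently}\qquad
  \alpha_i A_j-\alpha_j A_i\ <\ \alpha_i+\alpha_j.
\]
Doing the symmetric swap (move a color out of $A_i$ when $A_i\ge 2$) gives the reversed inequality $\alpha_j A_i-\alpha_i A_j<\alpha_i+\alpha_j$ for every $j\ne i$. Summing this over all $j\ne i$ and using $\sum_j \alpha_j=1$, $\sum_j A_j=q$, the left-hand side collapses to $A_i-\alpha_i q$ and the right-hand side to $1+(k-2)\alpha_i$, which proves $A_i-\alpha_i q<1+(k-2)\alpha_i$ whenever $A_i\ge 2$. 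The analogous sum of the first inequality, restricted to indices $j\in S:=\{j\ne i:A_j\ge 2\}$, yields $\alpha_i q-A_i<1+(k-2)\alpha_i-(A_i+1)\!\sum_{j\in T}\alpha_j$ where $T:=\{j\ne i:A_j=1\}$, and dropping the (nonpositive) last term gives the desired bound.

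The only delicate case is the ``upper'' bound $\alpha_i q-1<1+(k-2)\alpha_i$ when $A_i=1$: the above argument requires $A_i\ge 2$ to swap \emph{out} of the $i$-th part. To handle it, I use the swap \emph{into} the singleton $A_i$ from each $j\in S$, which gives $\alpha_i\log 2\le \alpha_j/(A_j-1)$ (note $\log(1+1/A_i)=\log 2$ here). Summing yields $\alpha_i(\log 2)(q-k)\le 1-\alpha_i$, so $\alpha_i\le 1/\bigl(1+(q-k)\log 2\bigr)$; since $2\log 2>1$, a direct comparison shows this is $<2/(q-k+2)$ whenever $q>k$, equivalent to the desired bound. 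The edge case $q=k$ (all singletons) is immediate since then $\alpha_i<1$ suffices. The ``in particular'' clause is then a one-line corollary: from $A_i-\alpha_i q<1+(k-2)\alpha_i$ with $A_i\ge 2$, one gets $\alpha_i(q+k-2)>A_i-1\ge 1$, and since $k\le q$, this forces $\alpha_i>1/(2q-2)>1/(2q)$.

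\textbf{Main obstacle.} The nontrivial point is treating $A_i=1$: the swap-out argument is unavailable, so one must extract the bound from the swap-in inequality and verify that the constant $2\log 2$ is large enough to close the gap between the crude bound $\alpha_i\le 1/(1+(q-k)\log 2)$ and the target $\alpha_i<2/(q-k+2)$. Everything else is routine manipulation of the two linearized swap inequalities.
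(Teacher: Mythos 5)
Your proof is correct and is essentially the paper's argument: exploit optimality under moving a single color between two parts, linearize the logarithms via $\log(1+x)<x$ (resp.\ $\tfrac{x}{1+x}<\log(1+x)$), and sum the resulting pairwise inequalities $|\alpha_i A_j-\alpha_j A_i|<\alpha_i+\alpha_j$ over $j\ne i$ to get $|A_i-\alpha_i q|<1+(k-2)\alpha_i$, with the same one-line deduction of $\alpha_i\ge\frac{1}{2q}$. The only difference is bookkeeping around singleton parts: your sum over $S$ already yields $\alpha_i q-A_i<1+(k-2)\alpha_i$ even when $A_i=1$ (that swap only needs $A_j\ge 2$), so your separate ``delicate case'' is redundant, while the direction your write-up actually skips at $A_i=1$, namely $A_i-\alpha_i q<1+(k-2)\alpha_i$, is trivially true since its left side is $1-\alpha_i q<1$ --- which is exactly the one-line remark the paper makes before summing the two-sided inequality over all $j\ne i$.
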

\begin{proof}
Fix an arbitrary integer $i$. We shall prove that $\alpha_i A_j-
\alpha_j A_i > -\alpha_i - \alpha_j$ for every other $j\ne i$.
This inequality is trivially true when $A_i=1$,
so we may assume that $A_i \ge 2$. For each value of $j\ne i$
we have $\alpha_i \log(A_i - 1) + \alpha_j \log(A_j + 1) \le \alpha_i \log A_i +
\alpha_j \log A_j$. This is because $\valpha$ is a solution of $\OPT(s)$
and thus $\OBJ(\valpha) \ge \OBJ(\valpha')$, where $\valpha'$
is obtained from $\valpha$ by replacing $A_i$ with $A_i-1$
and $A_j$ with $A_j + 1$. Since $\log(1+x)<x$ for all $x>-1$, we have
\[
  0 \le \alpha_i \log\lt(1+\frac{1}{A_i-1}\rt) +
  \alpha_j \log\lt(1-\frac{1}{A_j+1}\rt) <
  \frac{\alpha_i}{A_i-1} - \frac{\alpha_j}{A_j+1},
\]
which implies $\alpha_i A_j -\alpha_j A_i > -\alpha_i - \alpha_j$.
By switching the roles of $i$ and $j$ we also obtain
$\alpha_j A_i -\alpha_i A_j > -\alpha_i -\alpha_j$, which
implies $|\alpha_i A_j -\alpha_j A_i| < \alpha_i + \alpha_j$.

Adding up these inequalities for all $j\ne i$, we obtain
$|\alpha_i(q - A_i) - (1-\alpha_i)A_i| < 1 +(k-2)\alpha_i$,
thereby finishing the proof of the proposition.
\end{proof}

We turn to step (I-2) which can is stated in the following lemma.
\begin{lemma}

  \label{lem:small_two}
  Let $\valpha=(\alpha_1,\ldots,\alpha_k, A_1,\ldots, A_k)>0$ be a
  solution of $\OPT(s)$ for large enough integer $s$, where
  $\alpha_1\ge \alpha_2\ge \ldots \ge \alpha_k$ and $q \ge 100
  \frac{s^2}{\log s}$. If $k \ge s + 1$ then $\alpha_{k} < \frac{1}{50q}$.
  Moreover, if $k \ge s + 2$ then $\alpha_{k-1} < \frac{1}{40q}$.

\end{lemma}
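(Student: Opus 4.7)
The strategy is to upper-bound $F(\valpha):=\sum_{i=1}^k \alpha_i\log\alpha_i$ via the continuous relaxation of \secref{sec:relaxation} and compare the result with the lower bound for $\OPT(s)$ coming from the $s$-balanced vector. By \propref{prop:fixed_alpha} applied to the relaxation, any feasible discrete $\valpha$ satisfies $\OBJ(\valpha)\le \log q + F(\valpha)$; meanwhile, \propref{prop:good_approx} gives $\OPT(s)\ge \log q-\log s-\tfrac{s^2}{2q^2}$. Hence any optimal $\valpha$ must satisfy $F(\valpha)\ge -\log s-\tfrac{s^2}{2q^2}$, and it suffices to derive a strictly tighter upper bound on $F(\valpha)$ under each of the two hypotheses.

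For the first claim, suppose towards contradiction that $\alpha_k\ge \delta:=\tfrac{1}{50q}$, so $\valpha$ is feasible for \eqref{opt:relaxed2} with threshold $\delta$. Its global maximum is attained at a local maximizer, to which \lemref{lem:cont_opt} applies; the admissible value in the lemma is $\ell=k-\lceil s^*\rceil\ge 1$ because for $\delta\ll 1/s$ one has $s^*\in(s-1,s)$ and hence $\lceil s^*\rceil=s$ (using that $s$ is an integer). A first-order expansion of \eqref{eqn:bound_cont_opt} then yields
\[
F(\valpha)\le -\log s+\ell\delta\bigl(1+\log(s\delta)\bigr)+O\!\left(\ell^2\delta^2+\ell s\delta^2\right).
\]
Since $s\delta\le \log s/(5000\,s)$, the bracket is at most $-\tfrac{1}{2}\log s$ for $s$ large, so $F(\valpha)\le -\log s-\log s/(100\,q)$. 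For $q\ge 100\,s^2/\log s$, this is strictly less than $-\log s-s^2/(2q^2)$, contradicting the earlier lower bound.

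The moreover is trickier because the first part forces $\alpha_k<\tfrac{1}{50q}<\tfrac{1}{40q}$, so \eqref{opt:relaxed2} with threshold $\tfrac{1}{40q}$ cannot be applied to all $k$ coordinates directly. I would renormalize by splitting off $\alpha_k$: set $\mu:=1-\alpha_k$ and $\hat\alpha_i:=\alpha_i/\mu$ for $i\le k-1$, so that $\hat\valpha$ is feasible for the $(k-1)$-variable analogue of \eqref{opt:relaxed2} with real parameter $\hat s:=s\mu^2/(1-s\alpha_k^2)$. A direct computation shows $\hat s\in(s-1,s)$, whence $\lceil\hat s\rceil=s$ and $k-1\ge s+1\ge \lceil\hat s\rceil+1$; moreover $\hat\alpha_i\ge \alpha_{k-1}\ge \tfrac{1}{40q}$. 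Applying the first-part argument to $\hat\valpha$ with $\hat\delta:=\tfrac{1}{40q}$ gives $\hat F(\hat\valpha)\le -\log\hat s-\log s/(80\,q)$. Transferring back through $F(\valpha)=\mu\log\mu+\mu\hat F(\hat\valpha)+\alpha_k\log\alpha_k$ and using $s\alpha_k<1/50$ to check that $\mu\log\mu-\mu\log\hat s+\alpha_k\log\alpha_k\le -\log s$, one concludes $F(\valpha)\le -\log s-\log s/(90\,q)$, contradicting the lower bound $F(\valpha)\ge -\log s-s^2/(2q^2)$ for $q\ge 100\,s^2/\log s$.

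The main obstacle is carefully controlling the error terms. In the first-order expansion of \eqref{eqn:bound_cont_opt}, one must verify that the savings $\tfrac{1}{2}\ell\delta\log s$ genuinely dominates the $O(\ell^2\delta^2+\ell s\delta^2)$ remainder throughout the full regime $q\ge 100\,s^2/\log s$; in the moreover part, one needs $\hat s\in(s-1,s)$ so that $\lceil\hat s\rceil=s$ and the reduced problem has $\ell\ge 1$, and one must also check that the correction $\mu\log\mu+\alpha_k\log\alpha_k$ together with the slippage from $-\log\hat s$ to $-\log s$ does not erode the savings from the reduced problem.
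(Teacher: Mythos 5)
Your argument is correct and, for the first claim, is essentially the paper's own proof in disguise: what you re-derive by a first-order expansion of \eqref{eqn:bound_cont_opt} is exactly the content of \lemref{lem:small_alphak}, which the paper obtains from \lemref{lem:cont_opt} by a monotonicity-in-$\ell$ argument rather than an expansion, and then plays off against \propref{prop:good_approx} just as you do; your error bookkeeping does close, since $\ell\delta\le 1/50$ uniformly, so the remainder is $O(\ell\delta/\log s)$ times the savings $\tfrac12\ell\delta\log s$. One small inaccuracy: your justification that $\ell\ge 1$ via ``$s^*\in(s-1,s)$, hence $\cei{s^*}=s$'' is not valid when $\ell$ is large (with $\ell\delta$ up to $1/50$ one can have $s-s^*>1$); but all you need is $s^*\le s$, which gives $\cei{s^*}\le s\le k-1$ and hence $\ell=k-\cei{s^*}\ge 1$, so nothing breaks. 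For the ``moreover'' part your route genuinely differs in one respect: the paper first uses \propref{prop:ratio_alpha_A} to force $A_k=1$, passes to the ground set $[q-1]$, and reapplies \lemref{lem:small_alphak} to $\OBJq{q-1}(\valpha')$ with $s'=s\frac{(1-\alpha_k)^2}{1-s\alpha_k^2}$, whereas you never leave the continuous relaxation: you renormalize the entropy functional $F$ via $F(\valpha)=\mu\log\mu+\mu\hat F(\hat\valpha)+\alpha_k\log\alpha_k$ and control $\OBJ$ only through the Gibbs inequality $\OBJ(\valpha)\le\log q+F(\valpha)$. This avoids \propref{prop:ratio_alpha_A} and the $[q-1]$-color detour entirely, at the cost of the extra check $\mu\log\mu-\mu\log\hat s+\alpha_k\log\alpha_k\le-\log s$, which indeed reduces to $\alpha_k\lt(1+\log(s\alpha_k)\rt)\le 0$ and holds since $s\alpha_k<1/50$; the renormalized parameter $\hat s$ is the same quantity $s'$ the paper uses, and here (unlike in part one) the bound $\hat s\in(s-1,s)$ is legitimate because only the single tiny coordinate $\alpha_k$ is removed. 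Both routes end by comparing against the $s$-balanced benchmark of \propref{prop:good_approx} in the regime $q\ge 100 s^2/\log s$, so your proposal is a sound, slightly leaner variant of the paper's argument.
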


Before proving \lemref{lem:small_two}, we need the following corollary
of \lemref{lem:cont_opt}.

\begin{lemma}

  \label{lem:small_alphak}
  Let $\valpha=(\alpha_1,\ldots,\alpha_k, A_1,\ldots, A_k)>0$ be any
  element in the feasible set $\FEAS(s)$  for large enough $s$
  (not necessarily integer). If $k > \cei{s}$ and $\alpha_1\ge
  \ldots \ge \alpha_k \ge \frac{1}{50q}$ then
  \[
    \OBJ(\valpha) < \log q - \log s -
    \frac{1}{150q} \log\lt(\frac{50q}{s}\rt).
  \]

\end{lemma}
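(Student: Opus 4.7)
The strategy is to reduce the question to the continuous relaxation $F(\valpha) := \sum_{i=1}^k \alpha_i \log \alpha_i$ studied in \secref{sec:relaxation}, and then to apply \lemref{lem:cont_opt} with $\delta = \frac{1}{50q}$ to extract a quantitative slackness in Jensen's inequality.

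First I would establish the basic estimate $\OBJ(\valpha) \le \log q + F(\valpha)$. Keeping the $\alpha_i$'s fixed, the map $(A_1,\ldots,A_k) \mapsto \sum_i \alpha_i \log A_i$ is concave and subject only to $\sum_i A_i = q$; a Lagrange multiplier calculation shows that its maximum over nonnegative reals is attained at $A_i = \alpha_i q$, with value $\log q + F(\valpha)$. This relaxes the integrality of the $A_i$'s at the cost of only weakening the upper bound. Next, since $F$ is convex, its maximum over the convex region $\{\sum_i \alpha_i = 1,\ \sum_i \alpha_i^2 \le 1/s,\ \alpha_i \ge \delta\}$ is attained at an extreme point, which necessarily lies on the boundary $\sum_i \alpha_i^2 = 1/s$; this is exactly the feasible set of \eqref{opt:relaxed2}. \lemref{lem:cont_opt} then yields an integer $\ell \ge 0$ with $k - \ell = \cei{s^*}$, where $s^* = s(1-\ell\delta)^2/(1 - \ell s\delta^2)$, together with the bound \eqref{eqn:bound_cont_opt}. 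A brief computation shows $s^* \le s$ in our regime, hence $\cei{s^*} \le \cei{s}$; combined with $k > \cei{s}$ this forces $\ell \ge 1$, and so $u := \ell \delta \ge \delta = \tfrac{1}{50q}$.

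To finish, I would expand \eqref{eqn:bound_cont_opt}. Substituting $\log s^* = \log s + 2\log(1-u) - \log(1-us\delta)$ and rearranging yields
\[
F(\valpha) + \log s \;\le\; u\log(s\delta) \;-\; (1-u)\log(1-u) \;+\; (1-u)\log(1-us\delta).
\]
Because $q \ge 100 s^2/\log s$ and $\ell \le k \le q$, both $u$ and $us\delta$ are much smaller than $1$, so standard Taylor estimates reduce the right-hand side to $u\bigl[1 + \log(s\delta)\bigr] + O(u^2 + us\delta) = -u\bigl[\log(50q/s) - 1\bigr] + o(u)$. The hypothesis $q \ge 100 s^2/\log s$ makes $\log(50q/s) \ge \log(5000 s/\log s)$ much larger than $1$ for $s$ large, so the main term dominates and gives a slackness of at least $\tfrac{u}{2}\log(50q/s) \ge \tfrac{1}{100 q}\log(50q/s)$. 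Combining with $\OBJ(\valpha) \le \log q + F(\valpha)$ yields the claimed bound with room to spare.

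The main obstacle lies in the final numerical step: one must check that the lower-order error terms from the Taylor expansions of $(1-u)\log(1-u)$ and $\log(1-us\delta)$ are genuinely dominated by the principal slackness term $u\log(50q/s)$. The hypothesis $q \ge 100 s^2/\log s$ is precisely what is needed to make $s\delta$ tiny while simultaneously making $\log(50q/s)$ much larger than any absolute constant appearing in the expansion; this margin is what allows the universal constant to be pushed as low as $\frac{1}{150}$.
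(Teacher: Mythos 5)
Your argument is correct and takes essentially the same route as the paper: bound $\OBJ(\valpha)\le \log q+F(\valpha)$, pass to the relaxation with $\delta=\tfrac{1}{50q}$, use $k>\cei{s}$ to force $\ell\ge 1$ in \lemref{lem:cont_opt}, and extract the slack from \eqref{eqn:bound_cont_opt} (the paper carries out this last step by showing its explicit bound is decreasing in $\ell$ and plugging in $\ell=1$, rather than by a Taylor expansion in $u=\ell\delta$). One small remark: the hypothesis $q\ge 100s^2/\log s$ is not part of the lemma and is not actually needed for your estimates, since $s\le q$ already gives $u\le\tfrac{1}{50}$, $s\delta\le\tfrac{1}{50}$ and $\log(50q/s)\ge\log 50$, which is all your final comparison requires.
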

\begin{proof}
Clearly $k \le q$. We consider the continuous optimization problem
\eqref{opt:relaxed} for $s$, $k$, and $\delta:= \frac{1}{50q}$,
and we apply \lemref{lem:cont_opt} to this particular instance.
Since $k > \cei{s}$ we have $\ell > 0$. Moreover, because $\ell,s\le q$,
we have $(50q-\ell)^2 < 2500q^2 -s\ell$ and thus
$s^*=s\cdot \frac{(50q-\ell)^2}{2500q^2 -s\ell} < s$.
Note that $\valpha$ is an element of the domain of \eqref{opt:relaxed}
for our choice of parameters, and by \eqref{eqn:bound_cont_opt} we have
\begin{align*}
  \OBJ(\valpha) &\le \log q+F(\valpha)\le \log q - \lt(1-\frac{\ell}{50q}\rt)\log s^*
  - \frac{\ell \log(50q)}{50q} \\
  &\le \log q - \log s + \log\lt(\frac{2500q^2 - s\ell}{(50q-\ell)^2}\rt)
  -\frac{\ell}{50q}\log\lt(\frac{50q}{s}\rt).
\end{align*}
The function $x\mapsto \log\lt(\frac{2500q^2 - sx}{(50q-x)^2}\rt)
-\frac{x}{50q}\log\lt(\frac{50q}{s}\rt)$ is decreasing
for $1 \le x \le k$, since its derivative is
\[
  -\frac{s}{2500q^2-sx} + \frac{2}{50q-x} -
  \frac{1}{50q}\log\lt(\frac{50q}{s}\rt) <
  \frac{2}{49q} - \frac{\log 50}{50q} < 0,
\]
therefore we conclude that
\begin{align*}
  \OBJ(\valpha) + \log s -\log q &\le \log\lt(1+\frac{100q-s-1}{(50q-1)^2}\rt)
  - \frac{1}{50q}\log\lt(\frac{50q}{s}\rt) \\
  &< \frac{100q}{(50q-1)^2}-
   \frac{1}{50q}\log\lt(\frac{50q}{s}\rt) <
  -\frac{1}{150q}\log\lt(\frac{50q}{s}\rt)
\end{align*}
finishing the proof.
\end{proof}
Equipped with \lemref{lem:small_alphak} we can now prove \lemref{lem:small_two}.

\begin{proof}[Proof of \lemref{lem:small_two}]
Let us first prove that for $k\ge s + 1$ we have $\alpha_k < \frac{1}{50q}$.
Suppose, for contradiction, that $k \ge s+1$ and $\alpha_k\ge\frac{1}{50q}$.
By \lemref{lem:small_alphak}, we have
\[
  \OBJ(\valpha) < \log q - \log s - \frac{1}{150q}\log\lt(\frac{50q}{s}\rt),
\]
but this is a contradiction, since \propref{prop:good_approx} implies that
$\OBJ(\valpha)=\OPT(s) \ge \log q - \log s - \frac{s^2}{2q^2}$
and $\frac{1}{150q}\log\lt(\frac{50q}{s}\rt) > \frac{s^2}{2q^2}$. For the
second part suppose, towards contradiction, that $k\ge s + 2$ and
$\alpha_{k-1} \ge \frac{1}{40q}$. Since $\alpha_k < \frac{1}{50q}$,
from \propref{prop:ratio_alpha_A} we infer that $A_k = 1$. 
Let $\alpha_i':=\frac{\alpha_i}{1-\alpha_k}$ for $i=1,\ldots, k-1$, let $\valpha':=(\alpha_1',
\ldots, \alpha_{k-1}', A_1,\ldots, A_{k-1})$ and let $s':=s\cdot\frac{
(1-\alpha_k)^2}{1-s\alpha_k^2}$. Clearly $s'<s$, $\sum_{i=1}^{k-1}\alpha_i' = 1$
and $\sum_{i=1}^{k-1}(\alpha_i')^2 = \frac{1}{s'}$, hence $\valpha'\in
\FEASq{q-1}(s')$. Moreover $\OBJ(\valpha) = (1-\alpha_k)\OBJq{q-1}(\valpha')$.
Furthermore, since $\alpha_{k-1}'>\frac{1}{40q} >\frac{1}{50(q-1)}$ and
$k-1 \ge s+1>s'+1$ we can apply \lemref{lem:small_alphak} and deduce
\[
  \OBJq{q-1}(\valpha') < \log (q-1) - \log s' - \frac{1}{150(q-1)}
  \log \lt(\frac{50(q-1)}{s'}\rt).
\]
This together with $\log s' = \log s + \log\lt(1-\frac{2\alpha_k-(s+1)
\alpha_k^2}{1-s\alpha_k^2}\rt)> \log s - 3\alpha_k$ implies that
\begin{align*}
  \OBJ(\valpha) \le \OBJq{q-1}(\valpha')&\le
  \left(\log q-\frac{1}{2q}\right)-\log s+3\alpha_k-\frac{1}{150(q-1)}
  \log \lt(\frac{50(q-1)}{s}\rt)\\
  &< \log q-\log s-\frac{1}{150(q-1)}  \log \lt(\frac{50(q-1)}{s}\rt),
\end{align*}
which gives us a contradiction as before.
\end{proof}

The following lemma establishes step (I) by combining the steps
(I-1) and (I-2) together with the proof of (I-3).

\begin{lemma}

  \label{lem:k_or_kplus1}
  Let $\valpha=(\alpha_1,\ldots,\alpha_k, A_1,\ldots, A_k)>0$ be a
  solution of $\OPT(s)$ for large enough integer $s$. If
  $q\ge 100\frac{s^2}{\log s}$, then we must have either
  $k = s$ or $k = s + 1$.

\end{lemma}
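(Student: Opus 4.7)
The plan is to combine \thmref{thm:structure}, \lemref{lem:small_two}, and the universal upper bound from \propref{prop:jensen_cont} to rule out the case $k \ge s+2$. Since $s$ is an integer, \thmref{thm:structure} together with \propref{prop:step_5} forces $\SUPP(\valpha) \in \mc{P}_k$ for some $k \ge \cei{s} = s$; that is, $A_1,\ldots,A_k$ partition $[q]$. Reordering so that $\alpha_1 \ge \alpha_2 \ge \cdots \ge \alpha_k$, it suffices to derive a contradiction under the assumption $k \ge s+2$.

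Under this assumption, \lemref{lem:small_two} immediately yields $\alpha_{k-1} < \frac{1}{40q}$ and $\alpha_k < \frac{1}{50q}$. The contrapositive of \propref{prop:ratio_alpha_A} (any $A_i\ge 2$ forces $\alpha_i \ge \frac{1}{2q}$) then forces $A_{k-1}=A_k=1$. Hence the last two parts are tiny-weight singletons whose contribution to $\OBJ(\valpha)$ is exactly zero.

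The crux of the argument is to ``delete'' these two singletons and descend to the smaller ground set $[q]\setminus(A_{k-1}\cup A_k)$ of size $q-2$. Setting $\eta := \alpha_{k-1}+\alpha_k$ and $\alpha'_i := \alpha_i/(1-\eta)$ for $i \le k-2$, a short computation shows that $\valpha':=(\alpha_1',\ldots,\alpha_{k-2}',A_1,\ldots,A_{k-2})$ lies in $\FEASq{q-2}(s')$ for
\[
  s' \;=\; s\cdot\frac{(1-\eta)^2}{1-s(\alpha_{k-1}^2+\alpha_k^2)}.
\]
Since $\eta$ is of order $1/q$, one routinely checks that $s' \le s$ and $\log s' \ge \log s - O(\eta)$. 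Because the two deleted singletons contribute nothing to the objective, $\OBJ(\valpha) = (1-\eta)\cdot\OBJq{q-2}(\valpha')$. Applying the universal upper bound $\OBJq{q-2}(\valpha') \le \log(q-2) - \log s'$ from \propref{prop:jensen_cont} together with $\log(q-2) \le \log q - \tfrac{2}{q}$ then gives
\[
  \OBJ(\valpha) \;\le\; \log q - \log s - \frac{2}{q} + O(\eta).
\]

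To close the argument, I would contrast this with the lower bound $\OPT(s) \ge \log q - \log s - \tfrac{s^2}{2q^2}$ supplied by \propref{prop:good_approx} and the $s$-balanced vector. Under the hypothesis $q \ge 100s^2/\log s$ both the error term $\tfrac{s^2}{2q^2}$ and the correction $O(\eta) = O(1/q)$ are dominated by the gain $\tfrac{2}{q}$, producing the strict inequality $\OBJ(\valpha) < \OPT(s)$ and contradicting the optimality of $\valpha$. The one delicate point, and in my view the main obstacle, will be tracking the constants in the $O(\eta)$ term carefully: one must verify that $\tfrac{2}{q} - O(\eta)$ strictly beats $\tfrac{s^2}{2q^2}$, which ultimately rests on $\eta < 9/(200q)$ and on the fact that $q$ is much larger than both $s^2$ and $s$ under our hypothesis.
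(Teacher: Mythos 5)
Your reductions up to the deletion step are sound: the support is a partition by \thmref{thm:structure} and \propref{prop:step_5}, \lemref{lem:small_two} gives $\alpha_{k-1}<\frac{1}{40q}$ and $\alpha_k<\frac{1}{50q}$, \propref{prop:ratio_alpha_A} then forces $A_{k-1}=A_k=1$, and the renormalized vector $\valpha'$ does lie in $\FEASq{q-2}(s')$ with $\OBJ(\valpha)=(1-\eta)\OBJq{q-2}(\valpha')$ (this is the same maneuver the paper itself uses inside the proof of \lemref{lem:small_two}, there with one deleted singleton). The genuine gap is the final comparison. In the worst case $\eta\to 0$ (nothing in the hypotheses bounds $\alpha_{k-1},\alpha_k$ from below), your upper bound from \propref{prop:jensen_cont} is essentially $\log(q-2)-\log s\approx \log q-\log s-\frac{2}{q}$, and to contradict \propref{prop:good_approx} you need $\frac{2}{q}-O(\eta)>\frac{s^2}{2q^2}$, i.e.\ essentially $q>\frac{s^2}{4}$. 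Your closing claim that ``$q$ is much larger than $s^2$'' misreads the hypothesis: it only gives $q\ge \frac{100s^2}{\log s}$, and once $\log s>400$ the admissible value $q=\frac{100s^2}{\log s}$ satisfies $q<\frac{s^2}{4}$, where $\frac{s^2}{2q^2}\ge\frac{\log s}{200q}>\frac{2}{q}$ and no contradiction follows. Since the lemma is claimed for all sufficiently large $s$, and the window $\frac{100s^2}{\log s}\le q\ll s^2$ is exactly the regime the whole theorem is about (cf.\ Theorems~\ref{thm:counterexample} and~\ref{thm:main}), this case cannot be discarded: the fixed gain $\frac{2}{q}$ from shrinking the ground set by two colors is of smaller order than the $\Theta\lt(\frac{s^2}{q^2}\rt)$ slack between the Jensen bound and the balanced benchmark.

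The paper avoids any benchmark comparison here by a purely local exchange: it merges the two tiny singleton classes into a single class of size $2$, replacing $(\alpha_1,\alpha_{k-1},\alpha_k)$ by $\lt(\alpha_1-\zeta,\ \alpha_{k-1}+\alpha_k+\zeta\rt)$ with $\zeta$ chosen so that $\sum_i\alpha_i^2$ is unchanged. Since $\zeta\le\frac{4\alpha_k\alpha_{k-1}}{\alpha_1}$ and $\alpha_1\ge\frac{A_1}{4q}$, the loss $\zeta\log A_1\le\frac{2}{5}\alpha_k$ is strictly less than the gain $(\alpha_{k-1}+\alpha_k+\zeta)\log 2$, so the objective strictly increases no matter how small $\alpha_{k-1},\alpha_k$ are and for every $q\ge\frac{100s^2}{\log s}$; the improvement scales with $\alpha_{k-1}+\alpha_k$ rather than being a fixed $\frac{2}{q}$. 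If you wish to keep your deletion framework, you would need an upper bound on the $(q-2)$-color sub-problem that carries an extra factor of order $\log\lt(\frac{q}{s}\rt)$, as in \lemref{lem:small_alphak}; but that lemma need not apply to $\valpha'$ (its remaining parts may number exactly $\cei{s'}$, and their weights need not all exceed $\frac{1}{50(q-2)}$), so the local merge is the natural repair.
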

\begin{proof}
Suppose, towards contradiction, that $k \ge s+2$. We assume, without loss
of generality, that $\alpha_1\ge \ldots \ge \alpha_k$. By
\lemref{lem:small_two} we must have $\alpha_k,\alpha_{k-1} < \frac{1}{40q}$,
and $\alpha_1 \ge \frac{1}{k}\ge \frac{1}{q}\ge 4\alpha_{k-1}$.
Moreover, by \propref{prop:ratio_alpha_A}, we infer that $A_k = A_{k-1}=1$,
and $\alpha_1\ge \frac{A_1}{4q}$.
As $(\alpha_1 - \alpha_k - \alpha_{k-1})^2\ge 4\alpha_k \alpha_{k-1}$,
we may define $\zeta:= \frac{(\alpha_1 - \alpha_k - \alpha_{k-1}) - \sqrt{
(\alpha_1 - \alpha_k - \alpha_{k-1})^2 - 4\alpha_k \alpha_{k-1}}}{2}$
such that the following holds
\[
 (\alpha_1-\zeta)^2 + (\alpha_k + \alpha_{k-1}+\zeta)^2=
 \alpha_1^2 + \alpha_k^2 + \alpha_{k-1}^2.
\]
We claim that
$\alpha_1 \log A_1 < (\alpha_1 - \zeta) \log A_1 +
(\alpha_k + \alpha_{k-1} + \zeta) \log 2$. To see this,
first note that
\[
 \zeta = \frac{2\alpha_k\alpha_{k-1}}{
 (\alpha_1 - \alpha_k - \alpha_{k-1})+
 \sqrt{(\alpha_1 - \alpha_k - \alpha_{k-1})^2 - 4\alpha_k \alpha_{k-1}}}
  \le \frac{4\alpha_k\alpha_{k-1}}{\alpha_1} \le
  \frac{4\alpha_k\alpha_{k-1}\cdot 4q}{A_1}\le
  \frac{2}{5}\cdot\frac{\alpha_k}{A_1},
\]
hence $\zeta \log A_1 \le \frac{2}{5}\cdot\frac{\log A_1}{A_1}\cdot
\alpha_k\le \frac{2}{5}\alpha_k<(\alpha_k + \alpha_{k-1} + \zeta)\log 2$,
proving our claim. But this is a contradiction, because $\valpha':=
(\alpha_1-\zeta,\alpha_2,\ldots, \alpha_{k-2},\alpha_{k-1} +
\alpha_k + \zeta, A_1, A_2,\ldots, A_{k-2}, 2)$ is a feasible
solution to $\OPT(s)$ yielding a better objective value than
$\valpha$, therefore $k \le s+1$.
\end{proof}

Step (II) is a consequence of the following lemma.
\begin{lemma}

  \label{lem:objective}
  If $\valpha=(\alpha_1,\ldots,\alpha_{s+1}, A_1, \ldots, A_{s+1})$ is a
  solution to $\OPT(s)$ such that $A_{s+1} = 1$ then
  \begin{equation}
    \label{eqn:objective}
    \OBJ(\valpha) \le S_1 + \frac{S_2-S_1^2}{2S_1},
  \end{equation}
  where $S_1=\frac{1}{s}\sum_{i=1}^s \log A_i$ and $S_2=\frac{1}{s}
  \sum_{i=1}^s\log^2 A_i$. If, in addition, $\alpha_{s+1}\le \frac{1}{50q}$
  then $S_2 - S_1^2 \le \frac{S_1^2}{20q}$.

\end{lemma}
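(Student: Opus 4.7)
The plan is to apply \lemref{lem:fixed_Pk} with $k = s+1$. Because $\valpha$ is a solution to $\OPT(s)$ whose support forms a partition of $[q]$, it is in particular a local maximum for the restricted problem \eqref{opt:fixed_Qk} (with the auxiliary parameter $\beta = 0$ in that problem's notation) for the fixed sizes $A_1,\ldots,A_{s+1}$; since all coordinates of $\valpha$ are positive and $k = s+1 > s$, the hypotheses of \lemref{lem:fixed_Pk} are satisfied, so we obtain $\lambda'\alpha_i = \log A_i - \mu'$ for every $i$ together with the explicit formula $\OBJq{q}(\valpha) = \mu' + \lambda'/s$.

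Using $\log A_{s+1} = \log 1 = 0$, the auxiliary quantities from \lemref{lem:fixed_Pk} simplify to $S_1' = sS_1/(s+1)$ and $S_2' = sS_2/(s+1)$. A short algebraic computation with $V := S_2 - S_1^2$ then yields $S_2' - (S_1')^2 = s\bigl(S_1^2 + (s+1)V\bigr)/(s+1)^2$ and therefore $\lambda' = s\sqrt{S_1^2 + (s+1)V}$. Substituting into the objective-value formula of the lemma gives
\[
  \OBJq{q}(\valpha) = \frac{sS_1 + \sqrt{S_1^2 + (s+1)V}}{s+1}.
\]
The first inequality of the lemma then follows from the elementary bound $\sqrt{1+x} \le 1 + x/2$ applied with $x = (s+1)V/S_1^2$: this gives $\sqrt{S_1^2 + (s+1)V} \le S_1 + (s+1)V/(2S_1)$, and after dividing by $s+1$ we obtain $\OBJq{q}(\valpha) \le S_1 + V/(2S_1)$, as claimed in \eqref{eqn:objective}.

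For the second part, I would extract additional information from the identity corresponding to the coordinate $A_{s+1}$. Writing $\beta := \alpha_{s+1}$, the relation $\lambda'\beta = \log A_{s+1} - \mu' = -\mu'$ from \lemref{lem:fixed_Pk}, combined with $\mu' = S_1' - \lambda'/(s+1) = sS_1/(s+1) - \lambda'/(s+1)$, eliminates $\mu'$ and produces the clean identity $\lambda'\bigl(1 - \beta(s+1)\bigr) = sS_1$. Squaring and invoking $\lambda'^2 = s^2(S_1^2 + (s+1)V)$ allows us to solve for $V$ explicitly:
\[
  V = \frac{S_1^2\,\beta\,(2 - \beta(s+1))}{(1 - \beta(s+1))^2}.
\]

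Finally, the hypothesis $\beta \le 1/(50q)$ together with the global assumption $q \ge 100s^2/\log s$ forces $\beta(s+1)$ to be very small (easily less than $1/100$ once $s$ is large), so the numerator is bounded by $2\beta S_1^2$ while the denominator exceeds $0.99$; this yields $V \le 2.1\,\beta S_1^2 \le S_1^2/(20q)$ by a routine estimate. The argument is essentially bookkeeping: the main thing to check is that \lemref{lem:fixed_Pk} applies and that the algebra simplifies to the two stated formulas, after which both bounds reduce to elementary calculus; I do not anticipate any deeper obstacle.
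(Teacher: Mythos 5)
Your proof is correct and follows essentially the same route as the paper: apply \lemref{lem:fixed_Pk} with $k=s+1$, express $S_1',S_2'$ through $S_1,S_2$, bound the resulting square root via $\sqrt{a^2+b}\le a+\frac{b}{2a}$ for \eqref{eqn:objective}, and then use the coordinate relation for $\alpha_{s+1}$ (equivalently $\lambda'(1-(s+1)\alpha_{s+1})=sS_1$) for the variance bound. Your second part solves exactly for $S_2-S_1^2$ instead of chaining the paper's inequalities on $\lambda$, which is the same manipulation in slightly cleaner form; note also that the appeal to $q\ge 100s^2/\log s$ is unnecessary there, since $q\ge s+1$ already forces $(s+1)\alpha_{s+1}\le \frac{1}{50}$.
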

\begin{proof}
We fix $A_1,\ldots, A_{s+1}$ and recall \lemref{lem:fixed_Pk}. It states
that for this fixed sequence $A_1,\ldots, A_{s+1}$, we have
$\alpha_i = \frac{\log A_i - \mu}{\lambda}$ for $i=1,\ldots, s+1$
and $\OBJ(\valpha) = \mu + \frac{\lambda}{s}$, where
\[
  \mu = S_1' - \frac{\lambda}{s+1},\enspace
  \lambda = (s+1)\sqrt{s \lt(S_2' - (S_1')^2\rt)},\enspace
  S_1'=\frac{1}{s+1}\sum_{i=1}^{s+1} \log A_i,
  \enspace\text{and}\enspace
  S_2'=\frac{1}{s+1}\sum_{i=1}^{s+1} \log^2 A_i.
\]
We have $S_1'=\frac{s}{s+1}S_1$ and $S_2' =\frac{s}{s+1} S_2$, hence
\begin{equation}
  \label{eqn:lambda}
  \lambda = (s+1) \sqrt{\frac{s^2}{s+1}(S_2-S_1^2)+
  \frac{s^2}{(s+1)^2}S_1^2}.
\end{equation}
We estimate $\lambda$ in \eqref{eqn:lambda} using the inequality
$\sqrt{a^2 + b} \le a + \frac{b}{2a}$ for $a := \frac{s}{s+1}S_1$
and $b:=\frac{s^2}{s+1} (S_2-S_1^2)$ and obtain
\[
  \lambda \le s\cdot S_1 + \frac{s(s+1)}{2 S_1}\cdot(S_2-S_1^2).
\]
Using this bound for $\lambda$ in the formula $\OBJ(\valpha)=
\frac{s}{s+1}S_1 +\frac{\lambda}{s(s+1)}$ we obtain
\eqref{eqn:objective}. To finish the proof of the lemma, we need
to show that $S_2-(S_1)^2\le \frac{1}{20q}S_1^2$ whenever $\alpha_{s+1}
\le \frac{1}{50q}$. For that purpose, we use the identity $\alpha_{s+1}
= \frac{\log A_{s+1}-\mu}{\lambda}=\frac{\lambda-sS_1}{(s+1)\lambda}$,
which, together with the inequality $0\le \alpha_{s+1} \le \frac{1}{50q}$,
implies 
\[
  \lambda\le \frac{3sS_1}{2} \Text{~~and~~} 0 \le \lambda - s S_1 \le \frac{s(s+1)}{30q} S_1.
\]
Multiplying the above inequality by $\lambda+sS_1$, we obtain
$S_2-(S_1)^2\le \frac{1}{20q}S_1^2$, which completes the proof.
\end{proof}

With $S_1$ and $S_2$ as in the statement of \lemref{lem:objective}
we have step (III).

\begin{lemma}

  \label{lem:objective2}
  For $s$ large enough and $q\gg s$, the maximum of
  \[
    S_1 + \frac{S_2 - S_1^2}{2S_1},
  \]
  over all choices of $(A_1,\ldots, A_{s+1})$ satisfying both
  $A_{s+1} = 1$ and $S_2-S_1^2 \le \frac{S_1^2}{20q}$,
  is attained when $(A_1,\ldots, A_{s})$ forms a balanced partition
  of $q-1$.

\end{lemma}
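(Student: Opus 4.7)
The plan is to show that the objective $g := S_1 + (S_2 - S_1^2)/(2S_1)$ is strictly decreasing in the variance $V := S_2 - S_1^2$ along the constraint surface $\sum_{i=1}^s A_i = q-1$, in the small-variance regime enforced by the hypothesis $V \le S_1^2/(20q)$; once this monotonicity is established, it suffices to verify that balanced partitions minimize $V$. I parametrize $A_i = M(1+\epsilon_i)$ with $M := (q-1)/s$ and $\sum_i \epsilon_i = 0$; the hypothesis $V \le S_1^2/(20q)$, combined with $q \ge 100 s^2/\log s$, forces $\max_i |\epsilon_i| = o(1)$ as $s\to\infty$, so the Taylor expansion of $\log(1+\epsilon_i)$ is valid with negligible cubic remainder. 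Averaging yields
\[
S_1 = \log M - \frac{1}{2s}\sum_i \epsilon_i^2 + O\!\Bigl(\tfrac{1}{s}\sum_i |\epsilon_i|^3\Bigr),\qquad
V = \frac{1}{s}\sum_i \epsilon_i^2 + O\!\Bigl(\tfrac{1}{s}\sum_i |\epsilon_i|^3\Bigr),
\]
so $S_1 = \log M - V/2 + \text{(cubic correction)}$. Substituting into $g$ produces
\[
g = \log M - \frac{V}{2}\Bigl(1 - \tfrac{1}{S_1}\Bigr) + \text{(lower order)},
\]
and since $S_1 \approx \log M \ge \log(100s/\log s) \gg 1$ for $s$ large, the leading coefficient of $V$ is strictly negative, confirming that $g$ strictly decreases as $V$ grows.

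It remains to show that among positive-integer partitions of $q-1$ into $s$ parts, the balanced partition uniquely minimizes $V$. I prove this by a smoothing argument: for any pair with $A_i \ge A_j + 2$, the replacement $(A_i, A_j) \mapsto (A_i - 1, A_j + 1)$ strictly decreases $V$ (while incidentally increasing $S_1$). A short computation based on the fact that $(\log x)/x$ is decreasing for $x > e$ gives the desired sign of the change, and the admissibility constraints $A_j + 1 \ge 1$ and $A_i - 1 \ge 1$ are automatic since $A_i \ge 3$. Iterating the smoothing terminates precisely at the balanced partition of $q-1$, which is therefore the unique integer maximizer of $g$.

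The main obstacle is the careful handling of the Taylor error terms. The hypothesis controls only the mean of $\epsilon_i^2$, so obtaining a pointwise $\ell^\infty$ bound on the $\epsilon_i$'s requires noting that a single very large $|\epsilon_i|$ would by itself exceed the allowed variance budget, which gives $|\epsilon_i| \lesssim \sqrt{s/q}\cdot \log M = o(1)$ uniformly. A secondary subtlety is confirming that the balanced partition actually lies in the admissible domain $V \le S_1^2/(20q)$; a direct calculation gives $V = O(s^2/q^2)$ for the balanced partition, which is comfortably below $S_1^2/(20q)$ throughout the range $q \ge 100 s^2/\log s$, so the balanced partition is a legitimate competitor and achieves the maximum.
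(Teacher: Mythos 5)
Your two-step plan (expand to second order, argue the objective is decreasing in the variance $V=S_2-S_1^2$, then show balancedness minimizes $V$ by integer smoothing) is close in spirit to the paper's argument, but as written it has a genuine gap, concentrated exactly in the hard range $q=\Theta(s^2/\log s)$. The objective $g=S_1+\frac{V}{2S_1}$ is not a function of $V$ alone, so your ``monotonicity in $V$'' is only the expansion $g=\log M-\frac{V}{2}\bigl(1-\frac{1}{S_1}\bigr)+E$ with an unstructured error $|E|\lesssim \frac1s\sum_i|\epsilon_i|^3\le \max_i|\epsilon_i|\cdot\frac1s\sum_i\epsilon_i^2$. For the balanced partition and its nearest integer competitors (one pair moved by $\pm1$) this a priori error is of order $1/M^3$ with $M=(q-1)/s$, while the main-term separation between such configurations is only of order $\frac{1}{sM^2}$; the ratio is $\frac{s}{M}=\frac{s^2}{q-1}$, which is about $\frac{\log s}{100}\to\infty$ at the bottom of the range $q\ge 100 s^2/\log s$. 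So your expansion, with errors bounded configuration by configuration, cannot decide the comparison against nearly balanced competitors precisely where it matters (it would suffice only for $q\gg s^2$). The cubic corrections do cancel in such comparisons, but only because the two configurations differ in two coordinates by $\pm 1/M$ --- a difference-level cancellation your argument never invokes. This is exactly what the paper's proof supplies: it compares each unbalanced $(A_1,\dots,A_s)$ to its own smoothed neighbor and estimates the change of the full objective by the mean value theorem, $\widetilde S_2-S_2=\frac{2(A_l-A_m-1)}{s}\cdot\frac{\log(C/e)}{C^2}$ with $A_m\le C\le A_l$, obtaining a net gain of at least $\frac{A_l-A_m-1}{2sA_lA_m}$.

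Your step (b) also has a flaw in its justification. The fact that $(\log x)/x$ is decreasing for $x>e$ says that $\log^2 x$ is concave there, i.e.\ that the swap $(A_i,A_j)\mapsto(A_i-1,A_j+1)$ \emph{increases} $S_2$ --- the wrong direction; the sign of $\Delta V=\Delta S_2-\Delta(S_1^2)$ is a cancellation between two positive quantities and is regime-dependent. Indeed, as a statement about arbitrary positive-integer partitions it is false: for parts $(100,98,1)$ (so $s=3$) the swap to $(99,99,1)$ \emph{increases} $S_2-S_1^2$ (by about $3.6\times 10^{-5}$), because $\log 100-S_1>1$ there. The monotone-smoothing claim is true only when all $\log A_i$ lie within distance less than $1$ of $S_1$ --- which the constraint $V\le S_1^2/(20q)$ does guarantee --- and even then it needs a second-order estimate of the same precision as the paper's, not the one-line concavity remark. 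Your peripheral points (the $\ell^\infty$ bound $\max_i|\epsilon_i|=o(1)$ forced by the variance budget, and the admissibility of the balanced partition) are fine, but to complete the proof you should either smooth the objective directly as the paper does, or redo step (a) as a comparison between neighboring configurations so that the cubic errors cancel.
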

\begin{proof}
First we claim that under the conditions of the lemma, we have
$A_i = (1 + o(1)) \frac{q}{s}$ for all $i=1,\ldots, s$, and so
$S_1=(1+o(1))\log \frac{q}{s}$, where the asymptotic notation symbol
$o(1)$ represents a function that tends to zero as $s$ tends to
infinity. This is because $S_1 \le \log \frac{q}{s}$
(by the concavity of $\log$) and the variance $S_2 -S_1^2$ can be
bounded below by
$\frac{1}{4s}(\log A_i - \log A_j)^2$ for any $i\ne j$, and thus
\[
  (\log A_i  -\log A_j) ^2 \le 4s\cdot\frac{S_1^2}{20q} = O\lt(
  \frac{\log^2 (q/s)}{q/s}\rt) = o(1).
\]

Suppose, towards contradiction, that the lemma is false, and let
$1\le l, m \le s$ be such that $A_{l} \ge A_{m} + 2$. Let
$\widetilde A_l:=A_l-1$, $\widetilde A_m:=A_m+1$, and $\widetilde A_i:=A_i$
for all $i\not\in \{l,m\}$.
Moreover, let $\widetilde S_1 = \frac{1}{s} \sum_{i=1}^s \log \widetilde A_i$
and $\widetilde S_2 = \frac{1}{s}\sum_{i=1}^s \log^2 \widetilde A_i$. To
arrive at a contradiction, it suffices to show that
$\widetilde S_1 +\frac{\widetilde S_2-\widetilde S_1^2}{2\widetilde S_1}
>S_1+\frac{S_2-S_1^2}{2S_1}$.
Since $\log(1+x)=x + O(x^2)$, we have
\begin{equation}
  \label{eqn:ineq_S1}
  \widetilde S_1 -S_1 = \frac{1}{s} \log\lt(1+\frac{A_l-A_m-1}{A_lA_m}\rt)
  = \lt(1+o(1)\rt)\cdot \frac{A_l-A_m-1}{sA_lA_m}.
\end{equation}
Moreover, if $f(x):= \log^2(x+1) - \log^2 x$, we have
$\widetilde S_2 -S_2 = \lt(f(A_m) - f(A_l-1)\rt) / s$. By the mean value
theorem, there exists $B$ such that $A_m \le B \le A_l - 1$ such
that $\widetilde S_2-S_2 =\frac{A_m-A_l+1}{s} f'(B)$. However, since
$f'(B) = 2\cdot\lt[\frac{\log(B+1)}{B+1}-\frac{\log B}{B}\rt]$,
another application of the mean value theorem yields $f'(B)=
-2\cdot \frac{\log(C/e)}{C^2}$ for some $B \le C \le B+1$, hence
$A_m\le C\le A_l$. These two identities imply
\begin{equation}
  \label{eqn:ineq_S2}
  \widetilde S_2 -S_2=2\cdot\frac{A_l-A_m-1}{s} \cdot \frac{\log(C/e)}{C^2}
  \ge \frac{3}{2}\cdot\frac{(A_l-A_m-1)\log \frac{q}{s}}{sA_lA_m}.
\end{equation}
Combining \eqref{eqn:ineq_S1} and \eqref{eqn:ineq_S2} yields
\[
  \widetilde S_1+\frac{\widetilde S_2 -\widetilde S_1^2}{2\widetilde S_1}
  \ge S_1 + \frac{S_2 - S_1^2}{2S_1}+\frac{(A_l-A_m-1)}{2sA_lA_m},
\]
which is a contradiction, thereby proving our claim that
$|A_l - A_m| \le 1$ for all $1\le l,m \le s$.
\end{proof}

We are ready to prove step (IV) and hence \thmref{thm:Turan_OPT}.

\begin{proof}[Proof of \thmref{thm:Turan_OPT}]
As mentioned in the beginning of this section, in order to prove
\thmref{thm:Turan_OPT} it suffices to show that for any solution
$\valpha$ of $\OPT(s)$, the support graph $\SUPP(\valpha)$ lies
in $\mc{P}_s$. On the other hand, we already know (by
\lemref{lem:k_or_kplus1}) that $\SUPP(\valpha)\in \mc{P}_s\cup
\mc{P}_{s+1}$. Suppose, towards contradiction, that $\SUPP(\valpha)
\in \mc{P}_{s+1}$, where $\valpha=(\alpha_1,\ldots,\alpha_{s+1},
A_1,\ldots, A_{s+1})$. By \lemref{lem:small_two}, we know that
$\alpha_{s+1} \le \frac{1}{50q}$ and as a consequence of
\propref{prop:ratio_alpha_A} we obtain $A_{s+1} = 1$.

From \lemref{lem:objective} we obtain
\[
  \OBJ(\valpha) \le S_1 + \frac{S_2 - S_1^2}{2S_1} \Text{~~and~~} S_2-S_1^2\le \frac{S_1^2}{20q},
\]
where $S_1= \frac{1}{s}\sum_{i=1}^s \log A_i$ and $S_2=\frac{1}{s}
\sum_{i=1}^2 \log^2 A_i$. By \lemref{lem:objective2}, we infer that
that the maximum of $S_1 + \frac{S_2 - S_1^2}{2S_1}$ is attained
when $(A_1,\ldots, A_s)$ is a balanced partition of $q-1$. 
For convenience of the remaining proof, we may assume that $(A_1,\ldots, A_s)$ is a balanced partition of $q-1$.
Then we have $|A_i-A_j| \le 1$ for all $1\le i, j\le s$ and thus $|A_i - \exp(S_1)|\le 1$. This
implies that $2S_1\ge \log \frac{q}{s}$ and $\log A_i - S_1 \le \frac{2s}{q}$, 
the latter one of which in turn implies that
\[
  S_2 - S_1^2 \le \frac{4s^2}{q^2}.
\]
Let $S^*$ be the value of the objective function on the $s$-balanced
vector for $q$. We have
\[
  S^* \ge S_1 + \frac{1}{s}\log\lt(1+\frac{1}{A_1}\rt)
  \ge S_1 +\frac{1}{2q},
\]
but
\begin{align*}
  S^*\le \OBJ(\valpha) \le S_1
  + \frac{S_2 - S_1^2}{2S_1} \le S_1+\frac{4s^2}{q^2\log\frac{q}{s}}
  \le  S_1 + \frac{1}{10q},
\end{align*}
and this final contradiction finishes the proof.
\end{proof}

\section{Concluding remarks and open problems}
\label{sec:conclusion}
In \thmref{thm:structure} we show that for any real $s>1$ and integer $q$,
the support graph of every solution to $\OPT(s)$ is in either
$\cup_{\cei{s}\le k\le q} \mc{P}_k$ or $\mc{Q}_{\cei{s}}$. We point out
that the family $\mc{Q}_{\cei{s}}$ can not be further reduced.
This can be demonstrated by the results from \cite{LPS}. The authors
of \cite{LPS} proved that when the edge density is smaller than $1/(q\log q)$
(so the corresponding $s$ is less than 2), the extremal graphs which maximize
the number of $q$-colorings are some complete bipartite graphs
plus isolated vertices, which corresponds to the family $\mc{Q}_2$.

One of the reasons that we were able to solve $\OPT(s)$ for integers $s$
in \secref{sec:opt_integer_s} is that the family $\mc{Q}_\cei{s}$ vanishes
when $s$ is integer.
In general, it remains difficult to solve the $\OPT(s)$ for every $q$.
However, we wonder if the following statement is true for any real $s>1$:
there exists a function $q(s)$ such that for any integer $q\ge q(s)$,
every extremal graph with sufficiently large $n$ vertices and
$m=\frac{s-1}{2s}n^2$ edges maximizing the number of $q$-colorings is
$o(n^2)$-close to (or even is) a complete $\cei{s}$-partite graph.
Equivalently, it says that under the same conditions, every solution to
$\OPT(s)$ is in $\mc{P}_{\cei{s}}$. \thmref{thm:main} shows that this
holds for all large integers $s$.

We can improve \thmref{thm:main} to that every extremal graph is
$O_{s,q}(n)$-close to the Tur\'an graph $T_s(n)$, where the dependence
in $O$ is relative to $s$ and $q$. The proof requires lengthy and tedious
stability arguments and we decide to not include here. The problem of
pursuing the exact structure of the extremal graphs in fact can be reduced
to a universal maximum bound on $P_G(q)$ for all sparse graphs $G$ and
general $q$, which we explain as follows. Let $G$ be a graph with $n$
vertices and $m=o(n^2)$ edges. A result from \cite{LPS} asserts that when
$q$ is fixed and $m$ is sufficiently large (so is $n$), the maximum of
$P_G(q)$ is  $q^n\cdot e^{\lt((-c+o(1))\sqrt{m}\rt)}$,
where $c=2\sqrt{\log\frac{q}{q-1}\cdot \log q}$. And when $n,m$ are fixed
and $q$ is sufficiently large, it is not hard to see that the maximum of
$P_G(q)$ is at most $q^n\cdot\lt(1-\frac{1}{q}\rt)^{c'm}$ for some absolute
constanct $c'$. However it is not known if the following universal upper
bound $P_G(q)\le \max\lt\{q^n\cdot e^{\lt((-c+o(1))\sqrt{m}\rt)}, ~
q^n\cdot\lt(1-\frac{1}{q}\rt)^{c'm}\rt\}$
or similar holds for all such sparse graphs $G$ and for general $m,n,q$.
If the answer is yes, this would lead to the exact structure of the extremal
graphs, which are the Tur\'an graphs. Otherwise, there exists a sparse $G$
with a larger number of $q$-colorings in some range of $q$; adding this $G$
to certain $s$-partite graph will likely give a counterexample to
\conjref{conj:Lazebnik} in that range of $q$.

Another related question, raised in \cite{MT} (also see \cite{Lin}),
was asked to find the maximum number of acyclic orientations, that is
the value of $|P_G(-1)|$, over graphs $G$ with $n$ vertices and $m$ edges.
An upper bound was obtained in \cite{Lin} that it is at most
the product of $\max\{2,d(x)\}$ over all vertices $x$, where
$d(x)$ denotes the degree of $x$. It will be also interesting
to find the extremal graphs in this context.

We also feel that the problem we study, maximizing the number of proper
$q$-colorings over graphs with fixed number of vertices and edges, shares
certain similarity with the result of Reiher \cite{Reiher} which finds the
minimum number of cliques $K_q$ over the same family of graphs. An evidence
is that the solutions to the continuous relaxation \eqref{opt:relaxed} are
very similar to the extremal graphs Reiher found.

In \cite{LPS}, Loh {\it et al.} remarked that ``the natural next step
would be to extend the result to the range $\frac{m}{n^2}\le \frac{1}{4}$''
for general $q$. That is the case $1<s\le 2$. We will address this in a
forthcoming paper.

\medskip

\noindent\textbf{Acknowledgement.} The authors wish to thank Benny Sudakov for suggesting 
the problem to them, and Shagnik Das for invaluable discussions and specially for pointing out that the
counterexamples can still be found in a wider range of $q$ than what was originally pursued.

\appendix

\section{Stability of the main optimization problem}
\label{sec:stability}

In this section, we prove \corref{cor:stability-LPS}.

\begingroup
\def\thetheorem{\ref{cor:stability-LPS}}
\begin{corollary}

  For any real $s>1$, the following holds for all sufficiently large $n$.
  Let $G$ be an $n$-vertex graph with $m=\frac{s-1}{2s}n^2 + o(n^2)$ edges
  which maximizes the number of $q$-colorings. Then $G$ is $o(n^2)$-close
  to $G_{\valpha}(n)$ for some $\valpha$ which solves $\OPT(s)$.

\end{corollary}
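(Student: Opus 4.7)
The plan is to bootstrap \thmref{thm:stability-LPS} by a compactness-and-continuity argument. Interpret the hypothesis as applying to a sequence of extremal graphs $G_n$ with $m_n=\frac{s-1}{2s}n^2+o(n^2)$ edges. Fix any $\kappa>0$. Since $m_n\le \frac{(s+\kappa)-1}{2(s+\kappa)}n^2$ for all sufficiently large $n$, one may choose a tolerance $\eta_n\downarrow 0$ slowly enough so that \thmref{thm:stability-LPS} applies with parameters $(\eta_n,s+\kappa)$, yielding for each large $n$ a vector $\valpha_n'$ solving $\OPT(s_n')$ with $s_n'\le s+\kappa$ and $\lt|\frac{s_n'-1}{2s_n'}-\frac{m_n}{n^2}\rt|<\eta_n$, such that $G_n$ is $\eta_n n^2$-close to $G_{\valpha_n'}(n)$. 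In particular, $s_n'\to s$.

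The next step is to replace $\valpha_n'$ (a solution of $\OPT(s_n')$) by an honest solution of $\OPT(s)$. Let $\mc{S}\subseteq \FEAS(s)$ denote the compact set of maximizers of $\OBJ$ over $\FEAS(s)$, and pick $\valpha_n\in \mc{S}$ minimizing $\|\valpha_n-\valpha_n'\|_1$. I claim $\|\valpha_n-\valpha_n'\|_1\to 0$. Otherwise, along some subsequence this norm would exceed some $\eta>0$; all coordinates of $\valpha_n'$ lie in $[0,1]$, so by compactness a further subsequence $\valpha_{n_k}'$ converges to a limit $\valpha^*$. Continuity of $\V$ and $\E$ combined with $s_{n_k}'\to s$ gives $\valpha^*\in\FEAS(s)$, and continuity of $\OBJ$ together with continuity of $\OPT$ in $s$ (see \cite[Claim 5, p.661]{LPS}) gives $\OBJ(\valpha^*)=\lim \OPT(s_{n_k}')=\OPT(s)$, so $\valpha^*\in \mc{S}$. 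But then $\|\valpha_{n_k}-\valpha_{n_k}'\|_1\le \|\valpha^*-\valpha_{n_k}'\|_1\to 0$, a contradiction.

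Finally, I would verify that the construction $\valpha\mapsto G_{\valpha}(n)$ is continuous in edit distance: if $\|\valpha-\valpha'\|_1=\delta$, then the cluster-size vectors of $G_\valpha(n)$ and $G_{\valpha'}(n)$ differ by at most $\delta n+O(2^q)$ in $\ell^1$, and reshuffling $O(\delta n)$ vertices between clusters modifies at most $O(\delta n^2)$ edges. Applied with $\valpha=\valpha_n$ and $\valpha'=\valpha_n'$ and combined with Step 1 via the triangle inequality for edit distance, this yields that $G_n$ is $o(n^2)$-close to $G_{\valpha_n}(n)$, as claimed. The main obstacle is Step 2 --- specifically, the use of continuity of $\OPT$ in the parameter $s$ to pass from solutions of $\OPT(s_n')$ to solutions of $\OPT(s)$ --- but this continuity is already worked out in \cite{LPS}, so the argument goes through cleanly.
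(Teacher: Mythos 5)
Your proposal is correct and is essentially the paper's own argument: both bootstrap \thmref{thm:stability-LPS} at a slightly enlarged parameter with a vanishing tolerance, then use compactness of $\FEAS(s)$ together with continuity of $\V$, $\E$, $\OBJ$ and of $\OPT$ (Claim 5 of \cite{LPS}) to trade the solutions of $\OPT(s_n')$ for a nearby solution of $\OPT(s)$, and conclude via continuity of $\valpha\mapsto G_{\valpha}(n)$ in edit distance. The only difference is organizational --- the paper runs a contradiction argument with a fixed $\eps$ and a convergent subsequence of the $\valpha_t$, whereas you project each $\valpha_n'$ onto the solution set directly --- but the underlying compactness-and-continuity mechanism is the same.
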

\addtocounter{theorem}{-1}
\endgroup
\begin{proof}
Suppose, towards contradiction, that the corollary is false.
That means that there exist $\eps>0$ and a sequence of graphs
$\{G_t\}_{t=1}^\infty$ such that
\begin{itemize}
\item $G_t$ has $n_t$ vertices and $m_t$ edges;
\item $\lim_{t\to\infty} n_t=\infty$ and $m_t=\frac{s-1}{2s}n_t^2+o_t(n_t^2)$;
\item $G_t$ maximizes the number of $q$-colorings among all graphs with
the same number of vertices and edges;
\item $G_t$ is not $\eps n_t^2$-close to $G_{\valpha}(n_t)$ for
any solution $\valpha$ of $\OPT(s)$.
\end{itemize}
We may assume, by possibly passing to a subsequence of
$\{G_t\}_{t=1}^{\infty}$, that for all $t\ge 1$ we have
\[
 m_t \le \frac{(s+1)-1}{2(s+1)}n_t^2
\]
and that $n_t$ is large enough so that we can apply
\thmref{thm:stability-LPS} to the graph $G_t$ with $s$ replaced by
$s+1$ and with $\eps$ replaced by $\frac{\eps}{2t}$. This implies that
there exists $s_t$ and $\valpha_t$ such that the following holds:
\begin{itemize}
\item $\lt|\frac{s_t-1}{2s_t}-\frac{m_t}{n_t^2}\rt| < \frac{\eps}{2t}$
 and $s_t \le s+1$;
\item $\valpha_t$ is a solution of $\OPT(s_t)$;
\item $G_t$ is $\frac{\eps}{2t}n_t^2$-close to $G_{\valpha_t}(n_t)$;
\end{itemize}
Note that in this case we have $\lim_{t\to\infty} \lt|\frac{s_t-1}{2s_t}
-\frac{m_t}{n_t^2}\rt| = 0$, which implies $\lim_{t\to\infty} s_t = s$.

Again, by possibly passing to a subsequence, we may assume that
the sequence $\valpha_t$ (which lives in a compact space) converges
to some $\valpha$. By the continuity of $\V$ and $\E$ we have
\[
  \V(\valpha)=\lim_{t\to\infty} \V(\valpha_t) = 1\qquad\text{and}\qquad
  \E(\valpha)=\lim_{t\to\infty} \E(\valpha_t) = \lim_{t\to\infty}
  \frac{s_t-1}{2s_t} = \frac{s-1}{2s},
\]
hence $\valpha\in \FEAS(s)$. Furthermore, by the continuity of
$\OPT$ and $\OBJ$, we have
\[
  \OPT(s) = \lim_{t\to\infty} \OPT(s_t) = \lim_{t\to\infty} \OBJ(\valpha_t)
  = \OBJ(\lim_{t\to\infty} \valpha_t) = \OBJ(\valpha),
\]
hence $\valpha$ is a solution to $\OPT(s)$. Lastly, since $\valpha_t\to
\valpha$, for $t$ sufficiently large we have that $G_{\valpha}(n_t)$
and $G_{\valpha_t}(n_t)$ are $\frac{\eps}{2} n_t^2$-close.
But because $G_t$ is $\frac{\eps}{2t}n_t^2$-close to $G_{\valpha_t}(n_t)$,
we have that $G_t$ is $\eps n_t^2$-close to $G_{\valpha}(n_t)$, a contradiction.
This concludes the proof of the corollary.
\end{proof}

\end{document}